\pgfplotsset{compat=1.15}
\DeclareFontFamily{U}{tipa}{}
\DeclareFontShape{U}{tipa}{m}{n}{<->tipa10}{}
\newcommand{\arc@char}{{\usefont{U}{tipa}{m}{n}\symbol{62}}}%
\newcommand{\arc}[1]{\mathpalette\arc@arc{#1}}
\newcommand{\arc@arc}[2]{%
  \sbox0{$\m@th#1#2$}%
  \vbox{
    \hbox{\resizebox{\wd0}{\height}{\arc@char}}
    \nointerlineskip
    \box0
  }%
}
\let\pa=\partial
\let\al=\alpha
\let\d=\delta
\let\lam=\lambda
\let\f=\frac
\let\G= \Gamma
\let\Om=\Omega
\let\e=\varepsilon
\let\pa=\partial
\let\ri=\rightarrow
\let\na=\nabla
\newcommand{\beq}{\begin{equation}}
\newcommand{\eeq}{\end{equation}}
\newcommand{\beqo}{\begin{equation*}}
\newcommand{\eeqo}{\end{equation*}}
\newcommand{\ben}{\begin{eqnarray}}
\newcommand{\een}{\end{eqnarray}}
\newcommand{\beno}{\begin{eqnarray*}}
\newcommand{\eeno}{\end{eqnarray*}}
\newtheorem{theorem}{Theorem}[section]
\newtheorem{lemma}[theorem]{Lemma}
\newtheorem{proposition}[theorem]{Proposition}
\newtheorem{corol}[theorem]{Corollary}
\theoremstyle{remark}
\newtheorem{rmk}{Remark}[section]
\newcommand{\dist}{\mathrm{dist}}
\newcommand{\BR}{\mathbb{R}}
\newcommand{\ch}{\mathcal{H}^1}
\newcommand{\mres}{\mathbin{\vrule height 1.6ex depth 0pt width 0.13ex\vrule height 0.13ex depth 0pt width 1.1ex}}
\begin{document}

\title[Rigidity of triple junction]{Rigidity results for a triple junction solution of Allen-Cahn system}

\author{Zhiyuan Geng}
\address{Department of Mathematics, Purdue University, 150 N. University Street, West Lafayette, IN 47907–2067}
\email{geng42@purdue.edu}

\begin{abstract} For the two dimensional Allen-Cahn system with a triple-well potential, previous results established the existence of a minimizing solution $u:\mathbb{R}^2\rightarrow\mathbb{R}^2$ with a triple junction structure at infinity. We show that along each of three sharp interfaces, $u$ is asymptotically invariant in the direction of the interface and can be well-approximated by the 1D heteroclinic connections between two phases. Consequently, the diffuse interface is located in an $O(1)$ neighborhood of the sharp interface, and becomes nearly flat at infinity. This generalizes all the results for the triple junction solution with symmetry hypotheses to the non-symmetric case. The proof relies on refined sharp energy lower and upper bounds, alongside a precise estimate of the diffuse interface location. 
\end{abstract}

\keywords{Allen-Cahn system, diffuse interface, triple junction solution, heteroclinic connection, De Giorgi conjecture}

\maketitle

\section{Introduction}

In this paper, we investigate the bounded entire solution of the system 
\begin{equation}\label{eq: AC system}
    \Delta u-W_u(u)=0,\quad u:\BR^2\ri \BR^2,
\end{equation}
which is minimizing on compact sets the associated Allen-Cahn energy:
\begin{equation*}
    E(u,\Omega):=\int_{\Omega} \left(\f12|\na u|^2+W(u)\right)\,dx.
\end{equation*}

Specifically for $W$ we assume
\vspace{3mm}
\begin{enumerate}
    \item[(H1).]  $W\in C^2(\BR^2;[0,+\infty))$, $\{z: \,W(z)=0\}=\{a_1,a_2,a_3\}$, $W_u(u)\cdot u >0$ if $|u|>M$ and 
    \beqo
     c_2|\xi^2| \geq \xi^TW_{uu}(a_i)\xi\geq  c_1|\xi|^2,\; i=1,2,3.
    \eeqo 
    for some positive constants $c_1<c_2$ depending on $W$. 
\item[(H2).] For $i\neq j$, $i,j\in \{1,2,3\}$, there exists a unique (up to translation) minimizing heteroclinic connection $U_{ij}\in W^{1,2}(\BR,\BR^2)$ that minimizes the one dimensional energy functional
\beqo
J(U):=\int_{\BR}\left(\f12|U'|^2+W(U)\right)\,d\eta, \quad \lim\limits_{\eta\ri-\infty}U(\eta)=a_i,\ \lim\limits_{\eta\ri+\infty}U(\eta)=a_j.
\eeqo
The connection $U_{ij}$ is non-degenerate in the sense that $0$ is a simple eigenvalue of the operator $\mathcal{T}: W^{2,2}(\BR,\BR^2)\ri L^2(\BR,\BR^2)$:
\begin{equation}\label{def: 1D EL operator}
    \mathcal{T}\varphi:=-\varphi''+W_{uu}(U_{ij})\varphi.
\end{equation}
Let $\sigma_{ij}$ denote the minimal energy $J(U_{ij})$. Assume
\begin{equation}\label{all equal sigma}
    \sigma_{ij}\equiv \sigma,\quad \forall i\neq j\in\{1,2,3\}. 
\end{equation}
\end{enumerate}

The minimizing solution we seek for is a diffuse analogue of the planar minimizing partition. Let $\Omega$ (could be the whole $\BR^2$) denote a two dimensional domain.  A partition $\mathcal{P}=\{P_1,P_2,P_3\}$ of $\Omega$ is called a locally minimizing partition of $\Omega$, if for any compact set $K\subset \Omega$,
\begin{equation*}
E_0(\{P_1,P_2,P_3\},K)\leq E_0(\{A_1,A_2,A_3\},K),
\end{equation*}
where $E_0(\cdot)$ denotes the functional
\begin{equation}\label{functional: min par}
        E_0(\{D_1,D_2,D_3\},K):=\sum\limits_{1\leq i<j\leq 3} \sigma \ch(\pa^* D_i\cap \pa^* D_j\cap K),
\end{equation}
and $\mathcal{A}=\{A_i\}_{i=1}^3$ is any 3--partition of $\Om$ such that  $\mathcal{P}\mres(\Om\setminus K)= \mathcal{A}\mres(\Om\setminus K).$ Here $\pa^*$ refers to the reduced boundary of a set of finite perimeter. Analogously, for any function $v\in \mathrm{BV}_{loc}(\BR^2,\{a_1,a_2,a_3\})$, we can define the energy functional
\begin{equation}\label{functional: min par map}
    E^*_0(v,K):= E_0(\{v^{-1}(a_i)\}_{i=1}^3, K).
\end{equation}
$v$ is called a minimizing partition map on $\Omega$ if $v$ minimizes $E_0^*(\cdot, K)$ for any $K\Subset \Om$. 

In particular, when $\Omega=\BR^2$, we consider the following minimizing 3-partition of $\BR^2$,
\beq\label{partition p}
\begin{split}
&\qquad\qquad\qquad \mathcal{P}:=\{\mathcal{D}_1,\mathcal{D}_2,\mathcal{D}_3\}, \\
&\mathcal{D}_i:=\{(r\cos\theta,r\sin\theta) : r\in(0,\infty), \theta\in (\f{2(i-1)\pi}{3},\f{2i\pi}{3})\},\ i=1,2,3,
\end{split}
\eeq
which is a partition of the plane into three sectors centered at the origin with opening angles of $\f{2\pi}{3}$. The sharp interface that separates these sub-domains is denoted by 
\begin{equation*}
  \pa\mathcal{P}:=\pa \mathcal{D}_1\cup\pa \mathcal{D}_2\cup\pa \mathcal{D}_3.  
\end{equation*}

The \emph{triple junction map} is defined by 
\beq\label{form of triple junction sol}
u_{\mathcal{P}}:= a_1\chi_{\mathcal{D}_1}+ a_2\chi_{\mathcal{D}_2}+a_3\chi_{\mathcal{D}_3},
\eeq
where $\chi_{\Om}$ represents the characteristic function of domain $\Om$. The minimality of $\mathcal{P}$ and $u_{\mathcal{P}}$ defined above is related to the Steiner point of triangle in classical Euclidean geometry. 
 


    

Here is our main result.

\begin{theorem}\label{thm: rigidity triple junction} Let $u: \BR^2\ri \BR^2$ be a minimizing entire solution of \eqref{eq: AC system} with a triple junction structure at infinity, i.e. 
\begin{equation}\label{L1 conv condition main thm}
u(Rz)\xrightarrow[]{L_{loc}^1} u_{\mathcal{P}}\ \text{ as }R\ri\infty.
\end{equation}
For any $i,j \in \{1,2,3\}$, let $\mathbf{e}_{ij}$ denote the unit vector representing the direction of $\pa \mathcal{D}_i\cap\pa\mathcal{D}_j$ and $\mathbf{e}_{ij}^{\perp}$ as its orthogonal unit vector. Then there exists a constant $h_{ij}$, such that 
\begin{equation}\label{main result: conv}
\lim\limits_{x\ri+\infty}\|u(x\mathbf{e}_{ij}+y\mathbf{e}_{ij}^{\perp})-U_{ij}(y-h_{ij})\|_{C^{2,\alpha}(\BR;\BR^2)}=0, \quad \forall \al\in(0,1).
\end{equation}
Furthermore, the following  pointwise estimate holds:
\begin{equation}\label{main result: decay}
    |u(z)-a_i|\leq Ce^{-k\,\dist(z,\pa\mathcal{P})}, \quad \text{for }z\in \mathcal{D}_i,
\end{equation}
where the positive constants $C$, $k$ depend only on $W$ and $u$.
\end{theorem}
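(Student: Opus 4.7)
The plan is to combine sharp energy bounds with a precise localization of the diffuse interface, and then exploit the minimality of $u$ together with the non-degeneracy hypothesis (H2) to identify the transverse profile with the heteroclinic connection $U_{ij}$ and to deduce the exponential decay.

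First I would establish matched energy bounds on large balls of the form
\[
E(u, B_R) = 3\sigma R + O(1).
\]
The upper bound comes from an explicit competitor: along each ray of $\partial \mathcal{P}$ paste a translate $U_{ij}(y - h_{ij})$ of the 1D connection, interpolate near the origin by a fixed triple-junction profile, and use the minimality of $u$ to transfer the bound to $u$ itself. The matching lower bound is more delicate: the $L^1_{loc}$ convergence in \eqref{L1 conv condition main thm} together with lower semicontinuity already gives the leading term $3\sigma R - o(R)$, and a slicing argument along lines transverse to each $\mathbf{e}_{ij}$ (each such 1D slice carrying energy $\geq \sigma$ by (H2)) is then used to sharpen the error to $O(1)$.

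Second, I would use this $O(1)$ excess to confine the diffuse interface in an $O(1)$ tube around $\partial \mathcal{P}$. For each $i$ and small $\delta > 0$, I would show there exists $M_0$ such that whenever $z \in \mathcal{D}_i$ with $\dist(z, \partial \mathcal{P}) \geq M_0$ one has $|u(z) - a_i| \leq \delta$. The argument is by energy accounting: failure of smallness at $z$ would force an amount $\geq \eta > 0$ of energy on a fixed-size ball around $z$, and accumulating too many such bad points along a ray would violate the sharp upper bound. Once smallness is in hand, the coercivity $\xi^T W_{uu}(a_i)\xi \geq c_1 |\xi|^2$ from (H1) makes $\Delta - W_{uu}(a_i)$ uniformly coercive, and a standard barrier/maximum-principle argument, using $\dist(z,\partial\mathcal{P})$ as the natural decay variable, yields the exponential decay \eqref{main result: decay}.

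Third, to prove \eqref{main result: conv}, fix an interface direction $\mathbf{e}_{ij}$ and study the translated slices $u_n(y) := u(x_n \mathbf{e}_{ij} + y \mathbf{e}_{ij}^\perp)$ for $x_n \to +\infty$. The exponential decay gives uniform asymptotics $a_i$ at $-\infty$ and $a_j$ at $+\infty$, and elliptic regularity for \eqref{eq: AC system} gives uniform $C^{2,\alpha}$ bounds on the $u_n$. Any subsequential limit $U_\infty$ is a 1D heteroclinic from $a_i$ to $a_j$ whose energy equals $\sigma$ (combining the sharp upper bound applied to a long thin rectangle with $J(U) \geq \sigma$ from (H2)), hence $U_\infty = U_{ij}(\,\cdot - h)$ for some $h$ by uniqueness up to translation. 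To pin the translation down to a unique $h_{ij}$ independent of the subsequence, I would perform a modulational decomposition
\[
u(x\mathbf{e}_{ij} + y \mathbf{e}_{ij}^\perp) = U_{ij}(y - h(x)) + v(x,y),
\]
with $v(x,\cdot)$ orthogonal in $L^2$ to the translation mode $U_{ij}'(\cdot - h(x))$. The non-degeneracy of $\mathcal{T}$ from (H2) yields a spectral gap on this orthogonal complement, and projecting \eqref{eq: AC system} onto the kernel and its complement produces an ODE for $h(x)$ coupled to a coercive equation for $v$; this system forces $h(x)$ to have a finite limit $h_{ij}$ and $v(x,\cdot)$ to vanish in $C^{2,\alpha}$ as $x \to +\infty$.

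The step I expect to be hardest is obtaining the $O(1)$ sharpness, rather than $o(R)$, of the lower energy bound: the usual blow-down/$\Gamma$-convergence machinery only supplies $o(R)$ error, which is too weak to produce an $O(1)$ interface tube. Achieving the $O(1)$ sharpness is what unlocks the interface localization, and in turn the strong $C^{2,\alpha}$ convergence; it will require carefully balancing slicing estimates against minimality tests using competitors supported in thin slabs crossing a single ray of $\partial \mathcal{P}$, while keeping a tight enough grip on the contribution from the triple-point region near the origin.
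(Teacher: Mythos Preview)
Your outline has the right ingredients at a high level (sharp energy bounds, interface localization, modulational decomposition), but the logical order is inverted at a crucial point and one key estimate is missing entirely. You propose to pass directly from the $O(1)$ energy excess to an $O(1)$ tube for the diffuse interface, and then use that tube to run the modulational analysis. In the paper the implication goes the other way: the sharp bound $E(u,B_R)=3\sigma R+O(1)$ is obtained only \emph{after} one already has an $O(R^{\beta})$ localization (inherited from prior work), and even then it only upgrades the tube to width $O(R^{1/2})$, not $O(1)$. Your density argument in step~2 (``too many bad points along a ray contradict the upper bound'') would at best bound the \emph{measure} of the bad set by $O(1)$, not its distance to $\partial\mathcal{P}$; nothing prevents the interface from drifting by $R^{1/2}$ while costing only $O(1)$ extra energy. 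The $O(1)$ tube, and hence \eqref{main result: decay}, is in fact derived only at the very end, as a corollary of the $C^{2,\alpha}$ convergence \eqref{main result: conv}.

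The second, more serious, gap is in step~3: you assert that the projected system ``forces $h(x)$ to have a finite limit'', but the mechanism for this is absent from your plan. The paper's argument hinges on two ingredients you do not mention. First, a global bound $\int_{\{x>0\}}|\partial_x u|^2\,dx\,dy<\infty$, obtained by a delicate comparison on equilateral triangles $S_R$ (not balls) adapted to the tripod geometry; this is where the $O(R^{1/2})$ tube is actually used. Second, the Hamiltonian identity $\int_{\mathbb{R}}\partial_x u\cdot\partial_y u\,dy\equiv 0$, which allows one to replace $U'(\cdot-h(x))$ by $U'(\cdot-h(x))-\partial_y u$ in the numerator of the formula for $h'(x)$ and thereby obtain $|h'(x)|\le C\|\partial_x u(x,\cdot)\|_0^2$; without this cancellation the naive bound is $|h'(x)|\le C\|\partial_x u(x,\cdot)\|_0$, which is not integrable. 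Together these give $\int|h'(x)|\,dx<\infty$ and hence the limit $h_{ij}$. Your spectral-gap/coercivity picture for $v$ is correct in spirit, but on its own it does not produce convergence of $h$: the translation mode is in the kernel, so no coercivity acts on it, and one genuinely needs the $L^2$ smallness of $\partial_x u$ plus the Hamiltonian identity to close the argument.
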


\begin{rmk}
    We consider the case where all $\sigma_{ij}$ are equal for simplicity. The result can be extended to the general $\sigma_{ij}$ case without significant technical difficulty. 
\end{rmk}

\begin{rmk}
    By \cite[Proposition 4.3]{ss2024} and \cite[Theorem 1.2]{geng2024uniqueness}, the condition \eqref{L1 conv condition main thm} can be relaxed to the following: there exists a point $z\in \BR^2$ such that 
    \begin{equation*}
        \dist(u(z),\Lambda)>0,
    \end{equation*}
    where $\Lambda=\overline{U_{12}(\BR)\cup U_{23}(\BR)\cup U_{31}(\BR)}$ denotes the closure of three heteroclinic connections.  This remarkably weak condition is sufficient to establish \eqref{L1 conv condition main thm}.  
\end{rmk}

In the scalar case of \eqref{eq: AC system}, where $u: \BR^N\rightarrow \BR$ and $W(u)$ has only two energy wells $u=\pm 1$, the solutions are closely connected to the minimal surface theory, leading to the famous conjecture of De Giorgi \cite{Degiorgi1978}. The conjecture proposes that in dimension $N\leq 8$, any entire solution (not necessarily minimizing) that is monotonic in one variable only depends on that variable, meaning that all level sets are parallel hyperplanes orthogonal to the direction of monotonicity. There are many deep results contributing to the understanding of De Giorgi's conjecture and the relationship between Allen-Cahn equation and minimal surfaces, see for example \cite{ambrosio2000entire,farina20111d,ghoussoub1998conjecture,savin2009regularity,wang2017new,del2011giorgi,del2013entire,guaraco2018min,liu2017global,pacard2013stable} and the expository papers \cite{savin2010minimal,chan2018giorgi} for a detailed account.

In the vector-valued case, minimizing solutions are related to minimal partitions. The convergence of the vector-valued Allen-Cahn system to a minimal partition problem can be established by the $\Gamma$-convergence technique, see \cite{Baldo,sternberg1988effect,fonseca1989gradient,gazoulis} for further details. For the two dimensional case, recent developments have provided significant insights into the geometric and analytic description of fine structures of the minimizing Allen-Cahn solutions. Bethuel \cite{bethuel2021asymptotics} discovered a new monotonicity formula and successfully generalized many results on the regularity of interfaces from the scalar case to the vectorial case. Fusco \cite{fusco2024connectivity} studied the connectivity of the diffuse interface and showed the existence of a connected network with a well-defined structure that separates all phases. Alikakos and Fusco \cite{AF} investigated two examples with carefully designed Dirichlet boundary data, where a sharp lower bound for the energy minimizer could be derived, which in turn yielded precise pointwise estimates. 

As for the triple junction solution on $\mathbb{R}^2$, the first existence result was due to Bronsard, Gui and Schatzman \cite{bronsard1996three}, where an entire solution to \eqref{eq: AC system} was constructed in the equivariant class of the reflection group $\mathcal{G}$ corresponding to the symmetries of the equilateral triangle. The triple-well potential is also assumed invariant under $\mathcal{G}$. The solution is obtained as a minimizer in the equivariant class $u(gx)=gu(x), g\in \mathcal{G}$, hence is not necessarily stable under general perturbations. The result was later extended to the three dimensional case by Gui and Schatzman \cite{gui2008symmetric}. More recently, Fusco \cite{fusco} established the result of \cite{bronsard1996three} in the equivariant class of the rotation subgroup of $\mathcal{G}$ only, thus eliminating the two reflections. For a more detailed discussion, we refer to the book \cite{afs-book} and the references therein.

On bounded domains, several constructions of triple junction solutions without symmetry assumptions have been established. For instance, Sternberg and Ziemer \cite{sternberg1994local} constructed solutions on clover-shaped domains in $\BR^2$ using $\Gamma$-convergence; while Flores, Padilla, and Tonegawa \cite{flores2001higher} extended the construction to more general domains via a mountain pass argument. Regarding entire solutions without symmetry assumptions, Schatzman \cite{schatzman2002asymmetric} demonstrated the existence of a 2D solution connecting two heteroclinic connections for a double-well potential. Her result was later revisited in \cite{fusco2017layered,monteil2017metric,smyrnelis2020connecting}.

Recently, Alikakos and the author \cite{alikakos2024triple}, and Sandier and Sternberg \cite{ss2024}, independently established the existence of an entire minimizing solution, characterized by a triple junction structure at infinity without imposing symmetry assumptions. Using distinct methods, both studies obtained comparable results saying that along a subsequence $R_k\ri\infty$, the rescaled function $u_{R_k}(z):=u(R_kz)$ converges in $L^1_{loc}(\BR^2)$ to a triple junction map $u_{\mathcal{P}}$ of the form \eqref{form of triple junction sol}. Following these two works, the author \cite{geng2024uniqueness} showed the uniqueness of the blow-down limit $u_{\mathcal{P}}$, thus establishing that $u(Rz)\ri u_{\mathcal{P}}$ in $L^1_{loc}$ as $R\ri\infty$. These results will be discussed in more detail later in Section \ref{preliminary}.

A noteworthy byproduct of \cite{ss2024} is the first vectorial analogue of the De Giorgi's conjecture for 2D minimizing Allen-Cahn solutions of two phases. It states that if the blow-down limit of $u$ consists of only two phases $a_1$ and $a_2$, separated by a straight line, then $u$ must be invariant along the direction of this line. This result follows from sharp energy lower and upper bounds, which force the directional derivative of $u$ along the interface to be negligible, thereby ensuring the flatness of the interface. However, the same argument does not apply to the triple junction solution due to the lack of a sharp lower bound. The current paper addresses this challenge. Theorem \ref{thm: rigidity triple junction} establishes that the diffuse interface defined in \eqref{def: diffuse interface} forms a strip of width $O(1)$, which is asymptotically flat at infinity, and that $u$ is nearly an one-dimensional solution along the interface. This result generalizes the complete results of \cite{bronsard1996three} to the non-symmetric setting, as well as confirms a De Giorgi-type conjecture for Allen Cahn solutions with triple phases.

We now outline some key steps in the proof of Theorem \ref{thm: rigidity triple junction}. We start with the minimizing entire solution constructed in \cite{ss2024} and \cite{alikakos2024triple}, which satisfies condition \eqref{L1 conv condition main thm} for the partition $\mathcal{P}$ defined in \eqref{partition p}. Moreover, as shown in \cite{geng2024uniqueness}, the intersection of the diffuse interface $\G_\d:=\{z:\min\limits_{i=1,2,3}|u(z)-a_i|\geq \delta\}$ with $B_R$ is contained in an $O(R^\beta)$ neighborhood of the sharp interface $\pa\mathcal{P}$, for some $\beta\in (\f12,1)$. Outside this $O(R^\beta)$ neighborhood, the distance of $u(z)$ to $a_i$ is controlled by the estimate
\begin{equation*}
    |u(z)-a_i|\leq Ce^{-k(\dist(z,\pa \mathcal{P})- CR^{\beta})},\quad z\in B_R\cap D_i\cap \{dist(z, \pa\mathcal{P})>CR^{\beta}\}.
\end{equation*}

This exponential decay implies that on any large circle $\pa B_R$, the restriction $u|_{\pa B_R}$ approaches three phases respectively. Thus there must be at least three phase transitions along $\pa B_R$, which contribute a minimal energy of $3\sigma$ in the tangential direction. Integrating with the radius $R$ yields the sharp energy bound
\begin{equation}\label{intro: sharp bdd}
     3\sigma R-C\leq \int_{B_R}\left( \f12|\na u|^2 +W(u)
 \right)\,dz\leq 3\sigma R+C,
\end{equation}
as established in Lemma \ref{lemma: sharp bdd on BR}. From this energy bound, one can update the power $\beta$, which measures the closeness of $\G_\d$ to $\pa\mathcal{P}$, to $\f12$.

Since the tangential deformation and the potential take most of the energy, the radial deformation is relatively small: 
\begin{equation*}
    \int_{\BR^2}|\pa_r u|^2\,dxdy<\infty.
\end{equation*}
Assume the $a_1$-$a_3$ interface lies along the positive $x$-axis. Through technical arguments, it follows that
\begin{equation}\label{intro: pa x small}
    \int_{\{x>0\}}|\pa_x u|^2\,dxdy< \infty.
\end{equation}

For most $x>0$, the vertical slice $l_x:=\{(x,y): y\in \BR \}$ connects $a_3$ to $a_1$ from $(x,-\infty)$ to $(x,\infty)$, with an energy close to $\sigma$. By \cite{schatzman2002asymmetric}, this implies that $u|_{l_x}$ is close to the heteroclinic connection $U_{31}$ in the $H^1$ norm. Let $h(x)$ denotes the optimal translation of $U_{31}$ such that the $L^2$ distance between $u(x,\cdot)$ and $U_{31}(\cdot-h(x))$ is minimized. As shown in \cite{schatzman2002asymmetric}, $h(x)$ is a $C^2$ function of $x$ provided $u(x,\cdot)$ stays sufficiently close to $U_{31}$. A direct calculation yields
\begin{equation*}
    |h'(x)|\sim C\int_{-\infty}^\infty |\pa_x u(x,y)|^2\,dy,
\end{equation*}
which together with \eqref{intro: pa x small} implies that 
$h(x)$ converges to some finite value $h_{31}$ as $x\ri\infty$. Consequently, $u(x,\cdot)$ converges to $U_{31}(\cdot-h_{31})$ in $L^2$. This $L^2$ convergence can be upgraded to $C^{2,\alpha}$ convergence by standard Schauder estimates, thus proving \eqref{main result: conv}. Furthermore, this result indicates that the diffuse interface locates within an $O(1)$ neighborhood of $\pa \mathcal{P}$. From this, the pointwise estimate \eqref{main result: decay} follows from the comparison principle in elliptic theory. 

The article is organized as follows. In Section \ref{preliminary}, we review fundamental estimates for the minimizing solution of \eqref{eq: AC system} from \cite{AF,afs-book} and summarize results in \cite{ss2024,alikakos2024triple,geng2024uniqueness} on the existence of an entire triple junction solution. The proof of Theorem \ref{thm: rigidity triple junction} is presented in Section \ref{sec: proof}. We begin by proving \eqref{intro: sharp bdd} and refining the localization of the diffuse interface, then derive the horizontal deformation estimate \eqref{intro: pa x small}, and finally conclude the proof by analyzing the optimal translation function $h(x)$.

\section{Preliminaries}\label{preliminary}

Throughout the paper we denote by $z=(x,y)$ a two dimensional point and by $B_r(z)$ the two dimensional disk centered at the point $z$ with radius $r$. In addition, we let $B_r$ denote the disk centered at the origin. Moreover, without specific explanation, $C$ denotes a constant that depends on the potential $W$ and on the solution $u$. $C$ may have distinct values in various estimates. We first recall the following basic results (without proofs) which play a crucial part in our analysis.
\begin{lemma}[Lemma 2.1 in \cite{AF}]\label{lemma: potential energy estimate}
The hypotheses on $W$ imply the existence of $\delta_W>0$, and constants $c,C>0$ such that
\beqo
\begin{split}
&|u-a_i|=\delta\\
\Rightarrow & \ \f12 c\delta^2\leq W(u)\leq \f12 C \delta^2,\quad \forall \delta<\delta_W,\ i=1,2,3.
\end{split}
\eeqo
Moreover if $\min\limits_{1\leq i\leq N} |u-a_i|\geq \delta$ for some $\delta<\delta_W$, then $W(u)\geq \f12 c\delta^2.$
\end{lemma}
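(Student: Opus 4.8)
The plan is to derive the two-sided quadratic estimate near each well from a second-order Taylor expansion, and to obtain the ``far from all wells'' lower bound by a compactness argument that exploits the coercivity hypothesis $W_u(u)\cdot u>0$ for $|u|>M$ to control $W$ at infinity; here the index set $\{1,\dots,N\}$ appearing in the statement is $\{1,2,3\}$. For the local part: since $W\ge 0$, $W\in C^2$ and $W(a_i)=0$, each $a_i$ is a global minimum, so $W_u(a_i)=0$, and applying Taylor's formula with Lagrange remainder to $t\mapsto W(a_i+t(u-a_i))$ on $[0,1]$ gives, for every $u$, a point $w$ on the segment $[a_i,u]$ with $W(u)=\frac12(u-a_i)^TW_{uu}(w)(u-a_i)$. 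By continuity of $W_{uu}$ I would fix $r_i>0$ with $\|W_{uu}(w)-W_{uu}(a_i)\|\le c_1/2$ whenever $|w-a_i|\le r_i$, shrinking $r_i$ so that the balls $B_{r_i}(a_i)$ are pairwise disjoint, lie in a fixed large ball, and contain no well other than $a_i$. For $|u-a_i|<r_i$ the intermediate point satisfies $|w-a_i|<r_i$, so (H1) gives $\frac{c_1}{2}|u-a_i|^2\le(u-a_i)^TW_{uu}(w)(u-a_i)\le(c_2+\frac{c_1}{2})|u-a_i|^2\le\frac{3c_2}{2}|u-a_i|^2$, hence $\frac{c_1}{4}|u-a_i|^2\le W(u)\le\frac{3c_2}{4}|u-a_i|^2$. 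Taking $c:=c_1/2$, $C:=3c_2/2$ and requiring $\delta_W\le\min_i r_i$ yields the claimed bounds on $\{|u-a_i|=\delta\}$, and moreover $W(u)\ge\frac12 c\,\delta^2$ on each annulus $\{\delta\le|u-a_i|<r_i\}$.

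For the ``moreover'' assertion it then remains to bound $W(u)$ below by $\frac12 c\,\delta^2$ when $\min_i|u-a_i|\ge\delta$ and $u\notin\bigcup_iB_{r_i}(a_i)$. For large $|u|$ I would use that along any ray $\frac{d}{dt}W(tv)=t^{-1}\bigl(W_u(tv)\cdot(tv)\bigr)>0$ once $t|v|>M$, so $W$ is nondecreasing in $|u|$ outside $\overline{B_M}$; after enlarging $M$ so that every $a_i$ and every $\overline{B_{r_i}(a_i)}$ lies in $B_M$, this gives $W(u)\ge m_1:=\min_{|w|=M}W(w)>0$ for all $|u|\ge M$. On the compact set $S:=\overline{B_M}\setminus\bigcup_iB_{r_i}(a_i)$ the function $W$ is continuous and strictly positive, since its only zeros are the $a_i$, so $m_0:=\min_S W>0$. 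I would then further impose $\delta_W\le\sqrt{4\min(m_0,m_1)/c_1}$, so that $\frac{c_1}{4}\delta_W^2\le\min(m_0,m_1)$. Then for $u\notin\bigcup_iB_{r_i}(a_i)$ with $\delta<\delta_W$: if $|u|\ge M$ then $W(u)\ge m_1\ge\frac{c_1}{4}\delta_W^2\ge\frac12 c\,\delta^2$, while if $|u|<M$ then $u\in S$ and $W(u)\ge m_0\ge\frac{c_1}{4}\delta_W^2\ge\frac12 c\,\delta^2$. Combined with the annular estimate from the first paragraph, this covers every $u$ with $\min_i|u-a_i|\ge\delta$.

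The only step beyond routine calculus is the second one: the Taylor bounds degenerate once $|u-a_i|$ is of order one and carry no information for large $|u|$, so one must separately produce a uniform positive lower bound for $W$ on the entire region away from the wells. The bounded part is immediate from continuity and positivity of $W$, but the behaviour at infinity genuinely requires the coercivity hypothesis $W_u(u)\cdot u>0$, used here through the monotonicity of $W$ along outward rays; without it $W$ could decay to $0$ at infinity and the stated estimate would fail.
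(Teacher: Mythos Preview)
The paper does not supply its own proof of this lemma; it is quoted verbatim from \cite{AF} as a preliminary result (``We first recall the following basic results (without proofs)\ldots''). Your argument is correct and is precisely the standard one: a second-order Taylor expansion together with the Hessian bounds in (H1) gives the two-sided quadratic estimate near each well, and the uniform lower bound away from all wells follows by combining the positive minimum of $W$ on the compact set $\overline{B_M}\setminus\bigcup_i B_{r_i}(a_i)$ with the radial monotonicity of $W$ outside $B_M$ coming from $W_u(u)\cdot u>0$.
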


\begin{lemma}[Lemma 2.3 in \cite{AF}]\label{lemma: 1D energy estimate}
Take $i\neq j \in \{1,...,N\}$, $\d <\d_W$ and $s_+>s_-$ be two real numbers. Let $v:(s_-,s_+)\ri \BR^2$ be a smooth map that minimizes the energy functional 
\beqo
J_{(s_-,s_+)}(v):=\int_{s_-}^{s_+} \left(\f12|\na v|^2+W(v)\right)\,dx 
\eeqo
subject to the boundary condition 
\beqo
|v(s_-)-a_i|=|v(s_+)-a_j|=\delta.
\eeqo
Then
\beqo
J_{(s_-,s_+)}(v)\geq \sigma_{ij}-C\delta^2,
\eeqo
where $C$ is the constant in Lemma \ref{lemma: potential energy estimate}. 
\end{lemma}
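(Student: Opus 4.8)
The plan is to prove this lower bound by the standard ``capping'' construction: extend the finite-interval map $v$ to an entire connection from $a_i$ to $a_j$ by gluing on two short interpolating segments near the endpoints, each carrying only $O(\delta^2)$ of energy, and then compare with $\sigma_{ij}$, which by hypothesis (H2) is the minimum of $J$ over all such connections. Note in passing that the minimality of $v$ is never actually used: the estimate holds for every smooth map with the prescribed boundary values.

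Concretely, set $p_-:=v(s_-)$ and $p_+:=v(s_+)$, so that $|p_--a_i|=|p_+-a_j|=\delta<\delta_W$. Fix a length parameter $\ell>0$ and define $\tilde U:\BR\ri\BR^2$ by: $\tilde U\equiv a_i$ on $(-\infty,s_--\ell]$; the affine segment from $a_i$ to $p_-$ on $[s_--\ell,s_-]$; $\tilde U=v$ on $[s_-,s_+]$; the affine segment from $p_+$ to $a_j$ on $[s_+,s_++\ell]$; and $\tilde U\equiv a_j$ on $[s_++\ell,+\infty)$. Then $\tilde U$ is continuous, piecewise smooth, equal to $a_i$ (resp. $a_j$) outside a bounded interval, and satisfies $\lim_{\eta\ri-\infty}\tilde U=a_i$, $\lim_{\eta\ri+\infty}\tilde U=a_j$; hence it is an admissible competitor for the variational problem defining $\sigma_{ij}$, so that $J(\tilde U)\ge \sigma_{ij}$.

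Now split $J(\tilde U)$ over the five pieces. The two constant pieces contribute nothing, since $W(a_i)=W(a_j)=0$ and $\tilde U'=0$ there. On the left cap $[s_--\ell,s_-]$ the map travels at constant speed $|\tilde U'|=\delta/\ell$, so its Dirichlet part equals $\tfrac12(\delta/\ell)^2\ell=\delta^2/(2\ell)$; moreover every point $q$ of this cap satisfies $|q-a_i|\le\delta<\delta_W$, so Lemma \ref{lemma: potential energy estimate} gives $W(q)\le\tfrac12 C|q-a_i|^2\le\tfrac12 C\delta^2$, whence its potential part is at most $\tfrac12 C\delta^2\ell$. The right cap is handled identically with $a_j$ in place of $a_i$. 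Adding the middle contribution $J_{(s_-,s_+)}(v)$, we obtain $\sigma_{ij}\le J(\tilde U)\le J_{(s_-,s_+)}(v)+(\ell^{-1}+C\ell)\delta^2$, that is, $J_{(s_-,s_+)}(v)\ge \sigma_{ij}-(\ell^{-1}+C\ell)\delta^2$. Choosing $\ell$ so as to make the coefficient as small as desired (for instance $\ell=C^{-1/2}$, giving $2\sqrt C$, or simply $\ell=1$) and renaming the resulting constant — which depends only on the $C$ of Lemma \ref{lemma: potential energy estimate} — yields $J_{(s_-,s_+)}(v)\ge \sigma_{ij}-C\delta^2$.

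I do not anticipate any real obstacle. The only point worth a sentence of care is the admissibility of $\tilde U$ as a competitor for $\sigma_{ij}$, i.e. that the energy class appearing in (H2) is stable under this piecewise gluing; this is immediate since $\tilde U$ agrees with $a_i$ (resp. $a_j$) for all sufficiently negative (resp. positive) $\eta$ and is Lipschitz in between, so it has finite energy and the correct limits at $\pm\infty$. If one insists that the constant be literally the one in Lemma \ref{lemma: potential energy estimate}, this is merely a matter of fixing the interpolation length $\ell$ appropriately.
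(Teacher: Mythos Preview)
Your argument is correct and is the standard proof of this lemma. Note, however, that the present paper does not prove it: the lemma is quoted from \cite{AF} and explicitly listed among the ``basic results (without proofs)'' in Section~\ref{preliminary}, so there is no in-paper proof to compare against. Your capping construction is precisely the argument one finds in the cited reference. The only minor caveat is that the constant you obtain, $(\ell^{-1}+C\ell)$ for any choice of $\ell>0$, is at best $2\sqrt{C}$ rather than literally $C$; the phrase ``where $C$ is the constant in Lemma~\ref{lemma: potential energy estimate}'' should be read as ``depending only on that constant,'' which is all that is ever used downstream.
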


\begin{lemma}[Variational maximum principle, \cite{AF2}]\label{lemma: maximum principle}
     There exists a positive constant $r_0=r_0(W)$ such that for any $u\in W^{1,2}(\Omega,\BR^2)\cap L^\infty(\Om,\BR^2)$ being a minimizer of $E(\cdot,\Om)$, if $u$ satisfies 
    $$
    \vert u(x)-a_i\vert\leq r \text{ on }\pa\Om, \ \ \text{for some }r<r_0,\ i\in\{1,2,3\},
    $$
    then 
    $$
    \vert u(x)-a_i\vert\leq r\ \ \forall x\in \Om.
    $$
\end{lemma}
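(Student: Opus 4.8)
\emph{Proof sketch.} I would run a comparison (truncation) argument, driven only by the local convexity of $W$ at the wells. First I would fix $\rho_0=\rho_0(W)>0$ so small that, by (H1) ($W\in C^2$, $W_{uu}(a_i)\ge c_1I$), $W$ is strictly convex on each $\overline{B_{\rho_0}(a_i)}$. Since $a_i$ is an interior minimum of $W$, so $W(a_i)=0$ and $W_u(a_i)=0$, strict convexity makes $t\mapsto W(a_i+t\xi)$ strictly increasing on $[0,\rho_0]$ for every unit vector $\xi$; equivalently, the nearest-point projection $\Pi_\rho$ of $\BR^2$ onto $\overline{B_\rho(a_i)}$ satisfies $W(\Pi_\rho w)\le W(w)$ for all $w\in\overline{B_{\rho_0}(a_i)}$ and $\rho\le\rho_0$, strictly whenever $|w-a_i|>\rho$. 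I would take $r_0$ to be $\rho_0$ (shrunk so that also $r_0\le\delta_W$ from Lemma \ref{lemma: potential energy estimate}). Since $u\in C^2(\Om)$ by elliptic regularity, the superlevel set $G:=\{x\in\Om:\,|u(x)-a_i|>r\}$ is open, with $\overline G\cap\pa\Om\subseteq\{|u-a_i|=r\}$, and the task reduces to showing $G=\emptyset$.

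The principal computation, performed under the temporary hypothesis that $u(\overline\Om)\subseteq\overline{B_{\rho_0}(a_i)}$, is to test the minimality of $u$ against $v:=\Pi_r\circ u$. This $v$ lies in $W^{1,2}(\Om,\BR^2)\cap L^\infty$ and coincides with $u$ on $\Om\setminus G$, hence on $\pa\Om$, so it is an admissible competitor. Because $\Pi_r$ is $1$-Lipschitz we get $|\na v|\le|\na u|$ a.e., and by the convexity fact above $W(v)\le W(u)$ a.e., both inequalities being strict on $G$; moreover $\na u$ cannot vanish identically on $G$ (otherwise $u$ would be constant on a component of $G$, and its value there would lie at distance exactly $r$ from $a_i$, which is impossible inside $G$). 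Hence $E(v,\Om)<E(u,\Om)$ as soon as $G\ne\emptyset$, contradicting minimality; therefore $|u-a_i|\le r$ throughout $\Om$.

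It remains to establish the confinement $u(\overline\Om)\subseteq\overline{B_{\rho_0}(a_i)}$, and this I expect to be the main obstacle, since minimality is genuinely required here (the statement is false for general, non-minimizing, solutions). The mechanism I would try to make rigorous: if $|u(x_0)-a_i|\ge\rho_0$ for some $x_0\in\Om$, then on the connected component $G'\Subset\Om$ of $G$ containing $x_0$ the map $u$ must carry out a transition between the vicinity of $a_i$ (attained on $\pa G'\cap\Om$, where $W\le Cr^2$ by Lemma \ref{lemma: potential energy estimate}) and the set $\{|w-a_i|\ge\rho_0\}$; inserting the pointwise bound $\f12|\na u|^2+W(u)\ge\sqrt{2W(u)}\,|\na u|$ into the co-area formula along the level sets $\{|u-a_i|=s\}$ with $r<s<\rho_0$ then forces $E(u,G')$ to exceed a positive amount that does not shrink as $r\to0$. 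On the other hand, the competitor that equals $u$ outside a thin collar of $\pa G'$, equals $a_i$ in the bulk of $G'$, and interpolates radially across the collar has energy on $G'$ bounded by the energy $u$ already spends near $\pa G'$ plus a term of order $r^2$; for $r<r_0$ small this is strictly less, contradicting minimality. Converting this heuristic into a proof — in particular handling the case in which $u$ dips toward a different well $a_j$ inside $G'$, where $W$ is small again and distances must be measured in the degenerate $\sqrt{2W}$-metric — is the delicate part, and it is the content of \cite{AF2}. Once confinement is available, the previous paragraph concludes the proof.
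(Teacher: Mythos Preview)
The paper does not prove this lemma; it is quoted in the Preliminaries as a known result from \cite{AF2} and used as a black box thereafter. Your sketch follows the standard route that \cite{AF2} takes: once confinement $u(\overline\Om)\subset\overline{B_{\rho_0}(a_i)}$ is known, the nearest-point projection $\Pi_r$ onto $\overline{B_r(a_i)}$ is a $1$-Lipschitz competitor that weakly decreases both terms of the energy and strictly decreases the potential on $G=\{|u-a_i|>r\}$, forcing $G=\emptyset$. You correctly isolate confinement as the genuine difficulty---without it $\Pi_r$ can raise $W$ when $u$ visits another well---and you correctly defer that step to \cite{AF2}. One minor point: the strict inequality $E(v,\Om)<E(u,\Om)$ already follows from the potential term alone (the open set $G$ has positive measure and $W(u)-W(v)$ is continuous and positive there), so the aside about $\na u$ not vanishing identically on $G$ is unnecessary.
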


Suppose $u$ is a bounded minimizing solution of \eqref{eq: AC system}. Schauder estimates imply that there exists a constant $M$ such that
\begin{equation}\label{reg of u}
    \|u\|_{C^{2,\alpha}(\BR^2,\BR^2)}\leq M,\quad \forall \al\in(0,1).
\end{equation}
To characterize the blowdown limits of $u$, we invoke the following compactness result.
\begin{proposition}[Proposition 3.1 in \cite{ss2024}] \label{prop: conv of blowdown} Let $u:\BR^2\rightarrow \BR^2$ be a minimizing solution of \eqref{eq: AC system} and $\{r_j\}\ri \infty$ be any sequence. Then there exists a subsequence $\{r_{j_k}\}$ and a function $u_0\in \mathrm{BV}_{loc}(\BR^2,\{a_i\}_{i=1}^N)$ such that the blowdowns $\{u_{r_{j_k}}(z)=u(r_{j_k}z)\}$ satisfy
\beqo
u_{r_{j_k}}\rightarrow u_0 \quad \text{in }L_{loc}^1(\BR^2,\BR^2).
\eeqo
Here $u_0$ is a minimizing partition map on $\BR^2$ in the sense that 
\beqo
E_0^*(u_0,K)\leq E_0^*(v,K)
\eeqo
for every compact set $K$ and every $v\in \mathrm{BV}_{loc}(\BR^2, \{a_i\}_{i=1}^3)$ such that $v=u_0$ on $\BR^2\setminus K$. Along the same subsequence, the following energy estimate holds
\beq\label{ene conv: general}
E_{r_{i_k}}(u_{r_{j_k}},K)\rightarrow E_0^*(u_0,K), \quad \forall K\Subset \BR^2,
\eeq
where $E_R$ denotes the rescaled energy
\beqo
E_R(v,\Om):=\int_{\Omega} \left(\f1{2R} |\na u|^2+RW(u)\right)\,dz.
\eeqo
\end{proposition}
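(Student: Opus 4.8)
\emph{Proof sketch.} The statement packages together three classical facts about the vectorial Allen--Cahn functional: a uniform energy density bound from minimality, the Modica--Mortola/Baldo compactness together with the $\Gamma$-liminf inequality, and the transfer of both minimality and energy to the blowdown limit. The plan is to establish these in that order. First I would record the density bound. Since $u$ minimizes $E(\cdot,B_R(z))$ against its own boundary data, comparing with the competitor that equals $u$ on $\pa B_R(z)$, interpolates radially (through a fixed smooth cutoff) between the trace of $u$ and a fixed well $a_1$ on the annulus $B_R(z)\setminus B_{R-1}(z)$, and is identically $a_1$ on $B_{R-1}(z)$ — using \eqref{reg of u} to bound that competitor's gradient and $W$-values, hence its energy, by $O(R)$ — gives $E(u,B_R(z))\le CR$ with $C$ independent of $z$ and $R$. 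After the scaling $z\mapsto Rz$ this reads $E_R(u_R,B_\rho)=\f1R E(u,B_{R\rho})\le C\rho$, so the blowdowns $\{u_{r_j}\}$ have rescaled energy uniformly bounded on every ball.

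With $\sup_j E_{r_j}(u_{r_j},B_\rho)<\infty$ for each $\rho$, I would invoke the vectorial Modica--Mortola compactness (Baldo, Fonseca--Tartar, \cite{Baldo,fonseca1989gradient}): (H1) and Lemma \ref{lemma: potential energy estimate} force $u_{r_j}$ to be pointwise close to $\{a_1,\dots,a_N\}$ off a set of small measure, while a coarea argument on $\dist(u_{r_j},\{a_i\})$ controls the transition regions, producing a subsequence $r_{j_k}$ and $u_0\in \mathrm{BV}_{loc}(\BR^2,\{a_i\})$ with $u_{r_{j_k}}\to u_0$ in $L^1_{loc}$. The same circle of ideas yields the $\Gamma$-liminf inequality: for every open $K\Subset\BR^2$, $\liminf_k E_{r_{j_k}}(u_{r_{j_k}},K)\ge E_0^*(u_0,K)$, since after slicing and blow-up the energy concentrates on the rectifiable interface $\bigcup_{i<j}\pa^*u_0^{-1}(a_i)\cap\pa^*u_0^{-1}(a_j)$ with density at least the geodesic cost between the adjacent wells, which by (H2) and \eqref{all equal sigma} equals $\sigma_{ij}=\sigma$. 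In particular $u_0$ has locally finite perimeter.

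It then remains to transfer minimality. Fix a competitor $v\in \mathrm{BV}_{loc}(\BR^2,\{a_i\})$ with $v=u_0$ off a compact $K$, and choose $\rho$ with $K\Subset B_\rho$ and $\ch(\pa^*v\cap\pa B_\rho)=\ch(\pa^*u_0\cap\pa B_\rho)=0$ (which holds for a.e.\ $\rho$; note that $v=u_0$ is a single well $a_{i_0}$ on an annular neighborhood of $\pa B_\rho$). By Baldo's recovery sequence \cite{Baldo} there are smooth $w_r\to v$ in $L^1_{loc}$ with $\limsup_r E_r(w_r,B_\rho)\le E_0^*(v,\overline{B_\rho})$. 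Because $w_{r_{j_k}}$ need not agree with $u_{r_{j_k}}$ on $\pa B_\rho$, I would glue the two in a thin annulus $B_\rho\setminus B_{\rho-\e}$: on it both sequences converge in $L^1$ to $a_{i_0}$, so (after a routine mollification, and using Lemma \ref{lemma: potential energy estimate} together with \eqref{reg of u}) an $L^\infty$-small radial interpolation between their traces produces $\tilde u_{r_{j_k}}$ equal to $u_{r_{j_k}}$ on $\pa B_\rho$ with $E_{r_{j_k}}(\tilde u_{r_{j_k}},B_\rho\setminus B_{\rho-\e})\to 0$ as $k\to\infty$ and then $\e\to0$. Minimality of $u$ — equivalently of $u_{r_{j_k}}$ on $B_\rho$ after rescaling — gives $E_{r_{j_k}}(u_{r_{j_k}},B_\rho)\le E_{r_{j_k}}(\tilde u_{r_{j_k}},B_\rho)$; letting $k\to\infty$ and applying the $\Gamma$-liminf on $B_\rho$ yields $E_0^*(u_0,\overline K)\le E_0^*(u_0,\overline{B_\rho})\le E_0^*(v,\overline{B_\rho})=E_0^*(v,\overline K)$, so $u_0$ is a minimizing partition map. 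Taking $v=u_0$ in the same argument gives the matching upper bound $\limsup_k E_{r_{j_k}}(u_{r_{j_k}},\overline{B_\rho})\le E_0^*(u_0,\overline{B_\rho})$, which with the $\Gamma$-liminf forces $E_{r_{j_k}}(u_{r_{j_k}},B_\rho)\to E_0^*(u_0,\overline{B_\rho})$ for a.e.\ $\rho$; a standard exhaustion (using that $\rho\mapsto E_0^*(u_0,\overline{B_\rho})$ has at most countably many jumps and that $\ch(\pa^*u_0\cap\pa K)=0$ for generic $K$) upgrades this to $E_{r_{j_k}}(u_{r_{j_k}},K)\to E_0^*(u_0,K)$ for every $K\Subset\BR^2$.

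The main obstacle is the gluing in the third step: one must join the recovery sequence to $u_{r_{j_k}}$ through a vanishingly thin annulus at cost $o(1)$ in rescaled energy. This is exactly why $\pa B_\rho$ is chosen to avoid the limiting interface — there both $u_{r_{j_k}}$ and $w_{r_{j_k}}$ are trapped near the single well $a_{i_0}$ by Lemma \ref{lemma: potential energy estimate}, so an $L^\infty$-small interpolation is cheap — and it is the only point in the argument where the minimality of $u$, as opposed to its mere boundedness and regularity, is used.
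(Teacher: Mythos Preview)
The paper does not give its own proof of this proposition: it cites \cite[Proposition 3.1]{ss2024} and remarks that the energy convergence \eqref{ene conv: general} follows from Baldo's $\Gamma$-convergence \cite{Baldo} (without the mass constraint, cf.\ \cite{gazoulis}). Your sketch is therefore more detailed than the paper itself, and its architecture --- linear density bound from a crude competitor, Modica--Mortola/Baldo compactness and $\Gamma$-liminf, then a recovery-sequence gluing to transfer minimality and energy --- is the standard one and is correct in outline.

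There is, however, a real error in the gluing step. You assert that ``$v=u_0$ is a single well $a_{i_0}$ on an annular neighborhood of $\pa B_\rho$'' and base the cheap interpolation on this. That is false: the interface $\pa^*u_0$ is one-dimensional, so a generic $\pa B_\rho$ meets it in finitely many points (so $\ch(\pa^*u_0\cap\pa B_\rho)=0$ is automatic), but $u_0$ still takes \emph{several} values on every annulus about $\pa B_\rho$ --- for the triple junction itself, all three. Near those crossing points neither $u_{r_{j_k}}$ nor $w_{r_{j_k}}$ is close to any single well, and an $L^\infty$-small radial interpolation is unavailable. The usual repair is the De Giorgi averaging trick (as in \cite{Baldo,fonseca1989gradient}): fix a thick annulus, slice it into $N$ concentric sub-annuli, use the uniform energy bound to find one sub-annulus on which both $u_{r_{j_k}}$ and $w_{r_{j_k}}$ carry rescaled energy $O(1/N)$, and interpolate there; the transition cost is then $O(1/N)$ plus a term controlled by $\|u_{r_{j_k}}-w_{r_{j_k}}\|_{L^2}$ on that sub-annulus, which vanishes as $k\to\infty$. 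Letting $k\to\infty$ and then $N\to\infty$ gives the $o(1)$ gluing cost. With this correction the remainder of your argument goes through.
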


Note that the energy convergence result \eqref{ene conv: general} is not explicitly stated in \cite[Proposition 3.1]{ss2024}, but can be derived from the $\G$--convergence result in Baldo \cite{Baldo} that holds also without the mass constraint (see Gazoulis \cite{gazoulis}). 

By a ``clearing-out" argument, the $L^1_{loc}$ convergence can be improved to uniform convergence outside the support of $\na u_0$. 
\begin{proposition}[Proposition 4.2 in \cite{ss2024}]\label{prop: local uniform convergence}
Let $\{u_{r_{j}}\}$ be a sequence of blowdowns of the minimizing solution $u$ that converges in $L^1_{loc}$ to $u_0\in \mathrm{BV}_{loc}(\BR^2, \{a_i\}_1^N)$. Then $\{u_{r_{j}}\}$ converges to $u_0$ uniformly outside the support of $\na u_0$.  
\end{proposition}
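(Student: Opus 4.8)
\emph{Proof strategy.} The plan is to reduce everything to a local statement together with a clearing‑out estimate for the fixed solution $u$. It suffices to show: for every $z_0\in\BR^2\setminus\mathrm{supp}(\na u_0)$ there are $\rho>0$ and an index $i$ such that $u_{r_j}\to a_i$ uniformly on $B_\rho(z_0)$. Since $\mathrm{supp}(\na u_0)$ is closed and $u_0$ takes values in the discrete set $\{a_1,a_2,a_3\}$, fix $\rho>0$ and $i$ with $u_0\equiv a_i$ a.e.\ on $B_{4\rho}(z_0)$. Then $\|u_{r_j}-a_i\|_{L^1(B_{4\rho}(z_0))}\to 0$ by hypothesis, and, applying \eqref{ene conv: general} along the full sequence (any subsequence has a further subsequence along which the rescaled energy converges, necessarily to $E_0^*(u_0,\cdot)$ because the $L^1$‑limit is $u_0$), we get $E_{r_j}(u_{r_j},\overline{B_{3\rho}(z_0)})\to E_0^*(u_0,\overline{B_{3\rho}(z_0)})=0$; the right‑hand side vanishes since $u_0$ is constant on the open set $B_{4\rho}(z_0)\supset\overline{B_{3\rho}(z_0)}$, so no reduced boundary of any $u_0^{-1}(a_k)$ meets $\overline{B_{3\rho}(z_0)}$. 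Writing $p_j:=r_jz_0$ and $\mathcal R_j:=3\rho\,r_j\to\infty$, a change of variables recasts this as: the normalized energy of $u$ satisfies $\mathcal R_j^{-1}E(u,B_{\mathcal R_j}(p_j))\to 0$.

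The heart of the matter is a clearing‑out lemma for $u$: there exist $\varepsilon_0>0$, $\bar R\ge 1$ and $\delta_1\in(0,r_0)$ depending only on $W$ (with $\delta_1$ small enough for the linearization below) such that if $R\ge\bar R$ and $R^{-1}E(u,B_{2R}(p))\le\varepsilon_0$, then $\min_k|u(q)-a_k|\le\delta_1$ for every $q\in B_R(p)$; since $u$ is continuous and $B_R(p)$ is connected, $u$ is then within $\delta_1$ of a single well on $B_R(p)$. I would prove this by contradiction plus a density lower bound. If $\min_k|u(q_0)-a_k|\ge\delta_1$ at some $q_0\in B_R(p)$, then by the uniform gradient bound \eqref{reg of u} and Lemma \ref{lemma: potential energy estimate} one has $W(u)\ge c(\delta_1)>0$ on a fixed‑size ball around $q_0$, so $E(u,B_1(q_0))\ge c'(\delta_1)>0$; the monotonicity formula for the vectorial Allen–Cahn energy (in the plane $s\mapsto s^{-1}E(u,B_s(q_0))$ is, for a minimizer, non‑decreasing up to controlled lower‑order corrections; see \cite{bethuel2021asymptotics}) then propagates this to $E(u,B_R(q_0))\ge c''(\delta_1)\,R$, contradicting $R^{-1}E(u,B_{2R}(p))\le\varepsilon_0$ once $\varepsilon_0$ is small. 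An alternative is a direct minimality argument, comparing $u$ on $B_s(q_0)$ with the competitor equal to the nearest well inside and interpolated in a boundary layer of width $O(1)$.

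With the lemma in hand, apply it with $p=p_j$ and $2R=\mathcal R_j$, i.e.\ $R=\tfrac32\rho\,r_j$: for $j$ large, $R\ge\bar R$ and $R^{-1}E(u,B_{2R}(p_j))=2\mathcal R_j^{-1}E(u,B_{\mathcal R_j}(p_j))\to 0\le\varepsilon_0$, so $|u-a_{i_j}|\le\delta_1$ on $B_{(3/2)\rho r_j}(p_j)$ for a single index $i_j$, i.e.\ $|u_{r_j}-a_{i_j}|\le\delta_1$ on $B_{(3/2)\rho}(z_0)$. Choosing $\delta_1<\tfrac12\min_{k\ne l}|a_k-a_l|$, the $L^1$‑convergence $u_{r_j}\to a_i$ on $B_{(3/2)\rho}(z_0)$ forces $a_{i_j}=a_i$ for all $j$ large. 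Finally I would upgrade $\delta_1$‑closeness to uniform convergence by a barrier argument: with $v:=u-a_i$ and $A:=W_{uu}(a_i)\ge c_1 I$ from (H1), the scalar function $g:=|v|^2$ satisfies $\Delta g=2|\na v|^2+2v^{\mathsf T}(Av)+O(|v|^3)\ge c_1 g$ on $B_{(3/2)\rho r_j}(p_j)$ once $\delta_1$ is small, while $g\le\delta_1^2$ on the boundary; comparing $g$ with the radial supersolution $w\mapsto\delta_1^2\,\Phi(|w-p_j|)/\Phi(\tfrac32\rho r_j)$, where $\Phi$ solves $\Delta\Phi=c_1\Phi$, $\Phi(0)=1$ (a modified Bessel function, $\Phi(s)\sim C s^{-1/2}e^{\sqrt{c_1}\,s}$), yields $\sup_{B_\rho(z_0)}|u_{r_j}-a_i|^2=\sup_{B_{\rho r_j}(p_j)}|v|^2\le C\delta_1^2\,e^{-\frac12\sqrt{c_1}\,\rho r_j}\to 0$, i.e.\ the claimed uniform convergence (this is the same mechanism that will later produce \eqref{main result: decay}).

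The main obstacle is the clearing‑out lemma — converting a vanishing normalized energy into closeness to a single well at the blow‑up scale. This step genuinely requires the monotonicity formula for the vectorial Allen–Cahn energy (or an equivalent minimality‑based density estimate); granted that, the localization, the energy bookkeeping, the identification of the well via $L^1$‑convergence, and the exponential barrier are all routine.
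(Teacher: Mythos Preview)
The paper does not prove this proposition; it quotes it from \cite{ss2024} and only remarks that it follows from a ``clearing-out'' argument, or alternatively from the vectorial Caffarelli--C\'ordoba density estimate \cite{AF3} (cf.\ \cite[Proposition~5.3]{afs-book}). Your proposal is precisely the clearing-out route, written out in full, so the overall strategy matches what the paper indicates; your subsequence argument for transferring \eqref{ene conv: general} to the full sequence, the identification of the well via the $L^1$ limit, and the elliptic barrier in your step~5 upgrading $\delta_1$-closeness to exponential smallness are all correct.

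The one soft spot is your proof of the clearing-out lemma via monotonicity of $s\mapsto s^{-1}E(u,B_s(q_0))$. Exact monotonicity of this quantity is not available for vectorial minimizers: in the scalar case it comes from Modica's inequality $W(u)\ge\tfrac12|\nabla u|^2$, which is open for systems, and the monotonicity formulas in \cite{bethuel2021asymptotics} and \cite{ss2024} are asymptotic (large radii, fixed center) rather than the finite-scale statement at an arbitrary point $q_0$ that you need. Your own alternative --- a direct minimality/competitor comparison giving a linear energy lower bound on balls centered at a point far from all wells --- is exactly the Caffarelli--C\'ordoba density estimate the paper points to, and that is how the lemma should be closed. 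With that substitution the argument is complete.
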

The uniform convergence result above can also be derived from the the vector version of the Caffarelli--C\'{o}rdoba density estimate \cite{AF3}, see for example \cite[Proposition 5.3]{afs-book}.

Moreover, utilizing a Pohozaev identity and estimates of energy lower and upper bounds, \cite{ss2024} establishes an asympototic monotonicity formula and an energy equipartition result, which further implies the homogeneity of the blowdown limit $u_0$. Related results are summarized below. 

\begin{proposition}[Lemma 3,4, Lemma 3.5 \& Theorem 3.6 in \cite{ss2024}]\label{prop: homogeneity of u0}
    Let $u:\BR^2\ri\BR^2$ be a minimizing solution of \eqref{eq: AC system} and $u_0$ be a blowdown limit guaranteed by Proposition \ref{prop: conv of blowdown}. Then $u_0$ is a homogeneous map, i.e.
    \beqo
    u_0(z)=u_0(\frac{z}{|z|}),\quad \forall z\in\BR^2. 
    \eeqo
    The following energy limits hold:
    \begin{equation}\label{eq: energy limit}
        \lim\limits_{R\ri\infty} \f{1}{R} \int_{B_R}W(u)\,dz=\lim\limits_{R\ri\infty} \f1R\int_{B_R} \f12|\na u|^2\,dz=\lim\limits_{R\ri\infty} \f1R\int_{B_R} \f12|\pa_T u|^2\,dz=\frac{1}{2} E_0^*(u_0,B_1), 
    \end{equation}
    where $\pa_T$ means the tangential derivative on $\pa B_R$. Moreover, for every positive $\lam_1<\lam_2$ it holds that 
    \beqo
    \begin{split}
        &\qquad \lim_{R\ri\infty} \f{1}{R}\int_{B_{\lam_2 R}\setminus B_{\lam_1 R}} |\pa_r u|^2\,dz=0,\\
        & \lim_{R\ri\infty} \f{1}{R}\int_{B_{\lam_2 R}\setminus B_{\lam_1 R}} \bigg|W(u)-\f12|\na u|^2\bigg|\,dz=0,
    \end{split}
    \eeqo
where $\pa_r u=\na u \cdot \f{x}{|x|}$ represents the radial derivative of $u$.
\end{proposition}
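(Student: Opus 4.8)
The plan is to obtain all three assertions from a Pohozaev-type monotonicity formula for the rescaled energy $\Phi(R):=\frac1R E(u,B_R)$, combined with sharp slicewise energy bounds. First I would test \eqref{eq: AC system} against $(z\cdot\nabla)u$ and integrate over $B_R$; the standard integrations by parts give the planar Pohozaev identity
\begin{equation*}
2\int_{B_R}W(u)\,dz=R\int_{\partial B_R}\Big(W(u)+\tfrac12|\partial_T u|^2-\tfrac12|\partial_r u|^2\Big)\,d\mathcal{H}^1 .
\end{equation*}
Substituting it into $\Phi'(R)=-\frac1{R^2}E(u,B_R)+\frac1R\int_{\partial B_R}\big(\frac12|\nabla u|^2+W(u)\big)\,d\mathcal{H}^1$ produces the differential identity
\begin{equation*}
\Phi'(R)=\frac1R\int_{\partial B_R}|\partial_r u|^2\,d\mathcal{H}^1+\frac1{R^2}\int_{B_R}\Big(W(u)-\tfrac12|\nabla u|^2\Big)\,dz ,
\end{equation*}
whose first term is nonnegative. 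Writing $D(R):=\int_{B_R}\big(W(u)-\frac12|\nabla u|^2\big)\,dz$ for the equipartition defect, I would next establish a bound $|D(R)|\le CR^{1-\gamma}$ for some $\gamma>0$, using the minimality of $u$ together with the crude linear bounds $cR\le E(u,B_R)\le CR$ (the upper one from a comparison competitor, the lower one from density estimates as in Proposition \ref{prop: local uniform convergence}). Then $\widetilde\Phi(R):=\Phi(R)-\int_R^\infty s^{-2}D(s)\,ds$ is well defined, bounded, and nondecreasing because $\widetilde\Phi'(R)=\frac1R\int_{\partial B_R}|\partial_r u|^2\,d\mathcal{H}^1\ge0$; hence $\Phi(R)\to\kappa$ as $R\to\infty$ for some $\kappa\ge0$, so that $E(u,B_R)=\kappa R+o(R)$. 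Matching with \eqref{ene conv: general} along a blowdown subsequence $r_{j_k}$, for which $\frac1\rho E_{r_{j_k}}(u_{r_{j_k}},B_\rho)=\Phi(\rho\, r_{j_k})$, identifies $\kappa=E_0^*(u_0,B_1)$ and, more generally, $E_0^*(u_0,B_\rho)=\kappa\rho$ for all $\rho>0$.

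Granting the above, the homogeneity of $u_0$ is the rigidity case of the monotonicity formula in the limit: $u_0$ is a minimizing partition map (Proposition \ref{prop: conv of blowdown}), so its interface is a stationary $1$-varifold in $\BR^2$ whose mass in $B_\rho$ equals $\kappa\rho/\sigma$, i.e.\ is exactly linear in $\rho$; by the monotonicity formula for stationary $1$-varifolds this forces the interface to be a cone, so $u_0(z)=u_0(z/|z|)$. For the annular estimates, integrating the differential identity over $[\lambda_1R,\lambda_2R]$ gives
\begin{equation*}
\frac1{\lambda_2R}\int_{B_{\lambda_2R}\setminus B_{\lambda_1R}}|\partial_r u|^2\,dz\ \le\ \int_{\lambda_1R}^{\lambda_2R}\frac1s\int_{\partial B_s}|\partial_r u|^2\,d\mathcal{H}^1\,ds\ =\ \widetilde\Phi(\lambda_2R)-\widetilde\Phi(\lambda_1R)\ \xrightarrow[R\to\infty]{}\ 0 ,
\end{equation*}
which is the first annular limit.

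For the energy equipartition \eqref{eq: energy limit} and the second annular limit I would slice $u$ over the circles $\partial B_\rho$, $1<\rho<R$. By the uniform convergence of the blowdowns off $\mathrm{supp}\,\nabla u_0$ (Proposition \ref{prop: local uniform convergence}) and the homogeneity just proved, for a.e.\ such $\rho$ the restriction $u|_{\partial B_\rho}$ performs at least $n:=\kappa/\sigma$ phase transitions between the wells. Lemma \ref{lemma: 1D energy estimate} bounds the combined tangential-plus-potential energy of each transition from below by $\sigma-C\delta^2$, and since a near-optimal transition is $H^1$-close to a translate of the relevant heteroclinic $U_{ij}$, which satisfies the virial identity $\int_{\BR}\tfrac12|U_{ij}'|^2=\int_{\BR}W(U_{ij})=\tfrac\sigma2$, the tangential-kinetic part and the potential part of each transition are individually $\tfrac\sigma2+o(1)$, with $\int_{\partial B_\rho}\big|W(u)-\tfrac12|\partial_T u|^2\big|\,d\mathcal{H}^1=o(1)$ as $\rho\to\infty$. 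Integrating over $\rho$ yields $\int_{B_R}W(u)\,dz\ge\frac\kappa2R-o(R)$ and $\int_{B_R}\tfrac12|\partial_T u|^2\,dz\ge\frac\kappa2R-o(R)$, whereas $\int_{B_R}\big(\tfrac12|\partial_r u|^2+\tfrac12|\partial_T u|^2+W(u)\big)\,dz=\kappa R+o(R)$ from the monotonicity argument above; since $\int_{B_R}\tfrac12|\partial_r u|^2\ge0$, all three lower bounds must be equalities up to $o(R)$, which is precisely \eqref{eq: energy limit}. The second annular limit then follows from $\big|W(u)-\tfrac12|\nabla u|^2\big|\le\big|W(u)-\tfrac12|\partial_T u|^2\big|+\tfrac12|\partial_r u|^2$ together with the first annular limit.

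The step I expect to be the genuine obstacle is the bound $|D(R)|\le CR^{1-\gamma}$ on the equipartition defect: the bare Pohozaev identity holds for every solution of \eqref{eq: AC system}, and it is exactly here that minimality must enter decisively, in order to turn the formal identity into an effective almost-monotonicity and thereby promote the subsequential convergence \eqref{ene conv: general} to the sharp asymptotics $E(u,B_R)=\kappa R+o(R)$ along the full sequence $R\to\infty$. Once that is available, the homogeneity of $u_0$ and the remaining identities follow comparatively softly.
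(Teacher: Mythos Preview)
The paper does not supply its own proof of this proposition; it is quoted from \cite{ss2024} (their Lemmas 3.4, 3.5 and Theorem 3.6) as a preliminary result. The brief description given in the text---a Pohozaev identity combined with energy upper and lower bounds yields an asymptotic monotonicity formula and equipartition, which then force the homogeneity of $u_0$---matches your plan at the architectural level, and the differential identity $\Phi'(R)=\frac1R\int_{\partial B_R}|\partial_r u|^2\,d\mathcal{H}^1+R^{-2}D(R)$ you derive is correct.

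You also correctly isolate the real difficulty: controlling the equipartition defect $D(R)$. However, your proposed route---deducing $|D(R)|\le CR^{1-\gamma}$ from the crude linear bounds $cR\le E(u,B_R)\le CR$ alone---cannot work as stated: those bounds give only $|D(R)|\le CR$, and then the correction integral $\int_R^\infty s^{-2}D(s)\,ds$ in your definition of $\widetilde\Phi$ diverges. What is actually needed (and what \cite{ss2024} supplies) is a sharper competitor-based upper bound paired with the slicewise lower bound from Lemma \ref{lemma: 1D energy estimate}, giving enough control on the boundary integrand $\int_{\partial B_R}(\tfrac12|\partial_T u|^2+W(u))\,d\mathcal{H}^1$ to make $\Phi$ asymptotically monotone directly. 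Your slicing argument for the strong equipartition $\int_{\partial B_\rho}|W(u)-\tfrac12|\partial_T u|^2|\,d\mathcal{H}^1=o(1)$ also relies on an unstated rigidity step: that a near-minimal transition along $\partial B_\rho$ is $C^1$-close to a translate of some $U_{ij}$, so that the pointwise 1D identity $\tfrac12|U_{ij}'|^2=W(U_{ij})$ transfers. This is true, but it requires a compactness/uniqueness argument in the spirit of Proposition \ref{prop: 1D result from Schatzman}, not merely the energy lower bound of Lemma \ref{lemma: 1D energy estimate}.
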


Proposition \ref{prop: conv of blowdown} and \ref{prop: homogeneity of u0} together imply the following theorem. 
\begin{theorem}[Theorem 1.1 in \cite{ss2024}]\label{thm: triple junction existence}
    There exists a minimizing entire solution $u:\BR^2\ri\BR^2$ to \eqref{eq: AC system} such that for any compact set $K\subset \BR^2$, 
    \begin{equation}\label{L1 conv with rotation}
        \lim\limits_{R\ri\infty} \left(\inf\limits_{\theta}\|u_R(z)-u_{\mathcal{P}}(G_\theta z)\|_{L^1(K;\BR^2)}\right)
    \end{equation}
    where $u_R=u(Rz)$ is the blowdown of $u$, $u_{\mathcal{P}}$ is the triple junction map defined in \eqref{form of triple junction sol} and $G_\theta$ denotes the rotation through an angle $\theta$ about the origin. 
\end{theorem}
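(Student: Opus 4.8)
The plan is to produce $u$ as a local limit of energy minimizers on an exhausting family of balls whose boundary data already carries a ``triple-junction topology'', and then to identify the blow-down limit with the help of the compactness and homogeneity results of this section. First I would fix a bounded smooth map $\bar u:\BR^2\ri\BR^2$ that equals $a_i$ on the sector $\mathcal{D}_i$ outside a unit neighborhood of $\pa\mathcal{P}$ and that, across each ray $\pa\mathcal{D}_i\cap\pa\mathcal{D}_j$, interpolates through the heteroclinic $U_{ij}$ in the normal variable; for large $\rho$ the tangential energy of $\bar u$ on $\pa B_\rho$ is $\sum_{i<j}\sigma_{ij}+o(1)=3\sigma+o(1)$ and its radial energy is $O(1)$, so $E(\bar u,B_R)\le 3\sigma R+C$. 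Let $u_R$ minimize $E(\cdot,B_R)$ among $W^{1,2}(B_R;\BR^2)$ maps equal to $\bar u$ on $\pa B_R$; the coercivity condition in (H1) gives, by truncation, $\|u_R\|_{L^\infty}\le M$ uniformly, so by interior Schauder estimates of type \eqref{reg of u} the family $\{u_R\}$ is bounded in $C^{2,\alpha}_{loc}$ and a subsequence converges, $u_{R_k}\ri u$ in $C^2_{loc}(\BR^2)$. Then $u$ is a bounded solution of \eqref{eq: AC system}, and it is minimizing on compact sets: given $K\Subset\BR^2$ and a competitor $v=u$ off $K$, for large $k$ one glues $v$ to $u_{R_k}$ on a thin shell around $\pa K$ (the shell energy tends to $0$ since $u_{R_k}\ri u$ uniformly there), producing an admissible competitor for the $B_{R_k}$-problem, and minimality of $u_{R_k}$ together with $k\ri\infty$ gives $E(u,K)\le E(v,K)$.

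The next step is the sharp two-sided estimate $3\sigma r-C\le E(u,B_r)\le 3\sigma r+C$. The upper bound descends from $E(u_R,B_R)\le 3\sigma R+C$ by passing to the limit, using an almost-monotonicity formula for $r\mapsto r^{-1}E(u_R,B_r)$ of the Pohozaev type behind Proposition \ref{prop: homogeneity of u0}. For the lower bound, for a.e.\ $\rho$ the slice $u|_{\pa B_\rho}$ lies in $W^{1,2}$; a clearing-out/density argument in the spirit of Proposition \ref{prop: local uniform convergence} (and Lemma \ref{lemma: maximum principle}) shows that on the arc of $\pa B_\rho$ sitting inside $\mathcal{D}_i$ and away from $\pa\mathcal{P}$ the map $u$ stays within $\delta(\rho)$ of $a_i$ with $\delta(\rho)\ri0$, so $u|_{\pa B_\rho}$ performs at least three transitions among $a_1,a_2,a_3$, and Lemma \ref{lemma: 1D energy estimate} applied on those arcs bounds the tangential energy on $\pa B_\rho$ below by $3\sigma-C\delta(\rho)^2$; integrating in $\rho$ and invoking almost-monotonicity again yields $E(u,B_r)\ge 3\sigma r-C$. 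The delicate point here — and, I expect, the main obstacle — is the no-collapse claim underpinning both the density estimate and the three-transition count: that the three phases forced on $\pa B_R$ genuinely survive at all interior radii rather than being annihilated, equivalently that no blow-down degenerates to a single phase or to a two-phase line. Establishing this is exactly where one needs the vector-valued Caffarelli--C\'ordoba density estimates \cite{AF3} together with a quantitative energy-comparison argument, and it is the technical heart of \cite{ss2024} and \cite{alikakos2024triple}.

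Finally I would apply Proposition \ref{prop: conv of blowdown} along $r_j=j$: a subsequence of blow-downs converges in $L^1_{loc}$ to a minimizing partition map $u_0$, and Proposition \ref{prop: homogeneity of u0} makes $u_0$ $0$-homogeneous, $u_0=v_0(z/|z|)$ with $v_0:S^1\ri\{a_1,a_2,a_3\}$. By \eqref{ene conv: general} and the energy bounds, $E_0^*(u_0,B_1)=\lim_{R\ri\infty}R^{-1}E(u,B_R)=3\sigma$; since a partition cone has $E_0^*(\cdot,B_1)=\sigma$ times the number of jumps of $v_0$ on $S^1$, there are exactly three jumps, hence three sectors realizing all three values. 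A $0$-homogeneous minimizing three-phase partition map in the plane with equal tensions $\sigma$ is necessarily equiangular: the first variation at the junction forces the three interface unit tangents to sum to zero, and any non-equiangular three-ray cone is strictly improved by a Steiner configuration. Therefore $u_0=u_{\mathcal{P}}(G_\theta\,\cdot)$ for some rotation $G_\theta$, and since every subsequential blow-down is such a rotated copy of $u_{\mathcal{P}}$, a standard subsequence argument upgrades this to the full limit \eqref{L1 conv with rotation}, the $\inf_\theta$ absorbing the one undetermined rotation.
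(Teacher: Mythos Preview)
The paper does not prove this theorem here; it is quoted from \cite{ss2024}, and the only argument offered is the sentence preceding the statement: Propositions~\ref{prop: conv of blowdown} and~\ref{prop: homogeneity of u0} together imply it. Your final paragraph is exactly that argument and is correct: once one has a minimizing entire $u$ with $R^{-1}E(u,B_R)\to 3\sigma$, those two propositions give a $0$-homogeneous minimizing partition limit $u_0$ with $E_0^*(u_0,B_1)=3\sigma$, hence three rays, hence (equal tensions, first variation at the node) an equiangular triod up to rotation, and the subsequence argument upgrades this to \eqref{L1 conv with rotation}.

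The preceding two paragraphs, where you try to supply what the paper outsources to \cite{ss2024,alikakos2024triple}, contain a real gap. First, the sharp bound $3\sigma r-C\le E(u,B_r)\le 3\sigma r+C$ is Lemma~\ref{lemma: sharp bdd on BR} of this paper, proved only \emph{after} Theorems~\ref{thm: triple junction existence} and~\ref{thm: uniqueness} and the localization of $\Gamma_\delta$ from Proposition~\ref{prop: appr uR} are already in hand; at the existence stage only the density-one statement is accessible, and only its upper half comes cheaply from the competitor $\bar u$. Second, and more seriously, your lower-bound argument is circular as written: you assert that on $\pa B_\rho\cap\mathcal D_i$ the limit $u$ is $\delta(\rho)$-close to the \emph{specific} well $a_i$, but clearing-out only says $u$ is close to \emph{some} well, and the limit $u$ carries no boundary condition tying phases to the sectors $\mathcal D_i$. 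That information lives in the approximants $u_R$ through $u_R|_{\pa B_R}=\bar u$, and the substantive work in \cite{ss2024,alikakos2024triple} is precisely to push it inward uniformly in $R$ and to exclude collapse to one or two phases in the limit. You correctly flag this as ``the delicate point,'' but the surrounding text presents the three-transition count on $\pa B_\rho$ for $u$ as already established, which it is not.
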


Equivalently, this theorem tells that along any sequence $R_i\ri\infty$, there is a subsequence, still denoted by $\{R_i\}$, such that $u_{R_i}$ converges in $L^1_{loc}$ to a triple junction map which may depend on the sequence. A comparable result has been obtained in \cite{alikakos2024triple} by N. Alikakos and the author around the same time with a different method. 

Finally, we mention the following uniqueness result of the blow-down limit which is proved by the author in \cite{geng2024uniqueness}, thereby eliminating the possible rotation $G_\theta$ in \eqref{L1 conv with rotation}.

\begin{theorem}\label{thm: uniqueness}
    For the entire solution $u$ in Theorem \ref{thm: triple junction existence}, there exists a unique triple junction map $u_{\mathcal{P}}$, such that
    \begin{equation}\label{L1 conv}
        \lim\limits_{R\ri\infty}\|u_R-u_{\mathcal{P}}\|_{L^1_{loc}(\BR^2)}=0.
    \end{equation}
\end{theorem}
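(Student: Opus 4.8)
\medskip
\noindent\emph{Proof idea.}
The plan is to upgrade the subsequential convergence of Theorem~\ref{thm: triple junction existence} to convergence along all $R\to\infty$ by showing that the orientation of the limiting triple junction is uniquely determined. By Theorem~\ref{thm: triple junction existence}, every subsequential $L^1_{loc}$-limit of $\{u_R\}$ equals $u_{\mathcal P}\circ G_\theta$ for some $\theta$ in the circle $\BR/\tfrac{2\pi}{3}\mathbb{Z}$. I would write $\mathcal L$ for the set of all such limits and identify it with the corresponding set $I$ of orientations. Since $R\mapsto u_R$ is continuous into $L^1_{loc}$ and precompact (Proposition~\ref{prop: conv of blowdown}), $\mathcal L$ is a nonempty compact connected set, being the $\omega$-limit set of a continuous curve; hence $I$ is either a single point or a nondegenerate closed arc. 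Moreover, since distinct triple-junction maps are $L^1_{loc}$-separated (and quantitatively $\|u_{\mathcal P}\circ G_\theta-u_{\mathcal P}\circ G_{\theta'}\|_{L^1(B_1)}\sim\dist(\theta,\theta')$ near the diagonal), for $R$ large $u_R$ lies $L^1(B_1)$-close to exactly one $u_{\mathcal P}\circ G_{\theta(R)}$ with $\theta(R)\in I$, the function $\theta(\cdot)$ is continuous on some $[R_*,\infty)$, and its $\omega$-limit set is $I$. It then suffices to exclude the arc case.

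The heart is a quantitative energy rigidity for the orientation. First I would establish the sharp bounds $3\sigma R-C\le E(u,B_R)\le 3\sigma R+C$ for $R$ large: the lower bound by slicing $B_R$ with the circles $\partial B_\rho$ and applying Lemma~\ref{lemma: 1D energy estimate} to each slice (which carries at least three widely separated phase transitions once $\rho$ is large); the upper bound by comparing $u$ on $B_R$ with the competitor obtained by placing optimal heteroclinic profiles $U_{ij}$ along the Steiner tree joining the three transition points of $u|_{\partial B_R}$ — admissible up to an $O(1)$ collar adjustment thanks to the density/clearing-out estimates of Proposition~\ref{prop: local uniform convergence} — whose length is at most $3R$, the total length of the three radii to those points. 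The quantitative refinement is that in an annulus $B_\rho\setminus B_R$ with $\rho>R\ge R_*$, the diffuse interface consists of three connected curves running from $\partial B_R$, which they meet at angles $\approx\theta(R)+c_j$ with $c_j\in\{0,\tfrac{2\pi}{3},\tfrac{4\pi}{3}\}$, to $\partial B_\rho$, which they meet at angles $\approx\theta(\rho)+c_j$; by elementary geometry each such curve has length at least
\[
\sqrt{(\rho-R)^2+4R\rho\,\sin^2\!\big(\tfrac12\dist(\theta(\rho),\theta(R))\big)}\ \ge\ (\rho-R)+c_0\,R\,\dist(\theta(\rho),\theta(R))^2,
\]
and since the energy in the annulus controls $\sigma$ times the total length of the diffuse interface up to an $O(1)$ error, this gives
\[
E(u,B_\rho)\ \ge\ E(u,B_R)+3\sigma(\rho-R)+c_0\sigma R\,\dist(\theta(\rho),\theta(R))^2-C.
\]

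To close, I would argue by contradiction: suppose $I$ is a nondegenerate arc and pick $\theta_a\neq\theta_b\in I$ with $\delta_0:=\dist(\theta_a,\theta_b)>0$. Since the $\omega$-limit set of $\theta(\cdot)$ is $I$, there are $R_k\to\infty$ with $\theta(R_k)\to\theta_a$ and, for each $k$, some scale $\rho_k>R_k$ with $\theta(\rho_k)$ within $\delta_0/10$ of $\theta_b$, so $\dist(\theta(\rho_k),\theta(R_k))\ge\delta_0/2$ for $k$ large. Combining the last display with the sharp bounds,
\[
3\sigma\rho_k+C\ \ge\ E(u,B_{\rho_k})\ \ge\ (3\sigma R_k-C)+3\sigma(\rho_k-R_k)+\tfrac14 c_0\sigma\delta_0^2 R_k-C\ =\ 3\sigma\rho_k+\tfrac14 c_0\sigma\delta_0^2 R_k-2C,
\]
which forces $R_k\le 12C/(c_0\sigma\delta_0^2)$, contradicting $R_k\to\infty$. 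Hence $I=\{\theta_\infty\}$ is a single point, and relabelling $u_{\mathcal P}:=u_{\mathcal P}\circ G_{\theta_\infty}$ yields $\|u_R-u_{\mathcal P}\|_{L^1_{loc}(\BR^2)}\to 0$.

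I expect the genuine difficulty to lie in the two quantitative inputs of the middle paragraph: (i) the $O(1)$ — rather than merely $o(R)$ — sharpness of the energy bounds, which rests on the density estimates and a careful competitor construction with controlled collar cost; and (ii) showing that inside a large annulus the diffuse interface really is three connected curves whose total length the Allen--Cahn energy sees correctly, with an error $O(1)$ \emph{uniform in} $\rho$ — this is exactly where the $O(R^\beta)$-localization of the diffuse interface, the connectivity of the diffuse interface (Fusco), and the Caffarelli--C\'ordoba-type density estimates enter. A variant bypassing the arc/point dichotomy would instead derive a {\L}ojasiewicz--Simon inequality for the reduced cylindrical energy in the orientation variable, which would directly produce a decay rate making $\theta(\rho)$ Cauchy.
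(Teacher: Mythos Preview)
The paper does not prove this theorem here; it is quoted from \cite{geng2024uniqueness}, with the essential ingredients recorded in Proposition~\ref{prop: appr uR}. Your overall architecture---energy bounds yield interface localization, which yields control on the angular drift between scales, which sums to give convergence of the orientation---is exactly the strategy of that proof. The difference is in the exponents, and that is where your argument has a real gap.

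You propose to first establish the sharp bound $|E(u,B_R)-3\sigma R|\le C$ and only then deduce uniqueness. But in the present paper the sharp $O(1)$ bound (Lemma~\ref{lemma: sharp bdd on BR}) is proved \emph{after} uniqueness, via Lemma~\ref{lemma:unif exp decay}, which in turn rests on Proposition~\ref{prop: appr uR}, i.e.\ on the output of \cite{geng2024uniqueness}. Concretely: your lower bound by circular slicing needs, on each $\partial B_\rho$, that $u$ is $\delta$-close to the three phases with $\delta=\delta(\rho)$ decaying fast enough that $\int C\delta(\rho)^2\,d\rho<\infty$; and your competitor for the upper bound needs an $O(1)$ collar, hence an $O(1)$ bound on the width of the transition arcs on $\partial B_R$. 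Neither is available before any orientation control---clearing-out and density estimates give only polynomial localization at that stage. This is precisely why Proposition~\ref{prop: appr uR}(1) records only $|E(u,B_R)-\sigma\mathcal H^1(T_R)|\le CR^{\alpha}$ with some $\alpha\in(0,1)$. From that weaker bound one gets localization of width $O(R^{(1+\alpha)/2})$ (Proposition~\ref{prop: appr uR}(2)) and then $|\theta_R-\theta_{2R}|\le CR^{(\alpha-1)/2}$ (Proposition~\ref{prop: appr uR}(3)), which is summable over dyadic scales and yields uniqueness. Your displayed inequality with the $c_0\sigma R\,\dist(\theta(\rho),\theta(R))^2$ penalty is the $\alpha=0$ version of this, but $\alpha=0$ is obtained only \emph{after} the bootstrap (that is the content of Lemma~\ref{lemma: diffuse interface R1/2}).

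Your point (ii)---that in a large annulus the diffuse interface is three connected curves whose length the energy sees to within $O(1)$ uniformly in $\rho$---is likewise not available a priori; Fusco's connectivity is qualitative and does not deliver a uniform $O(1)$ length--energy comparison. The paper bypasses this entirely: the excess energy in an annulus is captured by splitting into tangential and radial pieces and applying Lemma~\ref{lemma: 1D energy estimate} on radial slices, as in \eqref{ene est: rad and tang}--\eqref{est: lower bdd on annulus R1 R2}, never by treating the interface as a curve. If you rewrite your scheme with the $O(R^{\alpha})$ error throughout and replace the curve-length step by tangential/radial slicing, you recover exactly the approach of \cite{geng2024uniqueness}; your {\L}ojasiewicz--Simon variant is a plausible alternative route, but it is not the one taken.
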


\section{Proof of Theorem \ref{thm: rigidity triple junction}}\label{sec: proof}

Throughout this section, $u: \BR^2\ri\BR^2$ denotes a minimizing  solution of \eqref{eq: AC system} satisfying Theorem \ref{thm: triple junction existence} and \ref{thm: uniqueness}. The minimizing partition $\mathcal{P}$ and its associated triple junction map $u_{\mathcal{P}}$ are given by \eqref{partition p} and \eqref{form of triple junction sol} respectively. To prove Theorem \ref{thm: rigidity triple junction}, we focus on the interface $\pa \mathcal{D}_1\cap\pa \mathcal{D}_3=\{(x,0):x>0\}$. The proof is identical for the other two interfaces. In this setting, we pick
\begin{equation*}
        \mathbf{e}_{13}:=(1,0) \text{ is the directional unit vector of the interface},\quad\mathbf{e}_{13}^\perp:=(0,1). 
\end{equation*}
By the hypothesis (H2), there exists a unique (up to a translation) minimizing heteroclinic connection $U_{31}$,  such that $U_{31}(-\infty)=a_3,\, U_{31}(+\infty)=a_1$. For convenience, in the rest of the paper we drop the subscript and simply write
\begin{equation*}
    U=U_{31}.
\end{equation*}

Define the diffuse interface of $u$ as
\begin{equation}\label{def: diffuse interface}
    \Gamma_\delta:= \{z\in \BR^2: \min\limits_{i=1,2,3}|u(z)-a_i|\geq \d\},\ \  \delta>0.
\end{equation}

\subsection{Localization of the diffuse interface}

From Theorem \ref{thm: uniqueness}, we know $u$ can be approximated by $u_{\mathcal{P}}$ at large scales. Next we invoke a more quantified version of this approximation from \cite{geng2024uniqueness}, which states that the diffuse interface is located in a small neighborhood of $\pa\mathcal{P}$. 

\begin{proposition}\label{prop: appr uR}
There exist $R_0$, $C$, $\alpha\in(0,1)$, $K$, $k$ depending on $W$ and $u$, such that for every $R>R_0$, the following hold:
\begin{enumerate}
    \item There are points $O_R\in B_R$, $D^1_R,D^2_R,D^3_R\in\pa B_R$ such that the line segments $O_RD^1_R$, $O_RD^2_R$, $O_RD^3_R$ form a $\f{2\pi}{3}$ angle pairwisely, and the following energy estimate holds:
    \begin{equation}\label{energy bdd on BR}
        \left|E(u, B_R)-\sigma \mathcal{H}^1(T_R)\right|\leq CR^\alpha,
    \end{equation}
    where $T_R$ denotes $O_RD^1_R\cup O_RD^2_R\cup O_RD^3_R$. 
    \item The diffuse interface locates in an $O(R^{\f{\al+1}{2}})$ neighborhood of $T_R$, i.e.
    \begin{equation}\label{diff interface loc}
        \Gamma_\delta\cap B_R \subset \{z\in B_R: \dist(z,T_R)\leq C R^{\f{\al+1}{2}}\}.
    \end{equation}
    Moreover, 
    \begin{equation}\label{exp decay: TR}
        \min\limits_{i=1,2,3}|u(z)-a_i|\leq Ke^{-k(\dist(z,T_R)-CR^{\f{\al+1}{2}})^+},\ \forall  z\in B_{\f34 R}, 
    \end{equation}
    where $(a)^+=\max\{a,0\}$.
    \item Let $O_RD^1_R$ be the approximated $a_1$-$a_3$ interface at the scale $R$, and let $\theta_R$ denote the angle between $O_RD^1_R$ and the direction $\mathbf{e}_{13}$. Then 
    \begin{equation}
  \label{angle diff} 
        |\theta_R-\theta_{2R}|\leq CR^{\f{\al-1}{2}}.
    \end{equation}
\end{enumerate}
\begin{proof}
    All the proofs can be found in \cite{geng2024uniqueness}, and we therefore omit the details here. Specifically, (1) follows directly from Section 4, Proposition 5.1 and Proposition 5.3 in \cite{geng2024uniqueness}, while (2) and (3) correspond to Proposition 6.1 and Lemma 8.1, respectively.  
\end{proof}

\end{proposition}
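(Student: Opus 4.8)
\emph{Plan.} These three statements are from \cite{geng2024uniqueness}; what follows indicates how one establishes them from the tools assembled above. The qualitative input is Theorem \ref{thm: uniqueness} together with Proposition \ref{prop: local uniform convergence}: the blowdowns $u_R$ converge in $L^1_{loc}$ to $u_{\mathcal P}$ and uniformly away from the three rays $\partial\mathcal P$. A vector-valued density/clearing-out estimate in the spirit of \cite{AF3} and \cite[Proposition 5.3]{afs-book} then confines $\Gamma_\delta\cap B_R$ to an $o(R)$-neighbourhood of $\partial\mathcal P$, and shows that for $\rho$ in a range $(\rho_*,R)$ with $\rho_*=o(R)$ the slice $\Gamma_\delta\cap\partial B_\rho$ consists of exactly three short arcs, each near one point of $\partial\mathcal P\cap\partial B_\rho$. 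One then takes $D_R^i$ to be a point of the $i$-th arc on $\partial B_R$ and $O_R$ to be the Fermat point of $\{D_R^1,D_R^2,D_R^3\}$; for a near-equilateral triple this point is interior, the three segments $O_RD_R^i$ meet pairwise at $2\pi/3$, and $\mathcal H^1(T_R)=3R+o(R)$.

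\emph{Energy estimate \eqref{energy bdd on BR}.} For the upper bound, compare $u$ on $B_R$ with the competitor that equals $u$ on a unit annulus inside $\partial B_R$ — where, by the decay, $u$ is already near the phases outside three arcs of length $o(R)$ — and in the interior equals the model map obtained by inserting the one-dimensional profiles $U_{ij}$ transversally across the three segments of $T_R$ and setting the value $a_i$ in the bulk of each sector; its energy is $\sigma\mathcal H^1(T_R)+o(R)$, so minimality gives $E(u,B_R)\le\sigma\mathcal H^1(T_R)+CR^\alpha$ for a suitable $\alpha\in(0,1)$. For the lower bound, slice by the circles $\partial B_\rho(O_R)$: for a.e.\ $\rho\in(\rho_*,R)$ the curve $u|_{\partial B_\rho}$ runs in cyclic order through $\delta$-neighbourhoods of $a_1,a_2,a_3$, so Modica's inequality $\tfrac12|\partial_Tu|^2+W(u)\ge\sqrt{2W(u)}\,|\partial_Tu|$ together with the fact — built into (H2), since the minimizing heteroclinic satisfies the equipartition identity — that $\sigma$ is the geodesic distance between two wells in the degenerate metric $\sqrt{2W}\,|\cdot|$, shows the tangential energy on $\partial B_\rho$ is at least $3\sigma$ minus an error measuring how far $u|_{\partial B_\rho}$ stays from the wells; feeding in the exponential decay controls this error, and integrating in $\rho$ (adding the nonnegative core contribution) gives $E(u,B_R)\ge 3\sigma R-o(R)$, sharpened to $E(u,B_R)\ge\sigma\mathcal H^1(T_R)-CR^\alpha$ using the almost-monotonicity/Pohozaev identity behind Proposition \ref{prop: homogeneity of u0}. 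Combining the two bounds yields \eqref{energy bdd on BR}.

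\emph{Localization, decay and angle \eqref{diff interface loc}--\eqref{angle diff}.} Given \eqref{energy bdd on BR}, if $z_0\in\Gamma_\delta\cap B_{3R/4}$ has $\dist(z_0,T_R)=d$, the density estimate forces energy comparable to $d$ into $B_d(z_0)$ beyond the $\sigma\mathcal H^1(T_R)$ already accounted for along $T_R$; more precisely, for $d\lesssim R$ this surplus is quadratic, of order $d^2/R$, because a diffuse interface joining the core to $\partial B_R$ can deviate from $T_R$ only by bending, and a bend of amplitude $d$ over length $\sim R$ costs extra length $\sim d^2/R$. Comparing with the $CR^\alpha$ error gives $d^2/R\lesssim R^\alpha$, i.e.\ \eqref{diff interface loc}. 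Outside this tube $\min_i|u(z)-a_i|<\delta$ on a ball of radius comparable to $\dist(z,T_R)$, and iterating the variational maximum principle Lemma \ref{lemma: maximum principle} along a chain of such balls — or constructing an exponential barrier for $\Delta u-W_u(u)=0$ from the coercivity $\xi^TW_{uu}(a_i)\xi\ge c_1|\xi|^2$ of (H1) — yields \eqref{exp decay: TR}. Finally, using \eqref{energy bdd on BR} and \eqref{diff interface loc} at the scales $R$ and $2R$: inside $B_R$ the segment $O_RD_R^1$ and the part of $O_{2R}D_{2R}^1$ both lie in the $O(R^{(\alpha+1)/2})$-tube about $\Gamma_\delta$, hence within $O(R^{(\alpha+1)/2})$ of each other, and two segments of length $\sim R$ that stay that close differ in direction by $O(R^{(\alpha-1)/2})$, which is \eqref{angle diff} (equivalently, a bend of angle $\vartheta$ between the two scales costs $\sim\vartheta^2R$ extra length, and $\vartheta^2R\lesssim R^\alpha$).

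\emph{Main obstacle.} The crux is the sharp \emph{lower} bound in \eqref{energy bdd on BR} together with the quadratic penalty ``surplus energy $\gtrsim d^2/R$'' behind \eqref{diff interface loc}: one must show that, up to an error of strictly lower order in $R$, all of the energy is carried by a thin tube around a straight tripod, and that departure from straightness is penalised quadratically — this is precisely what converts the $R^\alpha$ energy error into the $R^{(\alpha+1)/2}$ localization. Controlling the triple-junction core near $O_R$, where the three-phase slicing degenerates and the one-dimensional comparison is unavailable, is the delicate point and is what forces the $O(R^\alpha)$, rather than $O(1)$, error at this stage.
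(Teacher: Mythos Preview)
The paper's own ``proof'' is just a citation to \cite{geng2024uniqueness}, so there is little to compare against line by line. Your sketch is in the right spirit and lines up well with the techniques the paper itself deploys later (the competitor constructions in Lemma~\ref{lemma: sharp bdd on BR} and Lemma~\ref{lemma: ene bdd on SR 2}, the tangential--radial energy split in Lemma~\ref{lemma: diffuse interface R1/2}, the maximum-principle decay). The geometric picture --- Fermat point, three short arcs, quadratic bending cost $d^2/R$, angle comparison across scales --- is all correct at the heuristic level.

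There is, however, one genuine gap. You pass twice from an $o(R)$ error to an $O(R^\alpha)$ error with $\alpha\in(0,1)$: once in the upper bound (``so minimality gives $E(u,B_R)\le\sigma\mathcal H^1(T_R)+CR^\alpha$'') and once in the lower bound (``sharpened \dots\ using the almost-monotonicity/Pohozaev identity''). Neither step is justified as written: not every $o(R)$ is $O(R^\alpha)$, and the Pohozaev/monotonicity machinery in Proposition~\ref{prop: homogeneity of u0} gives only limits, no rate. The actual mechanism in \cite{geng2024uniqueness} is an \emph{iterative bootstrap}: a localization of width $R^\beta$ feeds into an energy error $O(R^{\alpha'})$ via the competitor and slicing arguments, which in turn improves the localization to width $R^{(\alpha'+1)/2}$, and one iterates across dyadic scales until a fixed exponent stabilizes. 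Your ``Main obstacle'' paragraph correctly identifies the core as the delicate spot, but misattributes the source of the $R^\alpha$: it is the iteration, not the core, that produces the definite power.

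A smaller point: your $d^2/R$ surplus is the right order, but the rigorous version (compare the claim in the proof of Lemma~\ref{lemma: diffuse interface R1/2}) does not go through ``extra length of a bent curve''. Instead one splits the potential as $\cos^2\varphi\,W+\sin^2\varphi\,W$, pairs the pieces with the radial and tangential gradients respectively, and optimizes over $\varphi$; this is what converts a transversal displacement $d$ over radial extent $R$ into a lower bound $\sqrt{(3\sigma R)^2+(\sigma d)^2}-3\sigma R\sim \sigma d^2/R$ and makes the heuristic precise for the diffuse (not sharp) interface.
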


Now we are in the position to derive the pointwise estimate of the distance of $u(z)$ to energy wells away from the $a_1$-$a_3$ sharp interface $\{(x,0):x>0\}$. 

\begin{lemma}\label{lemma:unif exp decay}
    There exist $\beta\in (\f12,1)$ and positive constants $R_0, C, K, k$ only depending on $W$ and $u$, such that for any $x\geq R_0$,
    \begin{align}
   \label{est: close to a1}         |u(x,y)-a_1|&\leq Ke^{-k(y-Cx^\beta)},\quad y\geq Cx^\beta,\\
   \label{est: close to a3}         |u(x,y)-a_3|&\leq Ke^{-k(|y|-Cx^\beta)},\quad y\leq -Cx^\beta,\\
   \label{est: grad}         |\na u(x,y)|&\leq  Ke^{-k(y-Cx^\beta)},\quad |y|\geq Cx^\beta.
    \end{align}
\end{lemma}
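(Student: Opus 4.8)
\textbf{Proof plan for Lemma \ref{lemma:unif exp decay}.}

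The plan is to transfer the scale-$R$ information of Proposition \ref{prop: appr uR} into a scale-independent pointwise statement on the right half-plane, by covering the region $\{x \geq R_0\}$ with dyadic annuli $B_{2^{j+1}}\setminus B_{2^j}$ and patching together the estimates \eqref{diff interface loc}--\eqref{exp decay: TR} on each. The first step is to control the geometry: for a point $z=(x,y)$ with $x$ large, pick $R \sim x$ (say $R$ the smallest dyadic number $\geq 2x$, so that $z \in B_{3R/4}$) and compare the approximate interface $O_RD^1_R$ with the true asymptotic direction $\mathbf{e}_{13}=(1,0)$. Summing the angle increments \eqref{angle diff} over the dyadic scales between $R_0$ and $R$ gives $|\theta_R - \theta_\infty| \leq C R^{(\al-1)/2}$ for some limiting angle $\theta_\infty$; a separate (standard, and presumably already in \cite{geng2024uniqueness}) argument using the uniqueness Theorem \ref{thm: uniqueness} identifies $\theta_\infty = 0$ and pins the vertex $O_R$ within $O(R^{(\al+1)/2})$ of the origin. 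Consequently the segment $O_RD^1_R$ deviates from the positive $x$-axis by at most $C R^{(\al+1)/2}$ on all of $B_R$, and the same bound (enlarging $C$) holds for the distance from $z$ to the \emph{other} two segments $O_RD^2_R, O_RD^3_R$ being comparable to $|y|$ up to $O(R^{(\al+1)/2})=O(x^{(\al+1)/2})$. Setting $\beta := \frac{\al+1}{2} \in (\tfrac12,1)$ and feeding $\dist(z,T_R) \geq |y| - Cx^\beta$ into \eqref{exp decay: TR} yields
\[
\min_{i}|u(z)-a_i| \leq K e^{-k(|y|-Cx^\beta)}, \qquad |y| \geq Cx^\beta,
\]
which is the core of \eqref{est: close to a1}--\eqref{est: close to a3} except for the identification of \emph{which} well $u(z)$ is near.

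The second step is that identification. Once $\min_i|u(z)-a_i|$ is small (say $<\delta_W/2$) on the connected region $\{x \geq R_0,\ y \geq Cx^\beta\}$, continuity of $u$ forces $u$ to stay near a single well throughout this connected set; comparing with the $L^1_{loc}$ convergence $u_R \to u_{\mathcal{P}}$ (Theorem \ref{thm: uniqueness}, together with the local uniform convergence Proposition \ref{prop: local uniform convergence} away from $\supp \na u_{\mathcal{P}}$) identifies that well as $a_1$ for the upper region and $a_3$ for the lower region. This upgrades the displayed bound to \eqref{est: close to a1} and \eqref{est: close to a3}. For the gradient estimate \eqref{est: grad}, I would combine the pointwise smallness of $|u-a_i|$ just obtained with interior Schauder/elliptic estimates for \eqref{eq: AC system}: on a unit ball around $z$ with $|y| \geq 2Cx^\beta$, say, one has $|u-a_i| \leq K e^{-k(|y|/2 - \tilde C x^\beta)}$ throughout the ball (the exponent changes only by a bounded factor since $x$ varies by $O(1)$ and $|y|$ by $O(1)$ across the ball), and then $W_u(u) = W_{uu}(a_i)(u-a_i) + O(|u-a_i|^2)$ is exponentially small, so $\|u-a_i\|_{C^{1}(B_{1/2}(z))} \leq C\|u-a_i\|_{L^\infty(B_1(z))} + C\|W_u(u)\|_{L^\infty(B_1(z))}$ is exponentially small as well; adjusting constants gives \eqref{est: grad}.

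The main obstacle is the first step, specifically the bookkeeping that turns the pair of scale-$R$ estimates \eqref{diff interface loc} and \eqref{angle diff} into a uniform-in-$x$ statement: one must check that summing the angle errors $CR^{(\al-1)/2}$ over dyadic scales converges (it does, since $\al<1$) and that the resulting error $Cx^\beta$ genuinely dominates the geometric mismatch between $\dist(z,T_R)$ and $|y|$ for a point $z$ at distance $\sim x$ from the origin — i.e. a chord of angular error $\phi$ at radius $x$ is displaced by $\sim x\phi \sim x \cdot x^{(\al-1)/2} = x^{(\al+1)/2} = x^\beta$, which is exactly the budget available. One also has to be slightly careful that the constants $C,K,k$ produced this way are genuinely independent of the scale, which follows because Proposition \ref{prop: appr uR} already provides scale-independent constants. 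The well-identification step and the gradient step are routine given the machinery quoted in Section \ref{preliminary}.
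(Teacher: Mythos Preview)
Your plan is essentially the paper's own argument: set $\beta=\frac{\alpha+1}{2}$, sum the dyadic increments \eqref{angle diff} to get $|\theta_R|\le C R^{\beta-1}$, use this together with the $L^1$ convergence \eqref{L1 conv} to pin the approximate triod $T_R$ within $O(R^\beta)$ of $\partial\mathcal P$, feed $\dist(z,T_R)\ge |y|-Cx^\beta$ into \eqref{exp decay: TR}, identify the well by connectedness, and finish with interior Schauder for \eqref{est: grad}.

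Two small points where the paper's execution differs from your sketch. First, your choice ``$R\sim 2x$ so that $z\in B_{3R/4}$'' breaks down when $|y|>x$: then $|z|>x$ and $z$ need not lie in $B_{3R/4}$. The paper handles this by an explicit case split, taking $R=4x$ when $Cx^\beta\le |y|\le x$ and $R=4|y|$ when $|y|>x$, and in the latter case checking that $\dist((x,y),T_{4|y|})-C|y|^\beta \ge \tfrac13(|y|-Cx^\beta)$ so that after shrinking $k$ the estimate still matches \eqref{est: close to a1}. Second, rather than assuming a direct bound $|O_R|\lesssim R^\beta$ sits in \cite{geng2024uniqueness}, the paper runs a short contradiction argument: if $D_R^1$ had $|y|$-coordinate larger than $C_0 R^\beta$, then (since $|\theta_R|\le C_1R^{\beta-1}$) the vertex $O_R$ would satisfy $r:=|O_R|\ge 50C_2 R^\beta$, forcing $\Gamma_\delta\cap B_{r}$ into an $\frac{r}{50}$-neighborhood of a triod centered on $\partial B_{r}$, which violates \eqref{L1 conv} as $r\to\infty$.
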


\begin{proof}
    Let $\beta=\f{1+\al}{2}$, where $\al$ is the parameter in Proposition \ref{prop: appr uR}. We first show that for sufficiently large $R$,
    \begin{equation}\label{est: DR}
    D_R^1\in \{(x,y): |y|\leq C_0R^\beta\}.
    \end{equation}
    for $C_0=C_0(W,u)$ independent of $R$. 

From \eqref{angle diff}, we have 
\begin{equation}\label{est: thR}
\begin{split}
    |\theta_R|&=|\theta_R-\theta_\infty|\\
    &\leq \sum\limits_{n=0}^\infty |\theta_{2^{n}R}-\theta_{2^{n+1}R}|\\
    &\leq C(\sum\limits_{n=0}^\infty 2^{n(\beta-1)})R^{\beta-1}=C_1R^{\beta-1}.
\end{split}
\end{equation}
Furthermore, let $C_2$ denote the constant $C$ in \eqref{diff interface loc}. Set
\begin{equation*}
    C_0:=100\max\{C_1,C_2\}.
\end{equation*}
We show that this choice of $C_0$ is sufficient to validate \eqref{est: DR}. Assume by contradiction there exists a sequence $R_i\ri\infty$, such that $D_{R_i}^1=(x_i,y_i)$ satisfies $|y_i| >C_0R_i^\beta$. Since $|\theta_{R_i}|\leq C_1{R_i}^{\beta-1}$, by an elementary geometry argument we obtain
\begin{equation*}
    \dist((0,0), l_i)> \f{C_0}{2}R_i^\beta,
\end{equation*}
where $l_i$ represents the straight line passing by $D_{R_i}^1$ and $O_{R_i}$. Therefore,
\begin{equation*}
    r_i:=\dist(O_{R_i}, (0,0))>\f{C_0}{2}R_i^{\beta}\geq 50C_2R_i^{\beta}.
\end{equation*}
We focus on the ball $B_{r_i}$. From \eqref{diff interface loc} we have 
\begin{equation}
\Gamma_\delta\cap B_{r_i}\subset \Gamma_\delta\cap B_{R_i} \subset \{z: \dist(z,T_{R_i})\leq C_2R_i^{\beta}\},
\end{equation}
which implies the $\Gamma_\delta\cap B_{r_i}$ is contained within an $\f{r_i}{50}$-neighborhood of a triod (i.e. $T_{R_i}$) centered on $\pa B_{r_i}$. Outside this neighborhood, $u(z)$ remains close to one of the energy wells $a_i$. It is clear that there exists a positive constant $c>0$ such that 
\begin{equation*}
    \|u_{r_i}-u_{\mathcal{P}}\|_{L^1(B_1)}>c,\quad \forall i.
\end{equation*}
As $r_i\ri\infty$, this yields a contradiction with \eqref{L1 conv}, thereby proving \eqref{est: DR}. 

Define 
\begin{equation*}
    C:=100(C_0+C_2).
\end{equation*}
We fix a sufficiently large $x$ such that $Cx^\beta<\f12 x$. If $Cx^\beta <y <x$, then $z=(x,y)\in B_{2x}$. By \eqref{est: DR} and \eqref{est: thR} we have
\begin{equation*}
    \dist(z, T_{4x})\geq C_2(4x)^\beta,
\end{equation*}
which with \eqref{exp decay: TR} shows that \eqref{est: close to a1} holds for $Cx^\beta <y <x$.

For the case $y>x$, we proceed similarly, only replacing $B_{4x}$ with $B_{4y}$, to obtain
\begin{equation*}
    |u(x,y)-a_1|\leq Ke^{-k(\dist((x,y),T_{4y})-Cy^\beta)}.
\end{equation*}
When $x$, and thus $y$, is chosen sufficiently large, we have $\dist((x,y),T_{4y})-Cy^\beta>\f13(y-Cx^\beta)$. Therefore, by updating $k$ to $\frac{k}3$, we can verify \eqref{est: close to a1} for the case $y>x$. 

\eqref{est: close to a3} can be proven in exactly the same manner as \eqref{est: close to a1}, while \eqref{est: grad} follows directly from the standard elliptic regularity theory. This completes the proof.
\end{proof}

For $r_1<r_2$, $\theta_1<\theta$, we define the angular section
\begin{equation*}
    \mathcal{A}(r_1,r_2;\theta_1,\theta_2):= \{z=(r\cos{\theta},r\sin{\theta}): r\in(r_1,r_2), \theta\in(\theta_1,\theta_2)\}.
\end{equation*}

Next we prove the following sharp energy lower and upper bounds on $B_R$.

\begin{lemma}\label{lemma: sharp bdd on BR}
    There are constants $R_1$, $C$, depending on $W,\,u$ only, such that for $R>R_1$,
    \begin{equation}\label{est: sharp ene bdd on BR}
        3\sigma R-C\leq E(u,B_R)\leq 3\sigma R+C.
    \end{equation}
\end{lemma}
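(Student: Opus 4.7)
The plan is to prove the two bounds separately by slicing $B_R$ into concentric circles and integrating the slicewise tangential energy over the radial variable. The lower bound is the cleaner part and follows from the unit cost of each of the three phase transitions on every $\partial B_r$; the upper bound requires a competitor construction using translated 1D profiles and is the main technical point.

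For the lower bound, I would use Lemma \ref{lemma:unif exp decay} (applied along each of the three interfaces of $\mathcal{P}$ by an identical argument) to conclude that, for every sufficiently large $r$, the circle $\partial B_r$ decomposes into three ``bulk arcs'' on which $u$ lies exponentially close to a single well $a_i$, separated by three ``transition arcs'' of arclength at most $O(r^\beta)$. Choose a slowly decaying threshold, say $\delta(r)=r^{-1}$, so that $|u-a_i|=\delta(r)$ at the endpoints of each transition arc and $\int_{R_1}^\infty \delta(r)^2\,dr<\infty$. Parametrizing each transition arc by arclength turns $\partial_T u$ into the standard derivative, so Lemma \ref{lemma: 1D energy estimate} yields
\[
\int_{\mathrm{arc}_{ij}}\Bigl(\tfrac{1}{2}|\partial_T u|^2+W(u)\Bigr)\,ds \,\geq\, \sigma_{ij}-C\delta(r)^2 \,=\, \sigma-C\delta(r)^2.
\]
Summing over the three transitions, using $|\nabla u|^2\geq|\partial_T u|^2$, and integrating in $r\in(R_1,R)$ in polar coordinates gives
\[
E(u,B_R) \,\geq\, E(u,B_{R_1})+\int_{R_1}^R\bigl(3\sigma-C\delta(r)^2\bigr)\,dr \,\geq\, 3\sigma R-C.
\]

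For the upper bound, the strategy is to construct a competitor $v$ on $B_R$ with $v=u$ on $\partial B_R$ and $E(v,B_R)\leq 3\sigma R+C$, then invoke the local minimality of $u$. I would take $v=a_i$ in the three sectors of $\mathcal{P}$ away from the interfaces, set $v(x,y)=U_{ij}(y-h(x))$ in strips of width $O(1)$ around each interface half-line (with $h(\cdot)$ a slowly varying translation chosen to match the boundary data of $u$), smoothly adjust in an $O(1)$ neighborhood of the triple point, and interpolate between this ansatz and $u$ in a thin annulus near $\partial B_R$. Each strip contributes $\sigma R+O(1)$ from $J(U_{ij})=\sigma$ integrated over a length-$R$ interval, while the remaining pieces contribute $O(1)$. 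The main obstacle is the annular boundary matching: a priori $u|_{\partial B_R}$ in a transition region need not be close to any single translated profile $U_{ij}(\cdot-h)$, and this closeness is precisely the content of Theorem \ref{thm: rigidity triple junction}, not yet available. To sidestep the circularity I would allow $h=h(x)$ to vary slowly along the strip, chosen so that $v$ matches $u$ on $\partial B_R$ up to an $L^2$ error of size $\delta(R)$, and control the interpolation energy via the uniform $C^{2,\alpha}$ estimate \eqref{reg of u} together with the exponentially small values of $|u-a_i|$ on the bulk arcs from Lemma \ref{lemma:unif exp decay}.
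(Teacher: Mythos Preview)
Your lower bound is correct and is essentially the paper's argument: slice in polar coordinates, use the exponential decay from Lemma~\ref{lemma:unif exp decay} to find, on each circle $\partial B_r$, three points at which $u$ is close to the three wells, and apply Lemma~\ref{lemma: 1D energy estimate} on each of the three tangential arcs. The paper simply takes $\delta(r)=Ke^{-k(\sqrt{3}r-Cr^\beta)}$ directly rather than your $r^{-1}$, but either choice integrates.

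The upper bound, however, has a real gap. Your competitor matches $u$ on $\partial B_R$ for an \emph{arbitrary} $R$, and the boundary-matching cost in the thin annulus is what fails. From Lemma~\ref{lemma:unif exp decay} you only know that the three transition arcs on $\partial B_R$ have arclength $O(R^\beta)$; on those arcs you have no information about $u$ beyond the uniform bound $|\nabla u|\leq M$. A linear interpolation between $u$ and any profile $U_{ij}(\cdot-h)$ therefore costs, on each transition arc, at least $\int_{\mathrm{arc}}|\partial_T u|^2\,d\mathcal{H}^1$, which a priori is only bounded by $M^2\cdot O(R^\beta)$. Allowing $h$ to vary along the strip does not help: the issue is not that $u$ is a profile with the wrong translation, but that on a generic circle $u|_{\partial B_R}$ need not be close to \emph{any} profile and its tangential energy need not be $O(1)$. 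Your construction thus yields $E(u,B_R)\leq 3\sigma R+CR^\beta$, which does not improve on Proposition~\ref{prop: appr uR}.

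The missing idea is a good-radius selection. From \eqref{eq: energy limit} and Fubini one finds $\bar R\in(R,2R)$ with
\[
\int_{\partial B_{\bar R}}\Bigl(\tfrac12|\partial_T u|^2+W(u)\Bigr)\,d\mathcal{H}^1\leq 3\sigma+\varepsilon.
\]
This a priori bound on the \emph{slicewise} energy forces the three transition arcs on $\partial B_{\bar R}$ to have length $O(1)$ (not $O(\bar R^\beta)$), so the annular interpolation now costs $O(1)$ and the competitor gives $E(u,B_{\bar R})\leq 3\sigma\bar R+C$. One then transfers this to $B_R$ by subtracting the already-proved lower bound on the annulus $B_{\bar R}\setminus B_R$:
\[
E(u,B_R)=E(u,B_{\bar R})-E(u,B_{\bar R}\setminus B_R)\leq (3\sigma\bar R+C)-(3\sigma(\bar R-R)-C)=3\sigma R+C.
\]
Without this selection step the $O(1)$ upper bound cannot be reached.
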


\begin{proof}
    Let $R_1:=10R_0$, where $R_0$ is the constant in Lemma \ref{lemma:unif exp decay}. For any $R>R_1$, by the exponential decay results stated in Lemma \eqref{lemma:unif exp decay}, which also hold for the interfaces $\pa \mathcal{D}_1\cap \pa\mathcal{D}_2$ and $\pa \mathcal{D}_2\cap\pa\mathcal{D}_3$, we have
\begin{equation}\label{est: close to ai on 3 interface}
   \max\left\{|u(r,\!\sqrt{3}r)-a_1|, |u(r,\!-\sqrt{3}r)-a_3|, |u(-2r,\!0)-a_2|\right\} \leq Ke^{-k(\sqrt{3}r-Cr^\beta)},\  r\in [R_0,R].
\end{equation}
This, together with Lemma \ref{lemma: 1D energy estimate}, implies
\begin{equation}\label{est: sharp lower bdd on BR}
\begin{split}
    &E(u,{B_R\setminus B_{R_0}})\\
    \geq & \int_{R_0}^{R} \int_{\pa B_r} \left(\f12|\pa_T u|^2+W(u)\right)\,d\ch\,dr\\
    \geq & 3\sigma(R-R_0)-\int_{R_0}^{R}3Ce^{-2k(\sqrt{3}r-Cr^\beta)}\,dr\\
    \geq &3\sigma R-C(W,u),
\end{split}    
\end{equation}
which proves the lower bound in \eqref{est: sharp ene bdd on BR}.

Now we fix two small constants $\delta$ and $\e$ depending on $W$ and $u$. The energy limit \eqref{eq: energy limit} and Fubini's Theorem imply that for suitably large $R$, one can always find a $\bar{R}\in (R,2R)$ such that 
\begin{equation*}
    3\sigma-\e\leq \int_{\pa B_{\bar{R}}} \left(\f12|\pa_{T} u|^2+W(u)\right)\,d\ch \leq 3\sigma+\e. 
\end{equation*}
Then we invoke the arguments from \cite[Section 4]{geng2024uniqueness} to obtain the following nice behavior of $u$ on $\pa B_{\bar{R}}$, when $\delta$ and $\e$ are chosen suitably small. 
\begin{enumerate}
    \item There are three arcs $I_1, I_2, I_3\subset \pa B_{\bar{R}}$ such that
    \begin{equation*}
        z\in I_i \Rightarrow |u(z)-a_i|\leq \delta,\ \ i=1,2,3. 
    \end{equation*}
    \item There are three arcs $I_{12}, I_{23}, I_{13}$ denoting transitional arcs between $I_i's$,  such that  
    \begin{equation*}
    \begin{split}
        &C_1\leq \ch(I_{ij})\leq \f{C_2}{\delta^2}, \ \forall i\neq j.\\
        & \bigcup\limits_{i\neq j} I_{ij} =\pa B_{\bar{R}}\setminus (\bigcup\limits_{i} I_i).
        \end{split}
    \end{equation*}
    Here $C_1,C_2$ are constants that only depends on $W$.
\end{enumerate}

Now we construct an energy competitor $V(z)$ on $\pa B_{\bar{R}}$. For $z\in \pa B_{\bar{R}-1}$, we define 
\begin{equation*}
    v_1(z)=\begin{cases} 
         a_i, & z\in \f{\bar{R}-1}{\bar{R}}I_i,\ i=1,2,3\\
         \text{smooth connection of $a_i$ to $a_j$}, & z\in \f{\bar{R}-1}{\bar{R}} I_{ij}.
    \end{cases}
\end{equation*}

On the closed annulus $\{\bar{R}-1\leq |z|\leq \bar{R}\}$, we set
\begin{equation*}
    v(z)=\begin{cases}
        u(z), & |z|=\bar{R},\\
        v_1(z), & |z|=\bar{R}-1,\\
        \text{linear interpolation}, & |z|\in(\bar{R}-1,\bar{R}).
    \end{cases}
\end{equation*}
We split the energy of $v$ on the annulus into four parts:
\begin{align*}
    &E(v, B_{\bar{R}}\setminus B_{\bar{R}-1})\\
    = & \sum\limits_{j=1}^3\int_{\{\theta: \bar{R}e^{i\theta}\in I_j\}} \int_{\bar{R}-1}^{\bar{R}} \f12|\pa_r v|^2\,rdrd\theta\\
    &\quad + \sum\limits_{j=1}^3\int_{\{\theta: \bar{R}e^{i\theta}\in I_j\}} \int_{\bar{R}-1}^{\bar{R}}\f12|\pa_T v|^2\,rdrd\theta\\
    &\quad + \sum\limits_{j=1}^3\int_{\{\theta: \bar{R}e^{i\theta}\in I_j\}} \int_{\bar{R}-1}^{\bar{R}}W(v)\,rdrd\theta\\
    &\quad +\int_{\{\theta: \bar{R}e^{i\theta}\in I_{12}\cup I_{23}\cup I_{13}\}} \int_{\bar{R}-1}^{\bar{R}}\left(\f12|\na v|^2+W(v)\right)\,rdrd\theta\\
    =: & \mathcal{E}_1+\mathcal{E}_2+\mathcal{E}_3+\mathcal{E}_4.
\end{align*}
For $\mathcal{E}_4$, since the length of $I_{ij}$ is $O(1)$ and $|\na u|$ and $W(u)$ are uniformly bounded, immediately one gets
\begin{equation*}
    \mathcal{E}_4\leq C(W,\delta).
\end{equation*}
For quantities $\mathcal{E}_1, \mathcal{E}_2, \mathcal{E}_3$, we estimate using Lemma \ref{lemma: potential energy estimate}:
\begin{align*}
    \mathcal{E}_1&\leq \sum\limits_{j} \int_{\bar{R}-1}^{\bar{R}} \int_{\{\theta: \bar{R}e^{i\theta}\in I_j\}} \f12|u(\bar{R},\theta)-a_j|^2r\,drd\theta\\
    &\leq \sum\limits_{j}  \int_{I_j} |u-a_j|^2\,d\ch\\
    &\leq \sum\limits_{j}  \int_{I_j} CW(u)\,d\ch\leq C.
\end{align*}
\begin{equation*}
    \mathcal{E}_2\leq \int_{\pa B_{\bar{R}}} \f12|\pa_T u|^2\,d\ch\leq 4\sigma.
\end{equation*}
\begin{equation*}
    \mathcal{E}_3\leq \sum\limits_{j=1}^3\int_{\{\theta: \bar{R}e^{i\theta}\in I_j\}} \int_{\bar{R}-1}^{\bar{R}} C|u-a_j|^2\,drd\theta\leq C.
\end{equation*}

Combining all the estimates above implies 
\begin{equation*}
    \int_{B_{\bar{R}}\setminus B_{\bar{R}-1}} \left( \f12|\na v|^2+ W(v) \right)\leq C, \quad \text{ for some }C=C(W).
\end{equation*}
Here we can choose $\delta$ such that Lemma \ref{lemma: potential energy estimate} is applicable. 

Then we follow the same construction of the energy competitor in \cite[Appendix A]{alikakos2024triple} to complete the construction of $v(z)$ inside $B_{\bar{R}-1}$, which satisfies
\begin{equation*}
    \int_{B_{\bar{R}-1}}\left(\f12|\na v|^2+W(v)\right)\,dz\leq 3\sigma(\bar{R}-1)+C(W).
\end{equation*}

We explain the rough idea: First we pick three midpoints of three transition layers, $I_{12}, I_{23}, I_{13}$, denoted by $A_{12}, A_{23}, A_{13}$ respectively. For the approximated interface $OA_{ij}$, we define the function $v(z)=U_{ij}(\dist(z, OA_{ij}))$ within the annulus sector $\mathcal{A}(R_0,\bar{R}-2;\theta(A_{ij})-\f{\pi}{6}, \theta_{ij}+\f{\pi}{6})$, where $
\dist(\cdot,\cdot)$ represents the signed distance. Most of the energy will be concentrated in these three annulus sectors, which adds up to $3\sigma(\bar{R}-R_0)-C$. For the remaining regions of $B_{\bar{R}-1}$, we interpolate linearly, and the residual energy can be shown to be bound by a constant $C$. For the detailed proof, we refer interested readers to \cite[Appendix A]{alikakos2024triple}. 

Therefore, we combine the estimates and the minimality of $u$ to get 
\begin{equation*}
    E(u,B_{\bar{R}}) \leq E(v,B_{\bar{R}})\leq 3\sigma \bar{R}+C.
\end{equation*}
Recall that $\bar{R}\in (R,2R)$. We utilize \eqref{est: close to ai on 3 interface} again to bound the energy from below on the annulus $B_{\bar{R}}\setminus B_R$:
\begin{equation*}
    \int_{B_{\bar{R}}\setminus B_R} \left(\f12|\na u|^2+W(u)\right)\,dz\geq 3(\bar{R}-R)-C.
\end{equation*}
Combining the two inequalities above we obtain the upper bound in \eqref{est: sharp ene bdd on BR}, which completes the proof.

\end{proof}

\begin{rmk}
    A direct consequence of Lemma \ref{lemma: sharp bdd on BR} is
    \begin{equation}
        \label{est: radial ene}
        \int_{\BR^2} |\pa_r u|^2\,dz\leq C.
    \end{equation}
    This follows from the fact that only tangential deformation is considered when estimating the lower bound in \eqref{est: sharp lower bdd on BR}.  
\end{rmk}

\begin{lemma}\label{lemma: diffuse interface R1/2} 
For suitably small $\delta$, there exist $R_0$, $C_1$ and $C_2$, depending on $W$, $u$ and $\delta$, such that for $R>R_0$, there exists an angle $\theta^R\in (-C_1R^{\beta-1},C_1R^{\beta-1})$ that satisfies
\begin{equation}\label{est: diffuse interface R1/2 close}
    \left(\G_\delta \cap \mathcal{A}\left(R,9R; -\f{\pi}{3},\f{\pi}{3}\right)\right)\subset \mathcal{A}\left(R,9R; \theta^R-C_2R^{-\f12}, \theta^R+C_2R^{-\f12}\right)
\end{equation}

Moreover, there exist $K,k$ depending on $W$ and $u$, such that 
\begin{align}
   \label{R1/2 close, exp decay1} &|u(z)-a_1|\leq Ke^{-kr(\theta-\theta^R-C_2R^{-\f12})},\ z=(r\cos\theta,r\sin\theta)\in \mathcal{A}\left(2R,8R; \theta^R+C_2R^{-\f12},\f{\pi}{2}\right),\\
   \label{R1/2 close, exp decay2} &|u(z)-a_3|\leq Ke^{-kr(\theta^R-C_2R^{-\f12}-\theta)},\ z=(r\cos\theta,r\sin\theta)\in \mathcal{A}\left(2R,8R; -\f{\pi}{2}, \theta^R-C_2R^{-\f12}\right).
\end{align}
\end{lemma}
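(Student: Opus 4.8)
The plan is to upgrade the $O(R^{(1+\al)/2})$ localization of Proposition \ref{prop: appr uR}(2) to an $O(R^{1/2})$ one by feeding the interface‑tracking machinery the \emph{scale‑independent} energy excess now available from Lemma \ref{lemma: sharp bdd on BR} in place of the $O(R^{\al})$ excess used in \cite{geng2024uniqueness}, and then to telescope over dyadic annuli to convert the resulting thin strip into the angular statement \eqref{est: diffuse interface R1/2 close}. First I would localize the energy to each angular sector of an annulus: fix $\rho\ge R_0$ and partition each circle $\pa B_r$, $r\in(\rho,9\rho)$, into the three arcs $(-\f\pi3,\f\pi3)$, $(\f\pi3,\pi)$, $(\pi,\f{5\pi}3)$, each of which carries exactly one phase transition with $u$ exponentially close to a single well at its endpoints (Lemma \ref{lemma:unif exp decay}, applied after rotation to all three sharp interfaces); parametrizing each arc by arclength and invoking Lemma \ref{lemma: 1D energy estimate}, the tangential‑plus‑potential energy over the corresponding annular sector is $\ge 8\sigma\rho-C$, and summing the three and comparing with $E(u,B_{9\rho})-E(u,B_\rho)\le 24\sigma\rho+C$ (Lemma \ref{lemma: sharp bdd on BR}) shows each sector carries energy $8\sigma\rho+O(1)$. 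Subtracting the tangential lower bound then yields, with $C$ independent of $\rho$, both $\int_{\mathcal A(\rho,9\rho;-\f\pi3,\f\pi3)}|\pa_r u|^2\,dz\le C$ and $\int_\rho^{9\rho}\xi(r)\,dr\le C$, where $\xi(r):=\int_{\pa B_r\cap(-\f\pi3,\f\pi3)}\big(\f12|\pa_T u|^2+W(u)\big)\,d\ch-\sigma$; since $\xi(r)\ge -Ce^{-cr}$, running the same computation on the full circle shows the total tangential excess over $(R_0,\infty)$ is finite, so the set of ``bad'' radii on which $\xi>\e_0$ has finite measure and hence vanishing relative measure in $(\rho,9\rho)$.

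Next I would track the $a_1$–$a_3$ interface along arcs. Writing $v_r(\theta):=u(r\cos\theta,r\sin\theta)$ for $\theta\in(-\f\pi3,\f\pi3)$, on a good radius $r$ the bound $\xi(r)\le\e_0$ confines $v_r$ to a small $H^1$‑tube around the orbit of the heteroclinic $U$ (quantitative rigidity, using the non‑degeneracy in (H2), as in \cite{schatzman2002asymmetric}), so there is a unique $\theta^*(r)$ with $v_r-U(r(\cdot-\theta^*(r)))$ orthogonal in $L^2(d\theta)$ to $U'(r(\cdot-\theta^*(r)))$, depending $C^1$ on $r$. Differentiating this orthogonality relation in $r$ — in this parametrization $\pa_r v_r(\theta)=\pa_r u(r\cos\theta,r\sin\theta)$, so no reparametrization term arises — and arguing as for $h'(x)$ in the introduction, one obtains
\[
|(\theta^*)'(r)|\le\frac{C}{r^2}+\frac{C}{r}\Big(\int_{\pa B_r\cap(-\f\pi3,\f\pi3)}|\pa_r u|^2\,d\ch\Big)^{1/2}+\frac{C\sqrt{\xi(r)}}{r},
\]
the perturbative term $C\sqrt{\xi(r)}\,|(\theta^*)'(r)|$ being absorbed on the left once $\e_0$ is small. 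Integrating over $(\rho,9\rho)$, applying Cauchy–Schwarz, and using the two bounds of the previous step gives $|\theta^*(r_2)-\theta^*(r_1)|\le C\rho^{-1/2}$ for all good $r_1,r_2\in(\rho,9\rho)$; on bad radii the interface position is controlled by comparison with a nearby good radius, the intervening $L^2$‑oscillation of $u$ being $o(\rho^{-1/2})$ because the bad set has vanishing relative measure. The transition region on $\pa B_r$ itself spans angular width $O(1/r)$.

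Finally I would telescope: applying the previous step with $\rho=9^nR_0$ and summing the geometric series $\sum_n C(9^nR_0)^{-1/2}$, the limit $\lim_{r\to\infty}\theta^*(r)$ exists and equals $0$ — the limiting interface being $\{(x,0):x>0\}$, cf. Proposition \ref{prop: appr uR} and Theorem \ref{thm: uniqueness} — and, because the series is dominated by its first term, $|\theta^*(\rho)|\le C\rho^{-1/2}$ for every $\rho\ge R_0$. Thus any $z=(r\cos\theta,r\sin\theta)\in\G_\delta\cap\mathcal A(R,9R;-\f\pi3,\f\pi3)$ has $|\theta|\le|\theta^*(r)|+O(1/r)\le C_2R^{-1/2}$, which is \eqref{est: diffuse interface R1/2 close} with $\theta^R=0$ (and $0\in(-C_1R^{\beta-1},C_1R^{\beta-1})$ since $\beta>\f12$). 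The estimates \eqref{R1/2 close, exp decay1}–\eqref{R1/2 close, exp decay2} then follow exactly as in the proof of Lemma \ref{lemma:unif exp decay}: with $\G_\delta$ trapped in the thin sector, $u$ stays within $\delta$ of $a_1$ (resp.\ $a_3$) on the part of the annulus $\{2R<|z|<8R\}$ lying above (resp.\ below) it and away from the neighboring $\mathcal D_1$–$\mathcal D_2$ and $\mathcal D_2$–$\mathcal D_3$ interfaces, and the variational maximum principle (Lemma \ref{lemma: maximum principle}) together with Lemma \ref{lemma: potential energy estimate} and the coercivity $\xi^TW_{uu}(a_i)\xi\ge c_1|\xi|^2$ promotes this to exponential decay at a rate proportional to $\dist(z,\text{the thin sector})$, i.e.\ to $r(\theta-\theta^R-C_2R^{-1/2})$.

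The step I expect to be the main obstacle is the arc‑wise interface tracking with only an $O(1)$ energy excess — in particular controlling the interface on radii where $u|_{\pa B_r}$ is not yet $H^1$‑close to a heteroclinic; this is essentially the mechanism of Proposition 6.1 of \cite{geng2024uniqueness}, the only new ingredient being Lemma \ref{lemma: sharp bdd on BR}, which replaces the $R^{\al}$ excess by a constant and so shrinks the neighborhood from $R^{(1+\al)/2}$ to $R^{1/2}$. A secondary point is that the localization a priori yields a strip that could be centered $O(R^{\beta})$ off the $x$‑axis; it is the geometric decay of the per‑scale angular fluctuations in the telescoping step that pins the center to within $O(R^{1/2})$, equivalently $O(R^{-1/2})$ in angle.
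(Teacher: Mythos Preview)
Your strategy is genuinely different from the paper's, and the comparison is worth spelling out.

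\textbf{What the paper actually does.} The paper does \emph{not} track the interface continuously in $r$. Instead it selects just two good circles $R_1\in(\tfrac12R,R)$ and $R_2\in(9R,10R)$ on which the tangential energy is within $\e$ of $3\sigma$, reads off the transition angles $\theta_1,\theta_2$ there, and then proves $|\theta_1-\theta_2|\le C_3R^{-1/2}$ by a direct energy contradiction: if the gap were larger, then on every radial segment $\{re^{i\theta}:r\in[R_1,R_2]\}$ with $\theta$ between $\theta_1$ and $\theta_2$ one has a full $a_3$--to--$a_1$ transition in the radial variable, and a Pythagorean splitting
\[
\tfrac12|\na u|^2+W(u)\ \ge\ \big(\tfrac12|\pa_T u|^2+\sin^2\!\varphi\,W(u)\big)+\big(\tfrac12|\pa_r u|^2+\cos^2\!\varphi\,W(u)\big)
\]
on the annulus, optimized in $\varphi$, forces $E(u,\mathcal A(R_1,R_2;0,2\pi))\ge 3\sigma(R_2-R_1)+cC_3^2-C$, contradicting the sharp upper bound from Lemma~\ref{lemma: sharp bdd on BR}. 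A second, similar contradiction produces radial rays $\tilde\theta_1,\tilde\theta_3$ near $\theta^R:=\tfrac12(\theta_1+\theta_2)$ along which $u$ stays $\delta$--close to $a_1$, $a_3$ respectively, and then the variational maximum principle fills in the two angular sectors. No ODE, no Schatzman--type rigidity, no handling of bad radii is needed at this stage; the $\theta^*$/$h(x)$ machinery is reserved for Section~3.3.

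\textbf{What your route buys and costs.} Your approach front--loads the interface--tracking argument that the paper only deploys later for vertical slices, and by telescoping you obtain directly $|\theta^*(r)|\le Cr^{-1/2}$, i.e.\ $\theta^R=0$, which the paper only derives \emph{after} the lemma by comparing scales $R$ and $2R$. The price is that the arc profile $v_r(\theta)=u(re^{i\theta})$ lives on a bounded interval and the approximant $U(r(\theta-\theta^*))$ depends on $r$ through the scaling as well as through $\theta^*$ (your ``no reparametrization'' remark applies to $v_r$ but not to the approximant), so the ODE for $\theta^*$ carries extra $O(r^{-2})$ terms you should account for; and more seriously, your control of bad radii via $L^2$--oscillation bounds only the total bad \emph{measure}, not the number of bad intervals, so the jump in $\theta^*$ across each bad interval (which picks up a fixed $O(\sqrt{\e_0}/r)$ from the endpoint error) is not obviously summable. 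These are fixable, but the paper's two--circle contradiction bypasses them entirely, and the variational maximum principle (which you only invoke for the exponential decay) is in fact what carries the localization \eqref{est: diffuse interface R1/2 close} across the interior of the annulus.
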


\begin{proof}

    For sufficiently large $R$, using Fubini Theorem, \eqref{eq: energy limit} and Proposition \ref{prop: appr uR}, we follow the same argument as in the proof of Lemma \ref{lemma: sharp bdd on BR} to derive the existence of $R_1\in(\f12 R,R)$, $R_2\in(9R,10R)$ that satisfies the following properties:
    \begin{enumerate}
        \item $3\sigma-\e<\f{1}{R_i}\int_{\pa B_{R_i}} \left(\f12|\pa_T u|^2+W(u)\right)<3\sigma+\e$, for $i=1,2$, $\e\ll 1$. 
        \item There are $\theta_1, \theta_2 \in (-CR^{\beta-1}, CR^{\beta-1})$ such that  
        \begin{align}
 \label{close to a1}           &|u(z)-a_1|\leq \f12\delta, \text{ for }z=(R_i\cos\theta, R_i\sin\theta),\ \theta\in (\theta_i+\f{C}{R_i},\f{\pi}{3}), \ i=1,2,\\
 \label{close to a3}           &|u(z)-a_3|\leq \f12\delta, \text{ for }z=(R_i\cos\theta, R_i\sin\theta),\ \theta\in (-\f{\pi}{3}, \theta_i-\f{C}{R_i}), \ i=1,2.
        \end{align}
    \end{enumerate}
    The inequalities above basically indicate that on $\pa B_{R_i}\cap \{\theta\in(-\f{\pi}{3},\f{\pi}{3})\}$, $u(z)$ will be uniformly close to $a_1$ or $a_3$ outside a transition layer of size $O(1)$. 

    Next we claim that there exists a constant $C_3=C_3(W,u)$ such that
    \begin{equation*}
        |\theta_1-\theta_2|\leq C_3R^{-\frac12}.
    \end{equation*}

    \textbf{Proof of the Claim: } We argue by contradiction. Assume that $\theta_1-\theta_2>C_3R^{-\f12}$ for some $C_3=C_3(W,u)$ to be determined. We can take $R$ large enough to guarantee that 
\begin{equation*}
(\theta_1-\frac{C}{R_1})-(\theta_2+\frac{C}{R_2})>\frac{C_3}{2}R^{-\f12},
\end{equation*}
where the constant $C$ appears in \eqref{close to a1} and \eqref{close to a3}. Then for $\theta\in(\theta_2+\frac{C}{R_2}, \theta_1-\frac{C}{R_1})$, we have 
\begin{equation}\label{close to a1,a3}
|u(R_1,\theta)-a_3|\leq \delta,\quad |u(R_2,\theta)-a_1|\leq \delta.
\end{equation}
Now we calculate the energy on the annulus $\mathcal{A}(R_1,R_2;0,2\pi)$. Let $\varphi\in (0,\frac{\pi}{2})$ be an angle whose value will be specified later, we compute
\begin{equation}\label{ene est: rad and tang}
\begin{split}
&E(u,\mathcal{A}(R_1,R_2;0,2\pi)) \\
\geq & \int_{R_1}^{R_2} \int_{\pa B_{r}}\left(\f12|\pa_T u|^2+\sin^2{\varphi}W(u)\right)\,d\mathcal{H}^1\,dr + \int_{\theta_2+\frac{C}{R_2}}^{\theta_1-\frac{C}{R_1}} \int_{R_1}^{R_2} \left(\f12|\pa_r u|^2+\cos^2{\varphi}W(u)\right)r\,dr\,d\theta\\
\geq & \int_{R_1}^{R_2}  \sin\varphi(3\sigma-Ce^{-kr})\,dr + \f{C_3}{2}R^{-\f12}\cdot(\cos\varphi(\sigma-C\delta^2))\cdot(\f12R)\\
\geq & \left[3\sin\varphi(R_2-R_1)+\cos\varphi\f{C_3R^{\f12}}{8}\right]\sigma-C_4(W).
\end{split}
\end{equation}
Since the estimate above applies to any $\varphi$, we take $\varphi=\arctan{\f{3(R_2-R_1)}{\f{C_3}{8}R^{\f12}}}$ to obtain
\begin{equation}\label{est: lower bdd on annulus R1 R2}
\begin{split}
&E(u,\mathcal{A}(R_1,R_2;0,2\pi)) \\
\geq & \left( 9(R_2-R_1)^2+  \f{C_3^2R}{64} \right)^{\f12}\sigma-C_4\\
\geq & \left(3(R_2-R_1)+ \f12 \f{C_3^2R}{64\cdot 3(R_2-R_1)}\right)\sigma-C_4\\
\geq & 3(R_2-R_1)\sigma +\f{C_3^2}{3840}\sigma-C_4. 
\end{split}
\end{equation}

Applying \eqref{est: sharp ene bdd on BR} to $R_1$ and $R_2$ implies that 
\begin{equation}\label{est:upper bdd on A(R1 R2)}
    E(u,\mathcal{A}(R_1,R_2;0,2\pi))  \leq 3(R_2-R_1)\sigma+C_5
\end{equation}

Now we can take $C_3(W,u)$ large enough such that 
\begin{equation*}
    \f{C_3^2\sigma}{3840}-C_4>2C_5,
\end{equation*}
which yields a contradiction. This completes the proof of the claim.

We set
\begin{equation*}
    \theta^R:=\f{\theta_1+\theta_2}{2}.
\end{equation*}
We further show that there exists a constant $C_2(W,u)$ such that one can find $\tilde{\theta}_1\in (\theta^R, \theta^R+C_2R^{-\f12})$ and $\tilde{\theta}_3\in (\theta^R-C_2R^{-\f12}, \theta^R)$ that satisfy
\begin{equation}\label{2 radial segments near theta13}
\begin{split}
    &|u(r,\tilde{\theta}_1)-a_1|\leq \delta, \quad \forall r\in(R_1,R_2),\\
    &|u(r,\tilde{\theta}_3)-a_3|\leq \delta, \quad \forall r\in(R_1,R_2).
\end{split}
\end{equation}
The proof is based on the same idea as the proof of the claim. If for all $\theta\in  (\theta^R, \theta^R+C_2R^{-\f12})$, there exists at least one point $r(\theta)\in(R_1,R_2)$ such that $|u(r(\theta),\theta)-a_1|>\d$, then for such $\theta$, there will be some non trivial energy generated along the radial segment $\{re^{i\theta}: r\in[R_1,R_2]\}$. This is due to the boundary constraint \eqref{close to a1}. Using similar estimates as \eqref{ene est: rad and tang} and \eqref{est: lower bdd on annulus R1 R2}, which split the energy into tangential and radial parts, we can obtain
\begin{equation*}
    \int_{\mathcal{A}(R_1,R_2;0,2\pi)}\left(\f12|\na u|^2+W(u)\right)\,dz\geq 3\sigma(R_2-R_1)+ K_1C_2^2\delta^2-K_2, 
\end{equation*}
where $K_1,K_2$ depend on $W,u$. Taking $C_2$ sufficiently large yields a contradiction with \eqref{est:upper bdd on A(R1 R2)}. The existence of $\tilde{\theta}_3$ follows by the same argument. 

Using \eqref{est: close to a1}, \eqref{close to a1} and \eqref{2 radial segments near theta13}, we find 
\begin{equation*}
    |u(z)-a_1|\leq \delta, \quad\forall z\in\pa \mathcal{A}(R_1,R_2; \tilde{\theta}_1,\f{\pi}{3}).
\end{equation*}
Invoking the variational maximum principle in \cite{AF2}, we obtain
\begin{equation*}
    |u(z)-a_1|\leq \delta, \quad\forall z\in\mathcal{A}(R_1,R_2; \tilde{\theta}_1,\f{\pi}{3}).
\end{equation*}
Similarly,
\begin{equation*}
    |u(z)-a_3|\leq \delta, \quad\forall z\in\mathcal{A}(R_1,R_2; -\f{\pi}{3}, \tilde{\theta}_3).
\end{equation*}
Since $\tilde{\theta}_1, \tilde{\theta}_3$ are in the $C_2R^{-\f12}$ neighborhood of $\theta^R$, \eqref{est: diffuse interface R1/2 close} follows immediately. The exponential decay in \eqref{R1/2 close, exp decay1} and \eqref{R1/2 close, exp decay2} follows from standard estimates using the comparison principle in elliptic theory, see \cite[Lemma 4.5 \& Proposition 5.2]{afs-book} for detailed arguments.  The proof of Lemma \ref{lemma: diffuse interface R1/2} is complete.
 
\end{proof}

Given $R$ large, applying \eqref{est: diffuse interface R1/2 close} to radius $R$ and $2R$ implies 
\begin{equation*}
\begin{split}
&\left(\Gamma_\delta\cap \mathcal{A}(2R,9R;-\f{\pi}{3},\f{\pi}{3})\right)\\
\subset & \bigg[\mathcal{A}\left(2R,9R; \theta^R-C_2R^{-\f12}, \theta^R+C_2R^{-\f12}\right)\cap \mathcal{A}\left(2R,9R; \theta^{2R}-C_2(2R)^{-\f12}, \theta^{2R}+C_2(2R)^{-\f12}\right)\bigg].   
\end{split}
\end{equation*}
Consequently, we get the similar estimate of the closeness for $\theta_{13}^R$ at comparable scalings as in \eqref{angle diff}, but with the improved power,
\begin{equation*}
    |\theta^R-\theta^{2R}|\leq CR^{-\f12}.
\end{equation*}
We argue as in \eqref{est: thR} to get
\begin{equation*}
    |\theta^R|\leq CR^{-\f12}.
\end{equation*}
Furthermore, following the proof of Lemma \ref{lemma:unif exp decay}, we can refine the power $\beta$ in \eqref{est: close to a1}--\eqref{est: grad} to $\f12$. Specifically, we have
\begin{align}
\label{est: close to a1, new}         |u(x,y)-a_1|&\leq Ke^{-k(y-Cx^{\f12})},\quad y\geq Cx^{\f12},\\
   \label{est: close to a3, new}         |u(x,y)-a_3|&\leq Ke^{-k(|y|-Cx^{\f12})},\quad y\leq -Cx^{\f12},\\
   \label{est: grad, new}         |\na u(x,y)|&\leq  Ke^{-k(y-Cx^{\f12})},\quad |y|\geq Cx^{\f12},
\end{align}
when $x>R_0(W,u)$. In short, Lemma \ref{lemma: diffuse interface R1/2} together with the analysis above indicate that the parameters $(\alpha,\beta)$ in Proposition \ref{prop: appr uR} and Lemma \ref{lemma:unif exp decay} can be strengthened to $(0,\f12)$. 

Considering a ray from the origin $\{re^{i\theta}: r\in(0,\infty),\theta=\theta_0\}$, we obtain the following direct consequence of the exponential decay estimates \eqref{est: close to a1, new} and \eqref{est: close to a3, new}, which apply to all three sharp interfaces. 
\begin{lemma}\label{lemma: exp conv along a ray}
  If $\theta\notin \{0,\f{2\pi}{3},\f{4\pi}{3}\}$, then there exist $K,k$ and $R_0$ such that 
  \begin{equation*}
      |u(r,\theta)-a_i|\leq Ke^{-k(r-R_0)},\ \forall r\geq R_0,
  \end{equation*}
  where $a_i$ is the phase associated with $\theta$, that is, $\{re^{i\theta}: r\in(0,\infty),\theta\text{ fixed}\}\in \mathcal{D}_i$.

  Moreover, if $\theta$ belongs to a compact set $\mathcal{K} \subset \left([0,2\pi)\setminus \{0,\f{2\pi}{3},\f{4\pi}{3}\}\right)$, then the constants $K,k,R_0$ can be chosen uniformly with respect to $\mathcal{K}$.  
\end{lemma}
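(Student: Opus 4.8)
The plan is to obtain Lemma~\ref{lemma: exp conv along a ray} directly from the refined decay estimates \eqref{est: close to a1, new}--\eqref{est: close to a3, new}, which---after the obvious rotation---hold near each of the three sharp interfaces $\pa\mathcal{D}_i\cap\pa\mathcal{D}_j$: parametrizing a point near such an interface as $z=s\,\mathbf{e}_{ij}+t\,\mathbf{e}_{ij}^{\perp}$, with $\mathbf{e}_{ij}^{\perp}$ oriented so that $\mathcal{D}_i$ lies on the side $t>0$, one has $|u(z)-a_i|\le Ke^{-k(t-Cs^{1/2})}$ for $t\ge Cs^{1/2}$ and $|u(z)-a_j|\le Ke^{-k(|t|-Cs^{1/2})}$ for $t\le -Cs^{1/2}$, both valid once $s\ge R_0$.

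Now fix $\theta\notin\{0,\tfrac{2\pi}{3},\tfrac{4\pi}{3}\}$ and let $\mathcal{D}_i$ be the sector containing the ray $\{re^{i\theta}:r>0\}$, so that $a_i$ is the phase associated with $\theta$. The two interfaces bounding $\mathcal{D}_i$ have directions differing from $\theta$ by angles $\phi_1,\phi_2>0$ with $\phi_1+\phi_2=\tfrac{2\pi}{3}$, hence $\phi:=\min\{\phi_1,\phi_2\}\le\tfrac{\pi}{3}<\tfrac{\pi}{2}$; I would work with the bounding interface realizing this minimum. Writing $re^{i\theta}=s\,\mathbf{e}+t\,\mathbf{e}^{\perp}$ in the frame of that interface, one has $s=r\cos\phi\ge\tfrac r2$ and $|t|=r\sin\phi$, where the sign of $t$ reflects the side of the interface on which $\mathcal{D}_i$ sits. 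For $r$ large the hypotheses of the corresponding estimate are met: $s>R_0$ holds once $r>2R_0$, and $|t|=r\sin\phi\ge Cs^{1/2}=C(r\cos\phi)^{1/2}$ holds once $r\ge C^2\cos\phi/\sin^2\phi$, a threshold at most $C^2/\sin^2\phi$. Thus, for all $r\ge R_0'$ with $R_0'=R_0'(W,u,\phi)$,
\[
|u(re^{i\theta})-a_i|\le Ke^{-k(r\sin\phi-C(r\cos\phi)^{1/2})}\le Ke^{-\frac{k}{2}r\sin\phi}\le Ke^{-\frac{k\sin\phi}{2}\,r},
\]
where in the middle step $R_0'$ is enlarged so that $C(r\cos\phi)^{1/2}\le\tfrac12 r\sin\phi$. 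Absorbing the factor $e^{\frac{k\sin\phi}{2}R_0'}$ into $K$ and relabeling the rate gives the asserted bound with $R_0=R_0'$.

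For the uniform statement, compactness of $\mathcal{K}\subset[0,2\pi)\setminus\{0,\tfrac{2\pi}{3},\tfrac{4\pi}{3}\}$ furnishes $\delta_0>0$ with $\dist(\theta,\{0,\tfrac{2\pi}{3},\tfrac{4\pi}{3}\})\ge\delta_0$ for all $\theta\in\mathcal{K}$; the selection above then yields, for every $\theta\in\mathcal{K}$, a bounding interface making angle $\phi(\theta)\in[\delta_0,\tfrac{\pi}{3}]$ with the ray, so that $\sin\phi(\theta)\ge\sin\delta_0$ and $\cos\phi(\theta)\ge\tfrac12$ uniformly. Consequently the threshold $R_0'$ and the rate $\tfrac{k\sin\delta_0}{2}$ can be chosen independently of $\theta\in\mathcal{K}$; since $\mathcal{K}$ meets only finitely many (at most three) of the closed sectors $\overline{\mathcal{D}_1},\overline{\mathcal{D}_2},\overline{\mathcal{D}_3}$, one takes the worst of the finitely many resulting constants $K,k,R_0$.

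I expect no serious obstacle: the argument is a substitution into \eqref{est: close to a1, new}--\eqref{est: close to a3, new}. The one point needing attention is the geometric selection of a ``good'' bounding interface, i.e.\ the observation that for any direction interior to a $\tfrac{2\pi}{3}$-sector at least one of the two bounding interfaces makes an angle with the ray lying in $(0,\tfrac{\pi}{3}]\subset(0,\tfrac{\pi}{2})$---this keeps $\cos\phi$ bounded below (so the rotated $x$-coordinate is positive and eventually large) and $\sin\phi$ bounded below (so the exponent is genuinely negative), uniformly as soon as $\theta$ stays away from the interface directions.
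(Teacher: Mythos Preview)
Your argument is correct and is exactly the approach the paper has in mind: the paper omits the proof, stating only that it ``follows immediately from \eqref{est: close to a1, new}--\eqref{est: grad, new},'' and your write-up is precisely the routine substitution (with the geometric selection of the nearer bounding interface to keep $\cos\phi\ge\tfrac12$) that makes this immediate.
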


The proof is omitted here, as it follows immediately from \eqref{est: close to a1, new}--\eqref{est: grad, new}.

\subsection{Small horizontal deformation}

Let $\BR_{x+}^2$ denote the half plane $\{(x,y):x>0\}$. We will derive the key estimate that $\int_{\BR_{x+}^2}|\pa_x u|^2\,dz$ is finite.

For $R$ sufficiently large, define an equilateral triangle $S_R$ by 
    \begin{equation}\label{pa SR}
    \begin{split}
        \pa S_R=&\{(x,y): x=R,\, y\in [-\sqrt{3}R,\sqrt{3}R]\} \bigcup \\
        &\quad \{(x,y):y=\f{\sqrt{3}}{3}(x+2R),\, x\in[-2R,R]\}\bigcup \\
        &\quad \{(x,y):y=\f{-\sqrt{3}}{3}(x+2R),\, x\in[-2R,R]\}.
        \end{split}
    \end{equation}
    $S_R$ is centered at the origin with a side length of $2\sqrt{3}R$. We further define 
    \begin{align*}
        &S^{13}_R:= \{z=(r\cos\theta,r\sin\theta)\in S_R: \theta\in (-\f{\pi}{3},\f{\pi}{3})\},\\
        &S^{12}_R:= \{z=(r\cos\theta,r\sin\theta)\in S_R: \theta\in (\f{\pi}{3},\pi)\},\\
        &S^{23}_R:= \{z=(r\cos\theta,r\sin\theta)\in S_R: \theta\in (\pi,\f{5\pi}{3})\},
    \end{align*}
    which form a 3-partition of $S_R$ such that $S^{ij}_{R}$ is symmetric with respect to the $a_i$-$a_j$ interface. We now carry out some energy estimates on $S_R$.

    \begin{figure}[htt]
    \centering
    \begin{tikzpicture}[thick]
    \fill[red!20] (0,0)--(2,0)--(2,3.464)--(-1,1.732)--(0,0);
    \fill[green!20] (0,0)--(-1,1.732)--(-4,0)--(-1,-1.732)--(0,0);
    \fill[blue!20] (0,0)--(2,0)--(2,-3.464)--(-1,-1.732)--(0,0);
    \draw (-4,0)--(2,3.464) node at (-0.5,0.8) {$S_R^{12}$};
    \draw (2,-3.464)--(2,3.464) node at (1.3,0) {$S_{R}^{13}$};
    \draw (-4,0)--(2,-3.464) node at (-0.5,-0.8) {$S_R^{23}$};
    \draw(0,0)--(2,3.464) node[above right] {$(R,\sqrt{3}R)$};
    \draw(0,0)--(-4,0) node[left] {$(-2R,0)$};
    \draw(0,0)--(2,-3.464) node[below right] {$(R,-\sqrt{3}R)$};
     \end{tikzpicture}
    \caption{Definition of $S_R$ and $S_{R}^{ij}$. Red : $a_1$, green: $a_2$, blue: $a_3$.}
    \label{fig: SR}

    \end{figure}
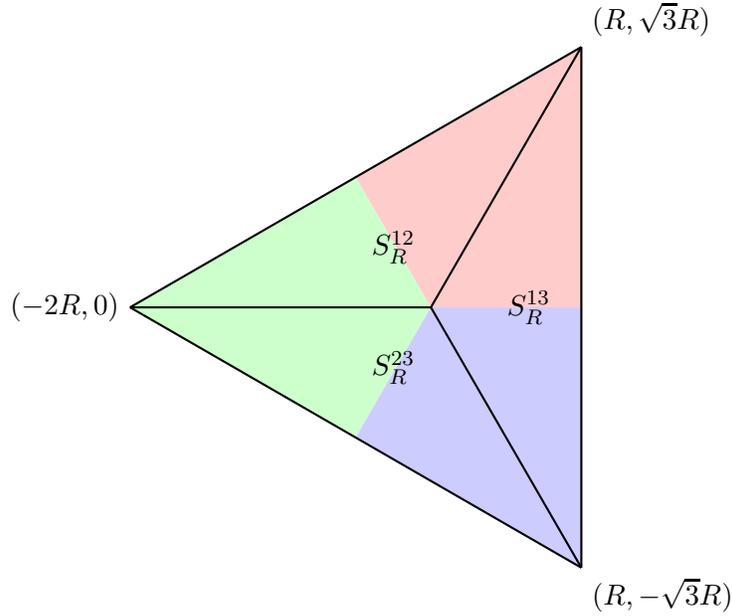
    \begin{lemma}\label{lemma: ene on SR 1}
       The following estimate holds:
       \begin{equation}\label{SR ene}
         \lim\limits_{R\ri\infty} \f{1}{R} E(u,S_R) =3\sigma.   
       \end{equation}
    \end{lemma}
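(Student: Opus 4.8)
The plan is to sandwich $E(u,S_R)$ between $3\sigma R + o(R)$ from above and below, exploiting that $S_R$ is comparable to a ball of radius $R$ and that the exponential decay estimates \eqref{est: close to a1, new}--\eqref{est: grad, new} now hold with power $\frac12$ along all three interfaces. For the lower bound, I would observe that $B_R \subset S_R \subset B_{2R}$, so by Lemma \ref{lemma: sharp bdd on BR}, $E(u,S_R) \ge E(u,B_R) \ge 3\sigma R - C$. For a sharper lower bound matching $3\sigma R$ asymptotically, one slices $S_R \setminus B_{R_0}$ by circles $\partial B_r$, $r \in (R_0, R)$: on each such circle restricted to $S_R$, the map $u$ makes three transitions between $a_1, a_2, a_3$ (using \eqref{est: close to ai on 3 interface} with the improved power $\frac12$ to control the values near the three rays $\theta = 0, \frac{2\pi}{3}, \frac{4\pi}{3}$), contributing tangential-plus-potential energy at least $3\sigma - Ce^{-k(\sqrt{3}r - Cr^{1/2})}$ by Lemma \ref{lemma: 1D energy estimate}; integrating over $r$ gives $E(u, S_R \setminus B_{R_0}) \ge 3\sigma R - C$, hence $\liminf_{R\to\infty} \frac1R E(u,S_R) \ge 3\sigma$.

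For the upper bound, I would again use the ball comparison: $S_R \subset B_{2R}$, so $E(u,S_R) \le E(u,B_{2R}) \le 6\sigma R + C$ by Lemma \ref{lemma: sharp bdd on BR} — but this only gives $\limsup \frac1R E(u,S_R) \le 6\sigma$, which is too weak. To get the matching bound $\limsup \le 3\sigma$, I would instead subtract off the energy that lives in $B_{2R}\setminus S_R$. Since $B_{2R} \setminus S_R$ consists of three regions, each lying near one of the rays $\theta = 0, \frac{2\pi}{3}, \frac{4\pi}{3}$ only at its very tips and otherwise well inside a single sector $\mathcal{D}_i$ (at distance growing linearly in $r$ from $\partial\mathcal{P}$, except within an $O(r^{1/2})$ strip), the exponential decay \eqref{est: close to a1, new}--\eqref{est: grad, new} gives $\frac12|\nabla u|^2 + W(u) \le Ce^{-2k(\dist(z,\partial\mathcal{P}) - C|z|^{1/2})}$ there. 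More carefully: the three corners of $S_R$ sit on the rays $\theta = 0, \pm\frac{\pi}{3}$... wait — the corners $(R, \pm\sqrt3 R)$ and $(-2R,0)$ lie at angles $\pm\frac{\pi}{3}$ and $\pi$, which are exactly the bisectors of the sectors $\mathcal{D}_i$, i.e.\ the regions between consecutive interfaces, so near those corners $u$ is close to a single well and the flux across $\partial S_R$ there is exponentially small; the interfaces $\theta = 0, \frac{2\pi}{3}, \frac{4\pi}{3}$ cross $\partial S_R$ at the midpoints of its three sides. Thus $B_{2R}\setminus S_R$ meets $\partial\mathcal{P}$ only in three bounded neighborhoods of those midpoints (plus the thin $O(r^{1/2})$ diffuse strips), and one shows $E(u, B_{2R}\setminus S_R) \ge 3\sigma R - C$ by the same circle-slicing lower bound applied to the annular region $B_{2R}\setminus B_R$ intersected with the complement of $S_R$ — actually it is cleaner to directly lower-bound $E(u, B_{2R}\setminus S_R)$: on each circle $\partial B_r$ with $R \le r \le 2R$, the portion outside $S_R$ still contains, for $r$ in suitable subranges, a full $a_i$-$a_j$ transition near the side-midpoint, giving $\ge \sigma - Ce^{-kr} $ per relevant arc; a bookkeeping over which side midpoint is "active" at radius $r$ yields $E(u, B_{2R}\setminus S_R) \ge 3\sigma(2R - R) - C = 3\sigma R - C$. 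Combining, $E(u,S_R) = E(u,B_{2R}) - E(u, B_{2R}\setminus S_R) \le (6\sigma R + C) - (3\sigma R - C) = 3\sigma R + C$.

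Putting the two bounds together gives $3\sigma R - C \le E(u, S_R) \le 3\sigma R + C$, hence $\lim_{R\to\infty}\frac1R E(u,S_R) = 3\sigma$, which is \eqref{SR ene}.

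I expect the main obstacle to be the upper-bound step: the naive inclusion $S_R \subset B_{2R}$ loses a factor and one genuinely has to account for the energy of $u$ in $B_{2R}\setminus S_R$, which requires a careful geometric description of how the three interfaces of $\partial\mathcal{P}$ intersect $B_{2R}\setminus S_R$ (they exit near the side-midpoints of $\partial S_R$, while the corners of $S_R$ lie on the harmless sector bisectors) and a clean radius-by-radius accounting of which single $a_i$-$a_j$ transition is captured by the complement region at each $r \in (R,2R)$. Everything else reduces to the circle-slicing lower bound of Lemma \ref{lemma: sharp bdd on BR} combined with the improved exponential decay \eqref{est: close to a1, new}--\eqref{est: grad, new}, which are already available.
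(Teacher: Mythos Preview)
Your argument is correct, and in fact yields the pointwise bound $3\sigma R - C \le E(u,S_R) \le 3\sigma R + C$, which is stronger than the limit statement \eqref{SR ene}. The lower bound via $B_R\subset S_R$ is exactly what the paper does.

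For the upper bound, however, the paper takes a shorter route. Rather than subtracting $E(u,B_{2R}\setminus S_R)$ from $E(u,B_{2R})$ and then producing a circle-slicing \emph{lower} bound on the complement, the paper observes that the triangle $S_R$ barely protrudes past the ball $B_{R_1}$ with $R_1:=\sqrt{R^2+C^2R}=R+O(1)$, and that the overshoot $S_R\setminus B_{R_1}$ lies entirely in the region $\{|y|\ge C x^{1/2}\}$ (and its two rotated copies), where the exponential decay \eqref{est: close to a1, new}--\eqref{est: grad, new} applies directly. A one-line integration then gives $E(u,S_R\setminus B_{R_1})\le C$, so $E(u,S_R)\le E(u,B_{R_1})+C$ and the limit follows from $R_1/R\to 1$ and \eqref{eq: energy limit}. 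The paper thus replaces your subtractive bookkeeping---tracking, for each $r\in(R,2R)$, that each of the three arcs of $\partial B_r\setminus S_R$ carries a full $a_i$--$a_j$ transition once $\sqrt{r^2-R^2}\ge CR^{1/2}$---by a single additive correction that is trivially small. Your approach has the mild advantage of reusing only the transition-counting idea already present in Lemma~\ref{lemma: sharp bdd on BR}, whereas the paper's approach requires spotting the right radius $R_1$ but then avoids all geometric case analysis.
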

\begin{proof}
  Firstly, since $B_R\subset S_R$,
   \begin{equation}\label{est: ene SR lower}
       \lim\limits_{R\ri\infty} \f{1}{R} E(u,S_R) \geq \lim\limits_{R\ri\infty} \f{1}{R} E(u,B_R)=3\sigma.
   \end{equation}
   For the inequality in the other direction, we consider a slightly larger ball $B_{R_1}$ where 
   \begin{equation*}
       R_1(R):=\sqrt{R^2+ C^2R}\sim R+\f{C^2}{2}.
   \end{equation*}
   Here $C$ is the constant in \eqref{est: close to a1, new}--\eqref{est: grad, new}. By \eqref{eq: energy limit}, we have 
   \beqo
   \lim\limits_{R\ri\infty} \f{1}{R}E(u,B_{R_1})=\lim\limits_{R\ri\infty}\f{R_1}{R}\cdot 3\sigma=3\sigma.
   \eeqo
From definition,
   \begin{equation*}
    \begin{split}
      &S_R\setminus B_{R_1}=\bigcup_{i< j} (S^{ij}_R\setminus B_{R_1}),\\
      &S^{13}_R\setminus B_{R_1}=\{(x,y): x\in [\f{R_1}{2},R), |y|\in [\sqrt{R_1^2-x^2},\sqrt{3}x)\}.
    \end{split}   
   \end{equation*}
  In particular, a point $(x,y)\in S_R^{13}\setminus B_{R_1} $ satisfies 
  \beqo
  |y|\geq CR^{\f12}\geq Cx^{\f12}. 
  \eeqo
   Utilizing estimates \eqref{est: close to a1, new}--\eqref{est: grad, new}, we compute
   \begin{align*}
   &\f1R E(u,S_{R}^{13}\setminus B_{R_1}) \\
   \leq & \f2R\left[  \int_{R_1/2}^R\left(\int_{\sqrt{R_1^2-x^2}}^{\sqrt{3}x}  C e^{-2k(y-Cx^{\f12})} \,dy\right)dx  \right]\\
   \leq & \f{C}{R}\int_{R_1/2}^{R} e^{-2k(\sqrt{R_1^2-x^2}-Cx^{\f{1}{2}})}\,dx\\
   \leq &\f{C}{R}.
   \end{align*}
   The same estimates also hold for $S_R^{23}$ and $S_{R}^{12}$. Therefore,
   \begin{equation}\label{est: ene SR upper}
   \begin{split}
      &\lim\limits_{R\ri\infty}\f{1}{R} E(u,S_R) \\
      \leq & \lim\limits_{R\ri\infty} \left(\int_{S_R\setminus B_{R_1}}+\int_{B_{R_1}}\right) \left(\f12|\na u|^2+W(u)\right)\,dz\\
      \leq & \lim\limits_{R\ri\infty} \left(\f{3C}{R}+3\sigma\right)=3\sigma.
   \end{split}  
   \end{equation}
   \eqref{est: ene SR lower} and \eqref{est: ene SR upper} together imply \eqref{SR ene}, which completes the proof.   
\end{proof} 

\begin{lemma}\label{lemma: ene bdd on SR 2}
    There exists a constant $C=C(W,u)$ such that for $R$ sufficiently large, there is a $\tilde{R}\in(R,2R)$ satisfying
    \begin{equation} \label{sharp bdd on StR}
        3\sigma\tilde{R}-C\leq \int_{S_{\tilde{R}}}  \left(\f12|\na u|^2+W(u)\right)\,dz \leq 3\sigma\tilde{R}+C.
    \end{equation}
\end{lemma}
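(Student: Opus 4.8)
The plan is to combine the two-sided bound \eqref{est: sharp ene bdd on BR} on balls with the small energy excess carried by $S_R \setminus B_R$ and $B_{R_1} \setminus S_R$ — which by Lemma \ref{lemma: ene on SR 1} is $O(1)$ — and then to use an averaging (Fubini) argument in the radius to select a good value $\tilde R$. More precisely, first I would establish a clean two-sided bound $|E(u,S_R) - 3\sigma R| \le C$ for \emph{every} large $R$, not just in the limit: the lower bound is immediate from $B_R \subset S_R$ and the lower bound in \eqref{est: sharp ene bdd on BR}; for the upper bound one writes $E(u,S_R) \le E(u,B_{R_1(R)}) + E(u, S_R \setminus B_{R_1})$, bounds the first term by $3\sigma R_1 + C \le 3\sigma R + C$ via \eqref{est: sharp ene bdd on BR} (with $R_1 \sim R + C^2/2$), and bounds the second by the exponential-decay computation already carried out in the proof of Lemma \ref{lemma: ene on SR 1}, which gives $E(u,S_R\setminus B_{R_1}) \le C$. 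This yields the \emph{pointwise-in-}$R$ estimate $|E(u,S_R)-3\sigma R|\le C$.

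Having a two-sided bound for every $R$ is already essentially the statement, but the phrasing of the lemma asks only for \emph{some} $\tilde R \in (R,2R)$; so in fact the above argument proves something slightly stronger and one simply takes, say, $\tilde R = \tfrac{3}{2}R$. If instead one wants to mimic the structure of Lemma \ref{lemma: sharp bdd on BR} and genuinely use an averaging step (which is the natural route if the pointwise bound on $E(u,B_R)$ were not available and one only had the energy limit \eqref{eq: energy limit}), I would proceed as follows. By \eqref{eq: energy limit} and Fubini's theorem, for $R$ large there is $\tilde R \in (R,2R)$ with
\begin{equation*}
3\sigma - \e \le \frac{1}{\tilde R}\int_{\pa B_{\tilde R}} \left(\tfrac12|\pa_T u|^2 + W(u)\right)\,d\ch \le 3\sigma + \e.
\end{equation*}
One then runs the competitor construction from the proof of Lemma \ref{lemma: sharp bdd on BR} — now on the triangular domain $S_{\tilde R}$ rather than the disk, replacing $\pa B_{\tilde R}$ by $\pa S_{\tilde R}$ and using the exponential decay \eqref{est: close to a1, new}--\eqref{est: grad, new} to control $u$ on the three flat sides of $\pa S_{\tilde R}$ away from $O(1)$ transition arcs — to get $E(u,S_{\tilde R}) \le 3\sigma \tilde R + C$. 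The lower bound $E(u,S_{\tilde R}) \ge E(u,B_{\tilde R}) \ge 3\sigma\tilde R - C$ comes for free from \eqref{est: sharp ene bdd on BR} since $B_{\tilde R} \subset S_{\tilde R}$.

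The main obstacle, such as it is, is bookkeeping rather than conceptual: one must verify that the competitor construction of Lemma \ref{lemma: sharp bdd on BR} (built on annuli $\{\tilde R - 1 \le |z| \le \tilde R\}$ and the interior filling of \cite[Appendix A]{alikakos2024triple}) adapts verbatim when the outer boundary is the triangle $\pa S_{\tilde R}$ instead of a circle — the three sides of $\pa S_{\tilde R}$ each lie within distance $O(1)$ of the corresponding ray, $u$ is exponentially close to a single phase there except near three $O(1)$-length corners/transition regions, so the boundary energy on $\pa S_{\tilde R}$ is again $\le 3\sigma\tilde R + C$ and the interpolation layer and interior filling contribute only $O(1)$. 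Given that, the minimality of $u$ closes the argument exactly as before. In practice, since the first paragraph already gives $|E(u,S_R)-3\sigma R| \le C$ for all large $R$, I would present that shorter route and simply record \eqref{sharp bdd on StR} with $\tilde R$ any chosen point of $(R,2R)$.
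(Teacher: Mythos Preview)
Your first route is correct and in fact cleaner than the paper's own argument. The paper does \emph{not} take this shortcut; it follows essentially your second route: Fubini applied to Lemma~\ref{lemma: ene on SR 1} selects $\tilde R \in (R,2R)$ with $\int_{\pa S_{\tilde R}}(\tfrac12|\pa_T u|^2 + W(u))\,d\ch \le 3\sigma + \e$, and then an explicit competitor is built on $S_{\tilde R}$ (a width-one interpolation layer inward from $\pa S_{\tilde R}$, heteroclinic profiles $U_{ij}(\dist(\cdot,l_{ij}))$ along approximate interfaces in each $S^{ij}_{\tilde R}$, linear fill elsewhere) to obtain the upper bound by minimality; the lower bound is handled as you say.

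Your key observation---that the exponential-decay computation in the proof of Lemma~\ref{lemma: ene on SR 1} already delivers the \emph{uniform} bound $E(u,S_R\setminus B_{R_1})\le C$, not merely $o(R)$---combined with $E(u,B_{R_1})\le 3\sigma R_1 + C$ from \eqref{est: sharp ene bdd on BR} and $R_1 - R = O(1)$, yields $E(u,S_R)\le 3\sigma R + C$ for \emph{every} large $R$. So no Fubini selection and no competitor construction are needed, and the lemma in fact holds for any $\tilde R\in(R,2R)$. This is strictly simpler and slightly stronger than what the paper proves. The paper's competitor argument has the virtue of being self-contained (it does not route back through the ball estimate \eqref{est: sharp ene bdd on BR}) and structurally parallels the proof of Lemma~\ref{lemma: sharp bdd on BR}, but given the logical order of results already established your shortcut is entirely legitimate.
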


\begin{proof}

The lower bound follows immediately by the same arguments as in \eqref{est: close to ai on 3 interface} and \eqref{est: sharp lower bdd on BR}. We are only left to prove the upper bound.

Fix a small constant $\e\ll 1$, whose value will depend on the constant $\delta$ introduced later. By Lemma \eqref{lemma: ene on SR 1} and Fubini's Theorem, for sufficiently large $R$, there exists a $\tilde{R}\in (R,2R)$ satisfying 
    \begin{equation}\label{upper bdd on pa StR}
        \int_{\pa S_{\tilde{R}}}\left(\f12 |\pa_T u |^2+W(u)\right)\,d\mathcal{H}^1\leq 3\sigma+\e.
    \end{equation}

We first observe that substituting $r=\tilde{R}$ in \eqref{est: close to ai on 3 interface} and Lemma \ref{lemma: 1D energy estimate} together yield 
\begin{equation*}
    \int_{\{\pa S_{\tilde{R}}\setminus \pa S_{\tilde{R}}^{13}\}} \left(\f12|\pa_T u|^2+W(u)\right)\,d\mathcal{H}^1\geq 2\sigma-Ce^{-k\tilde{R}}\geq 2\sigma-\e,
\end{equation*}
when $\tilde{R}$ is large enough. Combining this with \eqref{upper bdd on pa StR}, we obtain
\begin{equation}\label{est: ene vertical x=R}
    \int_{-\sqrt{3}\tilde{R}}^{\sqrt{3}\tilde{R}} \left(\f12|\pa_y u(\tilde{R},y)|^2+W(u(\tilde{R},y))\right)\,dy\leq \sigma+2\e. 
\end{equation}

This further implies that there is a $y_{\tilde{R}}$ satisfying $|y_{\tilde{R}}|\leq C\tilde{R}^{\f12}$ and 
\begin{equation}\label{loc of y_tR}
\begin{split}
    |u(\tilde{R}, y)-a_1|&\leq \delta,\quad y\in (y_{\tilde{R}}+C(W,\delta), \sqrt{3}\tilde{R}),\\
    |u(\tilde{R}, y)-a_3|&\leq \delta,\quad y\in (-\sqrt{3}\tilde{R}, y_{\tilde{R}}-C(W,\delta)),
\end{split}
\end{equation}
where $\delta\ll 1$ is a fixed small constant depending only on $W$.  Once $\delta$ is fixed, the value of $\e$ can be adjusted to ensure the existence of $y_{\tilde{R}}$. 

Next we construct an energy competitor $v$ on $S_{\tilde{R}}$ such that $v=u$ on $\pa S_{\tilde{R}}$ and $v$ satisfies the energy upper bound in \eqref{sharp bdd on StR}. Then the upper bound for $u$ follows from minimality.  

Fix a large enough $r_0$ such that $x>2Cx^{\f12}$ when $x>r_0$, where $C$ is the constant in estimates \eqref{est: close to a1, new} and \eqref{est: close to a3, new}. Additionally, assume $\tilde{R}$ is much larger than $r_0$. For the region $S^{13}_{\tilde{R}}\setminus S^{13}_{r_0}$, we first define $v(x,y)$ on the boundary: 
\begin{equation}\label{def of v on bdy StR}
    v(x,y)=\begin{cases}
    a_1, & \text{ on }\{x\in[r_0,\tilde{R}-1], y=\sqrt{3}x\},\\
    a_3, & \text{ on }\{x\in[r_0,\tilde{R}-1], y=-\sqrt{3}x\},\\
    (\tilde{R}-x)a_1+(x-(\tilde{R}-1))u(\tilde{R},\sqrt{3}\tilde{R}), &\text{ on } \{x\in (\tilde{R}-1,\tilde{R}), y=\sqrt{3}x\},\\
    (\tilde{R}-x)a_3+(x-(\tilde{R}-1))u(\tilde{R},-\sqrt{3}\tilde{R}), &\text{ on } \{x\in (\tilde{R}-1,\tilde{R}), y=-\sqrt{3}x\},\\
    u(x,y),& \text{ on }\{x=\tilde{R}, y\in[-\sqrt{3}\tilde{R},\sqrt{3}\tilde{R}]\}.
    \end{cases}
\end{equation}

Let $l_{13}$ denote the line segment connecting $(r_0,0)$ and $(\tilde{R}-1, y_{\tilde{R}})$. The slope of $l_{13}$ is $O(\tilde{R}^{-\f12})$.  We define
\begin{equation}\label{def of v in StR}
v(x,y)=\begin{cases}
  U_{13}(\dist((x,y),l_{13})), & \text{ on }S_1:=\{x\in [r_0,\tilde{R}-1], y\in [-x,x]\},\\[2.5ex]
  \begin{split}
  &\text{vertical linear interpolation between }\\[-6pt]
  &\{y= x\}\text{ and }\{y=\sqrt{3}x\},
  \end{split}
  &\text{ on }S_2:=\{x\in [r_0,\tilde{R}-1], y\in (x,\sqrt{3}x)\},\\[2.5ex]
  \begin{split}
  &\text{vertical linear interpolation between }\\[-6pt]
  &\{y=-\sqrt{3} x\}\text{ and }\{y=-x\},
  \end{split}
  &\text{ on }S_3:=\{x\in [r_0,\tilde{R}-1], y\in (-\sqrt{3}x,-x)\},\\[2.5ex]
  \begin{split}
  &\text{horizontal linear interpolation}\\[-6pt]
  &\text{between boundary data, }
  \end{split}
  &\text{ on }S_4:=\{x\in(\tilde{R}-1,\tilde{R}), y\in (-\sqrt{3}x, \sqrt{3}x)\}.
\end{cases}
\end{equation}

We compute the energy in $S_1$--$S_4$. 
\begin{equation*}
        E(v,S_1) 
        \leq  \sigma\sqrt{(\tilde{R}-1-r_0)^2+C^2\tilde{R}}\leq \sigma(\tilde{R}-r_0+C).
\end{equation*}
\begin{equation*}
    E(v,S_2\cup S_3) \leq 2\int_{r_0}^{\tilde{R}-1} \int_{x}^{\sqrt{3}x}Ce^{-ky}\,dydx\leq  C(W,u).
\end{equation*}
\begin{equation*}
  E(v,S_4) 
        \leq  \int_{S_4} \f12|\pa_x u|^2\,dz + \int_{S_4} \f12|\pa_y u|^2\,dz+ \int_{S_4} W(u)\,dz
\end{equation*}
For fixed y,  by definition we have 
\begin{equation*}
\int_{\tilde{R}-1}^{\tilde{R}} |\pa_x v(x,y)|^2\,dx=|v(\tilde{R},y)-v(\tilde{R}-1,y)|^2.    
\end{equation*}
Therefore,
\begin{equation*}
    \begin{split}
       &\int_{S_4} \f12|\pa_x v|^2\,dz\\
       \leq&  \f12\int_{-\sqrt{3}(\tilde{R}-1)}^{\sqrt{3}(\tilde{R}-1)} |v(\tilde{R},y)-v(\tilde{R}-1,y)|^2 \,dy +C\\
       \leq& C+  \int_{y_{\tilde{R}}+C}^{\sqrt{3}(\tilde{R}-1)} \left(|v(\tilde{R}-1,y)-a_1|^2 + |u(\tilde{R},y)-a_1|^2\right)\,dy\\
       & \quad + \int^{y_{\tilde{R}}-C}_{-\sqrt{3}(\tilde{R}-1)} \left(|v(\tilde{R}-1,y)-a_3|^2 + |u(\tilde{R},y)-a_3|^2\right)\,dy\\
       \leq &C+ C \int_{-\sqrt{3}(\tilde{R}-1)}^{\sqrt{3}(\tilde{R}-1)}  \left(W(v(\tilde{R}-1,y))+W(u(\tilde{R}, y))\right)\,dy\leq C
    \end{split}
\end{equation*}
where we have used \eqref{loc of y_tR} to get the second inequality.
Similarly, 
\begin{equation*}
    \begin{split}
        &\int_{S_4} W(v)\,dz\\
        \leq &C+ C \int_{\tilde{R}-1}^{\tilde{R}}\left(\int_{y_{\tilde{R}}+C}^{\sqrt{3}(\tilde{R}-1)} |v(x,y)-a_1|^2 dy+ \int^{y_{\tilde{R}}-C}_{-\sqrt{3}(\tilde{R}-1)} |v(x,y)-a_3|^2 \,dy\right)\,dx\\
        \leq &C+ C \int_{-\sqrt{3}(\tilde{R}-1)}^{\sqrt{3}(\tilde{R}-1)}  \left(W(v(\tilde{R}-1,y))+W(v(\tilde{R}, y))\right)\,dy\leq C
    \end{split}
\end{equation*}
Moreover, we have 
\begin{equation*}
    \begin{split}
        &\int_{S_4} \f12|\pa_y v|^2\,dz\\
        \leq & C+C\int_{-\sqrt{3}(\tilde{R}-1)}^{\sqrt{3}(\tilde{R}-1)}  \left( |\pa_y v(\tilde{R}-1,y)|^2+|\pa_y v(\tilde{R},y)|^2
 \right)\,dy\leq C
    \end{split}
\end{equation*}

Adding up all estimates above implies 
\begin{equation}
 \label{est: ene v S13}  E(v,S^{13}_{\tilde{R}}\setminus S_{r_0}^{13}) \leq \sigma \tilde{R} +C
\end{equation}
The same estimate also holds for $S^{12}_{\tilde{R}}\setminus S_{r_0}^{12}$ and $S^{23}_{\tilde{R}}\setminus S_{r_0}^{23}$.

Within the triangle $S_{r_0}$, the previous construction ensures that $v|_{\pa S_{r_0}}$ is Lipschitz. Hence one can extend $v$ to the interior of $S_{r_0}$ such that 
\begin{equation}\label{est: ene v Sr0}
    E(v,S_{r_0})\leq C.
\end{equation}

Combining \eqref{est: ene v S13} and \eqref{est: ene v Sr0} we get 
\begin{equation*}
    E(v,S_{\tilde{R}}) \leq 3\sigma \tilde{R}+C.
\end{equation*}
This completes the proof of Lemma \ref{lemma: ene bdd on SR 2}.

\end{proof}

Our next lemma asserts that the horizontal deformation of $u$ on $\BR^2_{x+}$ is bounded and small.

\begin{lemma}
    There is a constant $C=C(W,u)$ such that 
    \begin{equation}\label{ene bdd: px u on x+}
        \int_{\BR^2_{x+}}|\pa_x u|^2\,dxdy\leq C.
    \end{equation}
\end{lemma}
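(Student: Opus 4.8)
The strategy is to extract from the sharp two–sided bound of Lemma~\ref{lemma: ene bdd on SR 2} a lower bound on $E(u,S_{\tilde R})$ that carries an extra, uncancelled copy of $\int\tfrac12|\pa_x u|^2$ over the right sub-triangle $S^{13}_{\tilde R}$; comparing it with the matching upper bound then forces that term to be $O(1)$, and a monotone limiting argument takes care of the full half-plane.

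Concretely, I would pick a sequence $R_n\to\infty$ and apply Lemma~\ref{lemma: ene bdd on SR 2} to obtain $\tilde R_n\in(R_n,2R_n)$, so that $\tilde R_n\nearrow\infty$ and $E(u,S_{\tilde R_n})\le 3\sigma\tilde R_n+C$. Split $E(u,S_{\tilde R_n})=E(u,S^{13}_{\tilde R_n})+E(u,S^{12}_{\tilde R_n})+E(u,S^{23}_{\tilde R_n})$. By the $\tfrac{2\pi}{3}$–rotational symmetry of $S_{\tilde R}$, in coordinates $(s,t)$ adapted to the $a_i$–$a_j$ interface each $S^{ij}_{\tilde R}$ is the triangle $\{0<s<\tilde R,\ |t|<\sqrt3\,s\}$; slicing perpendicular to the interface, for $s>r_0$ (a fixed large constant) the slice $u(s,\cdot)$ performs the $a_i\to a_j$ transition with endpoints at distance $\le Ke^{-k(\sqrt3 s-Cs^{1/2})}$ from $a_i,a_j$, by the refined decay estimates \eqref{est: close to a1, new}--\eqref{est: grad, new}, which hold for all three interfaces. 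Lemma~\ref{lemma: 1D energy estimate} then gives that its perpendicular energy is $\ge\sigma-Ce^{-2k(\sqrt3 s-Cs^{1/2})}$, exactly as in the lower-bound computation \eqref{est: sharp lower bdd on BR}. Integrating in $s\in(r_0,\tilde R_n)$ and discarding the (nonnegative) contribution of $s<r_0$ yields $E(u,S^{ij}_{\tilde R_n})\ge\sigma\tilde R_n-C$ for each pair, and for the pair $13$, where the perpendicular direction is precisely $y$, one keeps the discarded $x$-derivative term:
\[
E(u,S^{13}_{\tilde R_n})\ \ge\ \int_{S^{13}_{\tilde R_n}}\tfrac12|\pa_x u|^2\,dz\ +\ \sigma\tilde R_n-C.
\]
Adding the three estimates and comparing with $E(u,S_{\tilde R_n})\le 3\sigma\tilde R_n+C$ gives $\int_{S^{13}_{\tilde R_n}}|\pa_x u|^2\,dz\le C$ with $C$ independent of $n$. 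Since $S^{13}_{\tilde R_n}\nearrow S^{13}_\infty:=\{x>0,\ |y|<\sqrt3\,x\}$, monotone convergence yields $\int_{S^{13}_\infty}|\pa_x u|^2\,dz\le C$.

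It then remains to estimate $\int_{\BR^2_{x+}\setminus S^{13}_\infty}|\pa_x u|^2=\int_{\{x>0,\ |y|\ge\sqrt3\,x\}}|\pa_x u|^2$. The portion inside $B_{R_0}$ is a bounded set, so its contribution is $\le M^2|B_{R_0}|$ by \eqref{reg of u}. Outside $B_{R_0}$, every point of this region has polar angle in the compact set $[\tfrac\pi3,\tfrac\pi2]\cup[-\tfrac\pi2,-\tfrac\pi3]$, which avoids $\{0,\tfrac{2\pi}{3},\tfrac{4\pi}{3}\}$; hence Lemma~\ref{lemma: exp conv along a ray} gives $|u(r,\theta)-a_i|\le Ke^{-k(r-R_0)}$ uniformly there, and interior elliptic estimates upgrade this to $|\na u(r,\theta)|\le Ke^{-k(r-R_0)}$. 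Integrating in polar coordinates shows this contribution is finite as well, and summing the three pieces proves \eqref{ene bdd: px u on x+}.

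The delicate point is the lower-bound step: one must verify that, on each $S^{ij}_{\tilde R}$, the perpendicular slices actually realize the phase transition and do so with endpoints exponentially close to the wells (so that the corrections coming from Lemma~\ref{lemma: 1D energy estimate} are summable in $s$), and that only the perpendicular component of $\na u$ is used, so that the full $|\pa_x u|^2$ survives on $S^{13}_{\tilde R}$. It is the sharpness of the upper bound $E(u,S_{\tilde R})\le 3\sigma\tilde R+C$ that makes the resulting inequality collapse and isolates $\int|\pa_x u|^2$; an estimate with an $O(\tilde R)$ error would be useless here.
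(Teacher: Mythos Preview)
Your proof is correct and rests on the same mechanism as the paper's: the sharp two–sided bound $3\sigma\tilde R-C\le E(u,S_{\tilde R})\le 3\sigma\tilde R+C$ from Lemma~\ref{lemma: ene bdd on SR 2}, together with a lower bound produced by one–dimensional slicing that uses only the component of $\nabla u$ transversal to the $a_1$--$a_3$ interface, so that the longitudinal piece $\int\tfrac12|\pa_x u|^2$ survives and is forced to be $O(1)$.

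The only real difference is the geometric decomposition. The paper does not cut $S_{\tilde R}$ into the three congruent sectors $S^{ij}_{\tilde R}$; instead it splits $S_{\tilde R}$ into $\{x<R_0\}$ and $\{x>R_0\}$, foliates the two left pieces by segments of slope $\pm 1/\sqrt3$ running from the negative $x$-axis (deep in $\mathcal D_2$) to the line $x=R_0$ (this brings in a Jacobian factor $\tfrac12$ in \eqref{est: lower bdd on left upper tri in SR}--\eqref{est: lower bdd on left bottom tri in SR}), deduces $E(u,S_{\tilde R}\cap\{x>R_0\})\le\sigma\tilde R+C$, and only then slices the right part vertically to peel off $\int|\pa_x u|^2$. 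Your threefold partition is more symmetric, avoids the Jacobian bookkeeping, and reaches the same conclusion in a single comparison; the paper's route has the minor advantage that it directly yields Corollary~\ref{corol: ene in 0<x<R} on the vertical strip $\{0<x<R\}$ as a byproduct. Either way the remainder outside the sliced region is handled by the uniform exponential decay of Lemma~\ref{lemma: exp conv along a ray}, exactly as you do.
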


\begin{proof}
In the following, we will use the universal constants $K$, $k$, and $R_0$, which depend only on $W$ and $u$. When the constants differ across various estimates, they can be adjusted as needed without compromising the validity of the estimates.

For each $x$, define the vertical line
    \begin{equation*}
        l_x:=\{(x,y): y\in(-\infty,\infty)\}.
    \end{equation*}
 By the localization of the diffuse interface, outside the finite disk $B_{R_0}$, the diffuse interface $\G_{\d}$  is located in a small neighborhood of the limit interface $\pa\mathcal{P}$, where 
    \begin{equation*}
        \pa\mathcal{P}=\{(x,0): x>0\}\cup \{(x,-\sqrt{3}x): x<0\}\cup \{(x,\sqrt{3}x):x<0\}. 
    \end{equation*}

The size of the neighborhood is measured by $O( R^{\f12})$, i.e.
\begin{equation*}
    \dist(\G_\d\cap \pa B_{R},\  \pa\mathcal{P}\cap \pa B_{R})\leq C(W,\delta,u)R^{\f12},\ \ \forall R\geq R_0.
\end{equation*}

Thanks to the exponential decay of the distance of $u(x,y)$ to $a_i$ away from the diffuse interface, it follows that for any $x>0$, $u(x,y)$ converges to $a_1$($a_3$) exponentially as $y$ goes to $+\infty$($-\infty$). And $|\na u|$ converges exponentially to $0$ as $|y|$ goes to $+\infty$. Therefore,
\begin{equation*}
    \int_{-\infty}^\infty |\pa_x u(x,y)|^2\,dy<\infty, \ \ \forall x>0.
\end{equation*}

Consequently, to prove \eqref{ene bdd: px u on x+}, it suffices to show
\begin{equation}\label{est: px u on x>R0}
    \int_{\{x>R_0\}} |\pa_x u|^2\,dxdy<\infty,
\end{equation}
for a constant $R_0$ such that \eqref{est: close to a1, new}--\eqref{est: grad, new} hold for $x>R_0$ and Lemma \eqref{lemma: ene bdd on SR 2} applies for $R>R_0$.

Applying \eqref{est: close to a1, new} and \eqref{est: close to a3, new} to $x=R_0$ implies that 
\begin{align*}
|u(R_0,y)-a_1|&\leq Ke^{-k(y-CR_0^{\f12})},\quad y\geq CR_0^{\f12},\\
|u(R_0,y)-a_3|&\leq Ke^{-k(|y|-CR_0^{\f12})},\quad y\leq -CR_0^{\f12}.
\end{align*}
Furthermore, since $\{(x,0):x<0\}$ is the bisector of $\mathcal{D}_2$, by Lemma \ref{lemma: exp conv along a ray} we have
\begin{equation*}
     |u(x,0)-a_2|\leq Ke^{-k(|x|-R_0)}, \quad x<-R_0.
\end{equation*}

For any $r<-2R_0$, we consider the line segment $\{(x,y):x\in[r,R_0], y=\f{\sqrt{3}}{3}(x-r)\}$, which will be exponentially close to $a_2$ and $a_1$ at two endpoints, due to the estimates above. Therefore, Lemma \ref{lemma: 1D energy estimate} implies that the 1D energy on this line segment is bounded from below by $\sigma-Ce^{-k|r|}$. We have for any $R>R_0$, 
\begin{equation}\label{est: lower bdd on left upper tri in SR}
\begin{split}
    &\int_{-2R}^{R_0}\int_{0}^{\f{\sqrt{3}}{3}(x+2R)} \left(\f12|\na u |^2+W(u)\right)\,dydx\\
\geq &\f12\int_{-2R}^{-2R_0} (\sigma-Ce^{-k|r|})\,dr\geq \sigma R-C. 
\end{split}    
\end{equation}
Similarly,
\begin{equation} \label{est: lower bdd on left bottom tri in SR}
    \int_{-2R}^{R_0}\int^{0}_{-\f{\sqrt{3}}{3}(x+2R)} \left(\f12|\na u |^2+W(u)\right)\,dydx \geq \sigma R-C. 
\end{equation}

For any $R>R_0$, by Lemma \ref{lemma: ene bdd on SR 2} there exists a $\tilde{R}\in(R,2R)$ such that 
\begin{equation}\label{est: upper bdd on StR}
    E(u,S_{\tilde{R}}) \leq  3\sigma\tilde{R}+C. 
\end{equation}
This, together with \eqref{est: lower bdd on left upper tri in SR} and \eqref{est: lower bdd on left bottom tri in SR} applying to $\tilde{R}$, yields that 
\begin{equation}\label{est: upper bdd on right in SR}
    E(u,S_{\tilde{R}}\cap \{x>R_0\})\leq \sigma \tilde{R}+C. 
\end{equation}

Note that for any $x\in(R_0, \tilde{R})$, the vertical line $l_x$ intersects $\pa S_{\tilde{R}}$ at two points, $(x,\f{\sqrt{3}}{3}(x+2\tilde{R}))$ and $(x,-\f{\sqrt{3}}{3}(x+2\tilde{R}))$. \eqref{est: close to a1, new} and \eqref{est: close to a3, new} imply that 
\begin{equation*}
    |u(x,\f{\sqrt{3}}{3}(x+2\tilde{R}))-a_1|\leq Ke^{-k\tilde{R}},\ \ |u(x,\f{-\sqrt{3}}{3}(x+2\tilde{R}))-a_3|\leq Ke^{-k\tilde{R}},
\end{equation*}
which leads to 
\begin{equation*}
    \int_{-\f{\sqrt{3}}{3}(x+2\tilde{R})}^{\f{\sqrt{3}}{3}(x+2\tilde{R})} \left(\f12|\pa_y u(x,y)|^2+W(u(x,y))\right)\,dy\geq \sigma-Ce^{-2k\tilde{R}}
\end{equation*}
thanks to Lemma \ref{lemma: 1D energy estimate}. From \eqref{est: upper bdd on right in SR} we have
\begin{equation}\label{est: hor gradient on right of SR}
    \begin{split}
        &\int_{S_{\tilde{R}}\cap \{x>R_0\}} \f12|\pa_x u|^2 \,dz\\
        \leq & (\sigma \tilde{R}+C) -\int_{S_{\tilde{R}}\cap \{x>R_0\}} \left( \f12|\pa_y u|^2+W(u)\right)\,dz\\
        \leq & (\sigma \tilde{R}+C) -(\tilde{R}-R_0)(\sigma-Ce^{-2k\tilde{R}}) \leq C,
    \end{split}
\end{equation}
where the last constant $C$ depends only on $W$ and $u$. The final step is to use \eqref{est: grad, new} to get
\begin{equation*}
\lim\limits_{\tilde{R}\ri\infty}\int_{R_0}^{\tilde{R}}\int_{|y|>\f{\sqrt{3}}{3}(x+2\tilde{R})} |\pa_x u|^2\,dydx=0.    
\end{equation*}

Combining this and \eqref{est: hor gradient on right of SR} gives
\begin{equation*}
    \int_{R_0<|x|<\tilde{R}} |\pa_x u|^2\,dz\leq C,
\end{equation*}
where $C$ is independent of $\tilde{R}$. Now \eqref{est: px u on x>R0} follows immediately by letting $\tilde{R}\ri\infty$. The proof is complete.
\end{proof}

\begin{corol}\label{corol: ene in 0<x<R}
    There exists a constant $C=C(W,u)$ such that 
    \begin{equation}
        \label{est: ene on x>0}
        \int_{\{0<x<R\}} \left(\f12|\na u|^2+W(u)\right)\,dz\leq \sigma R+C, \quad \forall R>0.
    \end{equation}
\end{corol}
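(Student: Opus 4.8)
The plan is to squeeze the half‑strip $\{0<x<R\}$ between the truncated triangle $S_{\tilde R}$ supplied by Lemma~\ref{lemma: ene bdd on SR 2} and its complement, exploiting that the energy of $u$ on $S_{\tilde R}$ to the right of the line $\{x=R_0\}$ is already controlled by $\sigma\tilde R+O(1)$ through \eqref{est: upper bdd on right in SR}, and then recovering the sharp constant $\sigma R$ by subtracting off the essentially optimal slab $\{R\le x<\tilde R\}$. I would fix $R_0=R_0(W,u)$ large enough that \eqref{est: close to a1, new}--\eqref{est: grad, new} hold for $x>R_0$, that Lemma~\ref{lemma: ene bdd on SR 2} applies for radius $>R_0$, and that $\tfrac{2}{\sqrt3}R_0\ge CR_0^{1/2}$ with $C$ the constant in those decay estimates.

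If $0<R\le R_0$ the bound is immediate, since $\{0<x<R\}\subset\{0<x\le R_0\}$ and the energy of $u$ on this fixed half‑strip is a finite constant depending only on $W$ and $u$: the portion with $|y|\le R_0$ lies in $B_{\sqrt2 R_0}$ and is bounded by Lemma~\ref{lemma: sharp bdd on BR}, while on the portion with $|y|>R_0$ every point lies on a ray whose angle belongs to a compact subset of $[0,2\pi)\setminus\{0,\tfrac{2\pi}{3},\tfrac{4\pi}{3}\}$, so Lemma~\ref{lemma: exp conv along a ray} (after possibly enlarging $R_0$) gives $|u-a_i|+|\nabla u|\le Ke^{-k(|y|-R_0)}$ there and the integral over $|y|>R_0$ converges.

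Now suppose $R>R_0$ and let $\tilde R\in(R,2R)$ be as in Lemma~\ref{lemma: ene bdd on SR 2}. From \eqref{pa SR}, a point $(x,y)$ with $x<\tilde R$ lies in $S_{\tilde R}$ iff $|y|\le\tfrac{\sqrt3}{3}(x+2\tilde R)$, so I split
\[
\int_{\{0<x<R\}}\!\!\Big(\tfrac12|\nabla u|^2+W(u)\Big)dz=\int_{\{0<x\le R_0\}}\!\!\Big(\tfrac12|\nabla u|^2+W(u)\Big)dz+E\big(u,S_{\tilde R}\cap\{R_0<x<R\}\big)+\int_{\{R_0<x<R,\,|y|>\frac{\sqrt3}{3}(x+2\tilde R)\}}\!\!\Big(\tfrac12|\nabla u|^2+W(u)\Big)dz.
\]
The first summand is $\le C(W,u)$ by the same argument as in the case $R\le R_0$. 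In the third summand one has $|y|>\tfrac{2}{\sqrt3}R\ge CR^{1/2}\ge Cx^{1/2}$, so \eqref{est: close to a1, new}--\eqref{est: grad, new} make the integrand $\le Ce^{-2k(|y|-CR^{1/2})}$, and integrating over $x\in(R_0,R)$ and $|y|>\tfrac{2}{\sqrt3}R$ gives a quantity $\le R\,Ce^{-2k(\frac{2}{\sqrt3}R-CR^{1/2})}\le C(W,u)$. For the middle summand I write $E\big(u,S_{\tilde R}\cap\{R_0<x<R\}\big)=E\big(u,S_{\tilde R}\cap\{x>R_0\}\big)-E\big(u,S_{\tilde R}\cap\{R\le x<\tilde R\}\big)$; the first term is $\le\sigma\tilde R+C$ by \eqref{est: upper bdd on right in SR}, and for the second term, for each $x\in(R,\tilde R)$ the vertical slice $S_{\tilde R}\cap l_x$ joins $(x,\tfrac{\sqrt3}{3}(x+2\tilde R))$ to $(x,-\tfrac{\sqrt3}{3}(x+2\tilde R))$, endpoints at which $|u-a_1|,|u-a_3|\le Ke^{-k\tilde R}$ by \eqref{est: close to a1, new}--\eqref{est: close to a3, new}; hence Lemma~\ref{lemma: 1D energy estimate}, applied to the one‑dimensional minimizer with these boundary data whose energy is dominated by that of $u$ on the slice, yields $\int_{S_{\tilde R}\cap l_x}\big(\tfrac12|\partial_y u|^2+W(u)\big)dy\ge\sigma-Ce^{-2k\tilde R}$, so that $E\big(u,S_{\tilde R}\cap\{R\le x<\tilde R\}\big)\ge(\tilde R-R)(\sigma-Ce^{-2k\tilde R})$. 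Subtracting and using $\tilde R-R<R$ gives $E\big(u,S_{\tilde R}\cap\{R_0<x<R\}\big)\le\sigma R+C$, and adding the three summands proves \eqref{est: ene on x>0}.

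I do not anticipate a genuine obstacle. The only point requiring care is that the crude bound \eqref{est: upper bdd on right in SR} carries $\tilde R$ (possibly as large as $2R$) rather than $R$, so the sharp constant $\sigma R$ must be regained by subtracting the slab $\{R\le x<\tilde R\}$, whose vertical slices each carry at least $\sigma$ worth of energy up to an exponentially small error — precisely the content of Lemma~\ref{lemma: 1D energy estimate} combined with the refined decay \eqref{est: close to a1, new}--\eqref{est: grad, new}. A secondary nuisance is that $\{0<x<R\}$ is an infinite strip, which forces one to dispose of the unbounded $y$‑range via the exponential decay estimates, both near the origin (the $R\le R_0$ part) and outside $S_{\tilde R}$.
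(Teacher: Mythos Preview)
Your proposal is correct and follows essentially the same route as the paper: the paper's proof cites precisely the three ingredients you use --- the finite-energy bound on the fixed strip $\{0<x<R_0\}$ via Lemma~\ref{lemma: exp conv along a ray}, the estimate \eqref{est: upper bdd on right in SR} on $S_{\tilde R}\cap\{x>R_0\}$, and the decay \eqref{est: close to a1, new}--\eqref{est: grad, new} --- and your write-up simply spells out the subtraction of the slab $\{R\le x<\tilde R\}$ (each slice carrying $\ge\sigma-Ce^{-2k\tilde R}$) that the paper leaves implicit. One cosmetic point: invoking Lemma~\ref{lemma: sharp bdd on BR} for $B_{\sqrt2 R_0}$ presupposes $\sqrt2 R_0>R_1$, but this is harmless since the energy on any fixed bounded set is finite by \eqref{reg of u}.
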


\begin{proof}
   Let $R_0=R_0(W,u)$ be the same constant as in \eqref{est: px u on x>R0}. By Lemma \ref{lemma: exp conv along a ray} we know that $u(x,y)$ will exponentially converge to $a_1$($a_3$) as $y$ goes to $\infty$($-\infty$) uniformly for $x\in [0,R_0]$. Therefore,
    \begin{equation}\label{est: finite energy 0<x<R0}
        \int_{\{0<x<R_0\}} \left(\f12|\na u|^2+W(u)\right)\,dz\leq C.
    \end{equation}
    Now \eqref{est: ene on x>0} follows directly from \eqref{est: finite energy 0<x<R0}, \eqref{est: upper bdd on right in SR} and the pointwise estimates \eqref{est: close to a1, new}--\eqref{est: grad, new}.     
\end{proof}

\subsection{Approximation by the heteroclinic connection}

Lemma \ref{lemma: diffuse interface R1/2} implies that the diffuse interface $\G_\d$ is confined within an $O(R^{\f12})$ neighborhood of the limiting interface. We now argue that $O(R^{\f12})$ can be improved to $O(1)$ and conclude the proof of Theorem \ref{thm: rigidity triple junction}.

Define the following notations: 

\begin{itemize}
\itemsep1em
    \item Denote by $\|v\|_s$ the $H^s(\BR)$ norm of a function $v: \BR\ri \BR^2$ for $s\geq 0$, In particular, $\|v\|_0$ denotes the $L^2$ norm.
    \item $\mathcal{U}:=\{U(\cdot-m): m\in \BR\}$ is the set of translations of the connection $U=U_{31}$. 
    \item $\mathcal{S}:=\{v\in H^1_{loc}(\BR,\BR^2): \int_{\BR}\left(\f12|v'|^2+W(v)\right)\,dx<\infty, \lim\limits_{x\ri-\infty} v(x)=a_3,\  \lim\limits_{x\ri\infty} v(x)=a_1\}$.
    \item $J(v):=\int_{\BR}\left(\f12|v'|^2+W(v)\right)\,dx$, $\forall v\in \mathcal{S}$.
    \item $d_s(v,\mathcal{U}):=\inf\{\|v-w\|_s,\ w\in \mathcal{U}\}$. 
\end{itemize}

We rely on the following results of \cite{schatzman2002asymmetric} saying that if the one dimensional energy of $v(x)$ is close to $\sigma$, then $v(x)$ can be well approximated by a translation of $U$.
\begin{proposition}\label{prop: 1D result from Schatzman}
Suppose (H1), (H2) hold. There exist positive constants $\e$ and $\alpha$ such that the following hold:

\begin{enumerate}
\itemsep1em
    \item (\cite[Lemma 2.1]{schatzman2002asymmetric}) For $s=0,1$, if $d_s(v,\mathcal{U})\leq \e$, then there is a unique $h_s(v)$ such that 
    \begin{equation*}
        d_s(v,\mathcal{U})=\|v-U(\cdot- h_s(v))\|_s.
    \end{equation*}
    Moreover, $h_s$ is a function of class $C^{3-s}$ of $v$ for $s=0,1$.
    \item (\cite[Lemma 4.5]{schatzman2002asymmetric}) If $d_1(v,\mathcal{U})\leq \e$, then
    \begin{equation}\label{ineq: energy controls H1 dist}
        J(v)-\sigma \geq \alpha \|v-U(\cdot-h_0(v))\|_1^2 \geq  \alpha  d_1(v,\mathcal{U})^2.
        \end{equation}
    \item (\cite[Corollary 4.6]{schatzman2002asymmetric}) If $d_1(v,\mathcal{U})> \e$, then $J(v)> \al\e^2$. 
    \end{enumerate}
\end{proposition}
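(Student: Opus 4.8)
The three statements are quoted verbatim from \cite{schatzman2002asymmetric} (Lemma~2.1, Lemma~4.5, Corollary~4.6), so the plan is to recall the mechanism behind each; all three are local statements near the translation manifold $\mathcal U$, powered by the non-degeneracy in (H2): $0$ is a simple eigenvalue of $\mathcal T$ with eigenfunction $U'$, and $\mathcal T\ge 0$ since $U$ minimizes $J$. For (1), I would fix a reference translate $U(\cdot-m_0)$ with $\|v-U(\cdot-m_0)\|_s\le\e$ and look for critical points of $g(m):=\|v-U(\cdot-m)\|_s^2$. One computes $g'(m)=2\langle v-U(\cdot-m),U'(\cdot-m)\rangle_s$ and $g''(m_0)\ge 2\|U'\|_s^2-C\e>0$, so the implicit function theorem applied to $g'(m)=0$ in $H^s\times\BR$ produces a unique zero $m=h_s(v)$ near $m_0$, depending on $v$ with the asserted regularity — $h_0$ is one degree smoother than $h_1$ because the defining equation for $h_1$ carries one extra derivative of $U$. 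One then checks that this local critical point is the global minimizer over all translations, since $\|v-U(\cdot-m)\|_s$ grows once $m$ leaves a bounded window (its limit as $m\to\pm\infty$ stays bounded below), so the infimum is attained there.

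The substance is in (2). Set $h=h_0(v)$, $U_h=U(\cdot-h)$, $\phi=v-U_h$; the $L^2$ optimality of $h_0$ gives $\langle\phi,U_h'\rangle_{L^2}=0$, so $\phi$ is orthogonal to the kernel of the Euler--Lagrange operator $\mathcal T_h$ linearized at $U_h$, which is spanned by $U_h'$. Since $U_h$ solves the Euler--Lagrange equation and $J(U_h)=\sigma$, expanding $J$ to second order — the remainder controlled via $\|\phi\|_\infty\lesssim\|\phi\|_1$ from the $1$D Sobolev embedding together with $W\in C^2$ — kills the linear term and leaves $J(v)-\sigma=\tfrac12 Q_h(\phi)+o(\|\phi\|_1^2)$, where $Q_h(\phi)=\int(|\phi'|^2+\phi^TW_{uu}(U_h)\phi)$. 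The essential spectrum of $\mathcal T$ lies in $[c_1,\infty)$ because $W_{uu}(U)\to W_{uu}(a_i)$ whose eigenvalues exceed $c_1$, and below $c_1$ there is only the simple eigenvalue $0$; hence $Q_h(\phi)\ge\lambda_1\|\phi\|_0^2$ on $(U_h')^{\perp}$ for some $\lambda_1>0$, and taking a convex combination with $Q_h(\phi)\ge\|\phi'\|_0^2-C\|\phi\|_0^2$ upgrades this to $Q_h(\phi)\ge c\|\phi\|_1^2$. Finally $\|\phi\|_1\le C\,d_1(v,\mathcal U)$ is small: comparing $h_0$ with the $H^1$-optimal $h_1$ one has $|h_0-h_1|\lesssim\|U_{h_0}-U_{h_1}\|_0\le 2d_1(v,\mathcal U)$ and then $\|U_{h_0}-U_{h_1}\|_1\lesssim|h_0-h_1|$, so the cubic error is absorbed and $J(v)-\sigma\ge\alpha\|\phi\|_1^2\ge\alpha\,d_1(v,\mathcal U)^2$, the last step because $\|\phi\|_1\ge d_1(v,\mathcal U)$ by definition of the infimum. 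I expect this coercivity — getting the full $H^1$ gap rather than merely the $L^2$ gap, while keeping $\|\phi\|_1$ of order $d_1(v,\mathcal U)$ so the higher-order term is negligible — to be the main obstacle.

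For (3), note first that $J(v)\ge\sigma$ for every $v\in\mathcal S$, so the bound $J(v)>\alpha\e^2$ is automatic once $\alpha$ is chosen with $\alpha\e^2<\sigma$ (and the same $\alpha$ then serves in (2)). The informative version, $J(v)-\sigma\ge\mu(\e)>0$ whenever $d_1(v,\mathcal U)\ge\e$, follows by a rigidity argument: were it false there would be $v_n\in\mathcal S$ with $d_1(v_n,\mathcal U)\ge\e$ and $J(v_n)\to\sigma$, hence a minimizing sequence; after translating, the usual dichotomy analysis rules out vanishing and rules out splitting (splitting into connections $a_3\to a_k$ and $a_k\to a_1$ would cost $\sigma_{3k}+\sigma_{k1}=2\sigma>\sigma$ by \eqref{all equal sigma}), so $v_n$ converges in $H^1$, up to translation, to a minimizer $v_*\in\mathcal U$, giving $d_1(v_n,\mathcal U)\le\|v_n-v_*\|_1\to0$, a contradiction.
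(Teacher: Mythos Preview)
The paper does not prove this proposition at all: it simply quotes the three items as Lemma~2.1, Lemma~4.5, and Corollary~4.6 of \cite{schatzman2002asymmetric} and moves on. Your sketch is a faithful outline of the arguments one finds there --- implicit function theorem for (1), second-order expansion of $J$ at $U_h$ combined with the spectral gap of $\mathcal T$ on $(U_h')^\perp$ for (2), and compactness/concentration for the nontrivial form of (3) --- so there is nothing to compare.

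One remark worth making: you are right that (3) as literally stated is vacuous, since $J(v)\ge\sigma$ for every $v\in\mathcal S$ and one may always shrink $\alpha$ so that $\alpha\e^2<\sigma$. What is actually used downstream (to bound $|\mathcal B|$ via Corollary~\ref{corol: ene in 0<x<R}) is precisely the ``informative version'' you prove, namely $J(v)-\sigma\ge\mu(\e)>0$ whenever $d_1(v,\mathcal U)>\e$; indeed the paper's derivation of \eqref{est: mres B} needs $\int_0^R(J(u(x,\cdot))-\sigma)\,dx\le C$ together with a uniform positive lower bound for $J-\sigma$ on $\mathcal B$, not merely for $J$. So your reading of (3) is the correct one for the application.
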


In the rest of this paper we fix the constants $\e$ and $\al$, as defined in Proposition \ref{prop: 1D result from Schatzman}. Moreover, we may assume $\e\ll 1$, since $\alpha$ in \eqref{ineq: energy controls H1 dist} is independent of $d_1(v,\mathcal{U})$.  Define the set
\begin{equation}\label{def: large ene set}
    \mathcal{B}:= \{x>0: d_1(u(x,\cdot))\geq \e\}. 
\end{equation}
From Corollary \ref{corol: ene in 0<x<R} and \eqref{ineq: energy controls H1 dist}, it follows that 
\begin{equation}\label{est: mres B}
    |\mathcal{B}|\leq \f{C}{\al\e^2}=C(W,u).
\end{equation}
Proposition \ref{prop: 1D result from Schatzman} implies that for any $x\in \BR^+\setminus \mathcal{B}$, i.e., outside  a set of finite measure, the slicing function $u(x,\cdot)$ can be approximated closely in $\|\cdot\|_s$ norm by a translated connnection $U(\cdot-h_s(x))$, where $h_s(x)$ is a $C^2$ function of $x$ on $\BR^+\setminus \mathcal{B}$, for $s=0$ or $1$. 

We further recall the following identities for the 2D Allen-Cahn system, which will play a crucial role in our analysis.

\begin{proposition}[Lemma 8.2 in \cite{schatzman2002asymmetric}]\label{prop: hamil id}
    The following identities hold for $u$:
    \begin{align}
        \label{id: hamil 1}& \int_{-\infty}^{\infty} \left[\f12(|\pa_y u|^2-|\pa_x u|^2)+W(u(x,y))\right]\,dy\equiv\sigma, \ \ \forall x\in \BR^+,\\
        \label{id: hamil 2} &\qquad \int_{-\infty}^{\infty} (\pa_x u \pa_y u)\,dy\equiv 0, \ \ \forall x\in \BR^+.
    \end{align}
\end{proposition}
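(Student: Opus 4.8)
The strategy I would follow is the classical stress--energy (Hamiltonian) identity argument. I would set
\[
P(x):=\int_{-\infty}^{\infty}\!\left[\f12|\pa_y u|^2-\f12|\pa_x u|^2+W(u)\right]dy,\qquad
Q(x):=\int_{-\infty}^{\infty}\!\pa_x u\cdot\pa_y u\,dy,
\]
and show that each is a $C^1$ function of $x\in\BR^+$ with identically vanishing derivative, hence constant, and then identify the two constants by letting $x\to\infty$ along a suitable sequence. First I would note that both integrals converge for every $x>0$: the slice energy $J(u(x,\cdot))$ is finite (exactly as used in the proof of Corollary~\ref{corol: ene in 0<x<R}, via Lemma~\ref{lemma: exp conv along a ray} and \eqref{est: close to a1, new}--\eqref{est: grad, new}) and $\|\pa_x u(x,\cdot)\|_0^2<\infty$ by \eqref{ene bdd: px u on x+}, so in particular $P(x)=J(u(x,\cdot))-\f12\|\pa_x u(x,\cdot)\|_0^2$. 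Since $\|u\|_{C^{2,\alpha}(\BR^2)}\le M$ by \eqref{reg of u} and, by \eqref{est: close to a1, new}--\eqref{est: grad, new}, $u$ and $\na u$ decay exponentially in $|y|$ uniformly for $x$ in compact subsets of $\BR^+$, the integrands and their $x$-derivatives have a common integrable dominant near each $x_0>0$; this legitimizes differentiating under the integral sign and forces all boundary contributions at $y=\pm\infty$ to vanish in the computations below.

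Next I would compute the derivatives. Differentiating $P$ in $x$, integrating the $\pa_y u\cdot\pa_y\pa_x u$ term by parts in $y$, and invoking the equation $\Delta u=W_u(u)$ from \eqref{eq: AC system}, one obtains
\[
P'(x)=\int_{-\infty}^{\infty}\pa_x u\cdot\big(-\Delta u+W_u(u)\big)\,dy=0 .
\]
For $Q$, writing $\pa_{xx}u=\Delta u-\pa_{yy}u=W_u(u)-\pa_{yy}u$ and noting that $\int \pa_x u\cdot\pa_y\pa_x u\,dy=\f12\int\pa_y|\pa_x u|^2\,dy=0$, one gets
\[
Q'(x)=\int_{-\infty}^{\infty}\pa_{xx}u\cdot\pa_y u\,dy=\int_{-\infty}^{\infty}\pa_y\!\Big(W(u)-\f12|\pa_y u|^2\Big)\,dy=0 .
\]
Hence $P\equiv c_1$ and $Q\equiv c_2$ for constants $c_1,c_2$ independent of $x$.

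It then remains to show $c_1=\sigma$ and $c_2=0$. For every $x>0$ the slice $u(x,\cdot)$ lies in $\mathcal{S}$, so by minimality of $U$ one has $J(u(x,\cdot))\ge\sigma$. Combining this with the sharp bound \eqref{est: ene on x>0} of Corollary~\ref{corol: ene in 0<x<R} (whose left side equals $\int_0^R\big(J(u(x,\cdot))+\f12\|\pa_x u(x,\cdot)\|_0^2\big)\,dx$) yields
\[
\int_0^\infty\Big(\big(J(u(x,\cdot))-\sigma\big)+\tfrac12\|\pa_x u(x,\cdot)\|_0^2\Big)\,dx\le C<\infty .
\]
Since the integrand is nonnegative and integrable on $\BR^+$, there is a sequence $x_n\to\infty$ along which $J(u(x_n,\cdot))\to\sigma$ and $\|\pa_x u(x_n,\cdot)\|_0\to0$, so that
\[
c_1=P(x_n)=J(u(x_n,\cdot))-\f12\|\pa_x u(x_n,\cdot)\|_0^2\to\sigma,\qquad
|c_2|=|Q(x_n)|\le\|\pa_x u(x_n,\cdot)\|_0\,\sqrt{2J(u(x_n,\cdot))}\to0,
\]
which is \eqref{id: hamil 1}--\eqref{id: hamil 2}.

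The two points that would require genuine care are the uniform-in-$x$ domination used both to differentiate under the integral and to discard the boundary terms at $y=\pm\infty$ — which is handed to us for free by the exponential decay \eqref{est: close to a1, new}--\eqref{est: grad, new} — and the extraction of the sequence $x_n$ in the last step, which is precisely where the global estimates \eqref{ene bdd: px u on x+} and \eqref{est: ene on x>0} enter; the core differential identities themselves reduce to a short integration by parts using the equation.
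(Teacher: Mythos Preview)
Your proof is correct and follows essentially the same approach as the paper: differentiate $P$ and $Q$ under the integral sign, use the equation $\Delta u=W_u(u)$ together with integration by parts in $y$ (with the exponential decay \eqref{est: close to a1, new}--\eqref{est: grad, new} killing the boundary terms) to get $P'\equiv Q'\equiv0$, and then identify the constants using the global bounds \eqref{ene bdd: px u on x+} and \eqref{est: ene on x>0}. The only cosmetic difference is that the paper pins down $c_1$ by integrating $P(x)\equiv c_1$ over $(0,R)$ and comparing with the two-sided bound $\sigma R-C\le\int_0^R P(x)\,dx+\int_0^R\|\pa_x u\|_0^2\,dx\le\sigma R+C$, whereas you extract a single sequence $x_n\to\infty$ along which $J(u(x_n,\cdot))\to\sigma$ and $\|\pa_x u(x_n,\cdot)\|_0\to0$ and read off both constants at once.
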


\begin{proof}
    The above identities were first derived in \cite{schatzman2002asymmetric}, and later extended to a more general setting in \cite{GUI2008904}. See also \cite[Section 3.4]{afs-book} for a more detailed discussion. Here we present a proof for completeness. 

    Define 
    \begin{equation*}
        G(x):=  \int_{-\infty}^{\infty} \left[\f12(|\pa_y u|^2-|\pa_x u|^2)+W(u(x,y))\right]\,dy,\quad x>0.
    \end{equation*}
    Thanks to the exponential decay estimates \eqref{est: close to a1, new}--\eqref{est: grad, new} as $|y|\ri\infty$, we have 
    \begin{equation*}
        \begin{split}
            \f{dG(x)}{dx}&=\int_{-\infty}^{\infty} \left(\pa_y u\pa^2_{xy}u-\pa_x u\pa^2_{xx} u+W_u(u)\pa_x u\right)\,dy\\
            &=\int_{-\infty}^{\infty} \left( \pa_y u\pa^2_{xy} u+\pa_x u\pa^2_{yy}u \right)\,dy\\
            &=\int_{-\infty}^{\infty} \pa_y(\pa_xu\pa_yu)\,dy=0.
        \end{split}
    \end{equation*}
    Here we have utilized the equation $\Delta u=W_u(u)$ and \eqref{est: grad, new}. Thus $G(x)\equiv C_1$ for all $x>0$. From \eqref{est: ene on x>0}, we deduce
    \begin{equation*}
        \sigma R+C\geq C_1R. 
    \end{equation*}
    Letting $R\ri\infty$ yields $C_1\leq \sigma$. On the other hand, since $u(x,\cdot)$ connects $a_3$ to $a_1$ for every $x$, and $\sigma$ is the minimal energy for such a connection, we also have
    \begin{equation*}
        C_1R\geq \sigma R-\int_{\{x>0\}} |\pa_x u|^2\,dz\geq \sigma R-C,
    \end{equation*}
    Sending $R\ri\infty$ implies $C_1\geq \sigma$. Therefore, $C_1=\sigma$ and identity \eqref{id: hamil 1} is proved.

    For identity \eqref{id: hamil 2}, we define
    \begin{equation*}
        H(x):=\int_{-\infty}^{\infty} (\pa_x u\pa_y u)\,dy,\quad x>0.
    \end{equation*}
    We compute
    \begin{equation*}
        \begin{split}
            \f{dH(x)}{dx}&=\int_{-\infty}^{\infty} (\pa^2_{xx} u\pa_y u+\pa_x u\pa^2_{xy} u)\,dy\\
            &=\int_{-\infty}^\infty (-\pa^2_{yy} u\pa_yu-W_u(u)\pa_y u+\pa_x(u)\pa^2_{xy} u)\,dy\\
            &=\int_{-\infty}^\infty -\pa_y(\f12|\pa_y u|^2-\f12|\pa _x u|^2+W(u))\,dy=0,
        \end{split}
    \end{equation*}
    where we use $|\na u(x,y)|\ri 0$ and $W(u(x,y))\ri 0$ as $|y|\ri\infty$ in the last step. Thus $H(x)\equiv C_2$ for all $x>0$. 

    For any $\tau>0$, \eqref{ene bdd: px u on x+} and \eqref{est: ene on x>0} implies that there is a $x_\tau>0$ such that 
    \begin{equation*}
        \int_{-\infty}^\infty |\pa_x u(x_\tau,y)|^2\,dy<\tau^2,\quad  \int_{-\infty}^\infty |\pa_y u(x_\tau, y)|^2\,dy<2\sigma.
    \end{equation*}
    which further gives
    \begin{equation*}
        |C_2|=|H(x_\tau)|\leq \left(\int_{-\infty}^\infty |\pa_x u|^2\,dy\right)^{\f12}\left(\int_{-\infty}^\infty |\pa_y u|^2\,dy\right)^{\f12}\leq \sqrt{2\sigma}\tau.
    \end{equation*}
    Since $\tau$ can be arbitrarily small, $C_2=0$. This proves \eqref{id: hamil 2}.
\end{proof}

From  Proposition \ref{prop: 1D result from Schatzman}, there exists a function $h_0(x)\in C^2(\BR^+\setminus \mathcal{B},\BR)$ such that 
\begin{equation*}
    d_0(u(x,\cdot),\mathcal{U})=\|u(x,\cdot)-U(\cdot-h_0(x))\|_{0}, \quad x\in \BR^+\setminus \mathcal{B}.
\end{equation*}
For simplicity in the following analysis, we will omit the subscript and denote it as $h(x)$. We have the following identities.

    \begin{equation}
        \label{id: orthogonality}
        \int_{-\infty}^\infty (u(x,y)-U(y-h(x)))\cdot U'(y-h(x)) \,dy=0,\quad  x\in \BR^+\setminus \mathcal{B},
    \end{equation}
    \begin{equation}
        \label{id: formula h'(x)}
        h'(x)=\f{\int_{-\infty}^\infty \pa_xu(x,y)\cdot U'(y-h(x))\,dy}{\int_{-\infty}^\infty\big[ |U'(y-h(x))|^2 + U''(y-h(x))(u(x,y)-U(y-h(x)))  \big]  \,dy}.
    \end{equation}

Identity \eqref{id: orthogonality} follows directly from 
\begin{equation*}
    \f{d}{d\delta} \| u(x,\cdot)-U(\cdot-h(x)+\d) \|^2_{0}=0,\quad \text{at }\d=0.
\end{equation*}
since $U(y-h(x))$ minimizes the $L^2$ distance to $u(x,y)$. And \eqref{id: formula h'(x)} follows from differentiating \eqref{id: orthogonality} with respect to $x$. 

For the numerator of \eqref{id: formula h'(x)}, we use \eqref{id: hamil 2} to deduce
\begin{equation}\label{est: h' numerate 1}
    \begin{split}
        &\left|\int_{-\infty}^\infty \pa_xu(x,y)\cdot U'(y-h(x))\,dy\right|\\
        = & \left|\int_{-\infty}^\infty \pa_xu(x,y)\cdot (U'(y-h(x))-\pa_yu(x,y))\,dy\right|\\
        \leq & \|\pa_x u(x,\cdot)\|_{0}\cdot\|u(x,\cdot)-U(\cdot-h(x))\|_1\\
        \leq & \f12\|\pa_x u(x,\cdot)\|_{0}^2+\f12\|u(x,\cdot)-U(\cdot-h(x))\|_1^2.
    \end{split}
\end{equation}

In particular, for $x\in \BR\setminus \mathcal{B}$, we deduce from \eqref{ineq: energy controls H1 dist} and \eqref{id: hamil 1} that
\begin{equation}\label{est: h' numerate 2}
    \begin{split}
    &\left|\int_{-\infty}^\infty \pa_xu(x,y)\cdot U'(y-h(x))\,dy\right|\\
    \leq &\f12\|\pa_x u(x,\cdot)\|_{0}^2+\f{1}{2\alpha}(J(u(x,\cdot))-\sigma)\\
    = &\f{\alpha+1}{2\alpha}  \|\pa_x u(x,\cdot)\|_{0}^2
    \end{split}
\end{equation}

We establish the following lemma.

\begin{lemma}\label{lemma: d0 converge to 0}
  $\lim\limits_{x\ri\infty} d_0(u(x,\cdot),\mathcal{U})=0$.
\end{lemma}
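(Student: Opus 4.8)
The entire statement will be reduced to the scalar function
\[
  g(x):=\|\pa_x u(x,\cdot)\|_0^2=\int_{\BR}|\pa_x u(x,y)|^2\,dy ,
\]
and the plan is to prove that $g(x)\to 0$ as $x\to\infty$; once this is known, Proposition \ref{prop: 1D result from Schatzman} controls $d_0$ directly.

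First I would collect two elementary facts about $g$. It is finite for each $x>0$ by the exponential decay of $\pa_x u$ in $y$ from \eqref{est: grad, new}, it is continuous in $x$ by \eqref{reg of u}, and by the horizontal deformation bound \eqref{ene bdd: px u on x+} it lies in $L^1(\BR^+)$; consequently $\int_{x-1}^{x+1}g(t)\,dt\to 0$ as $x\to\infty$. Also, since $u(x,\cdot)$ connects $a_3$ to $a_1$ with finite energy we have $u(x,\cdot)\in\mathcal S$, and the Hamiltonian identity \eqref{id: hamil 1} gives $J(u(x,\cdot))=\sigma+\tfrac12 g(x)$ for every $x>0$, so Proposition \ref{prop: 1D result from Schatzman} is applicable to $u(x,\cdot)$.

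The main step — and the one I expect to be the only delicate point — is to upgrade the $L^1$ bound on $g$ to the pointwise statement $g(x)\to 0$. Differentiating \eqref{eq: AC system} in the $x$--variable, the field $v:=\pa_x u$ solves the linear elliptic system $\Delta v=W_{uu}(u)\,v$, whose coefficient matrix is bounded because $u$ is bounded and $W\in C^2$; standard interior estimates applied componentwise give a constant $C=C(W,u)$ with $|v(z_0)|^2\le C\int_{B_1(z_0)}|v|^2\,dz$ for all $z_0\in\BR^2$. Integrating this over $z_0=(x,y)$ with $y\in\BR$ and exchanging the order of integration — each point of the plane belongs to $B_1((x,y))$ only for $y$ in an interval of length at most $2$, and only when its first coordinate lies within distance $1$ of $x$ — one obtains
\[
  g(x)\ \le\ 2C\int_{x-1}^{x+1}g(t)\,dt\ \longrightarrow\ 0 \quad\text{as }x\to\infty .
\]
The subtlety is that the $y$--integral runs over an unbounded set, so one cannot simply invoke uniform continuity of $g$; it is the Fubini bookkeeping above, not the tail decay of $\pa_x u$, that closes the argument.

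Finally I would conclude as follows. Take $x$ large enough that $\tfrac12 g(x)<\alpha\e^2$, so that $J(u(x,\cdot))-\sigma<\alpha\e^2$. Then Proposition \ref{prop: 1D result from Schatzman}(3) forces $d_1(u(x,\cdot),\mathcal U)\le\e$, hence $x\notin\mathcal B$, and part (2) of that proposition applies and yields
\[
  d_0(u(x,\cdot),\mathcal U)^2\ \le\ d_1(u(x,\cdot),\mathcal U)^2\ \le\ \tfrac1\alpha\bigl(J(u(x,\cdot))-\sigma\bigr)\ =\ \tfrac1{2\alpha}\,g(x).
\]
Since $g(x)\to 0$, this gives $\lim_{x\to\infty}d_0(u(x,\cdot),\mathcal U)=0$.
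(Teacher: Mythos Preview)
Your proof is correct, but it proceeds by a genuinely different route than the paper. The paper argues by contradiction: assuming $d_0(u(x_n,\cdot),\mathcal{U})\geq\e_0$ along a sequence, it uses the orthogonality relation \eqref{id: orthogonality} to compute $\frac{d}{dx}d_0(u(x,\cdot),\mathcal{U})^2$ directly, bounds this derivative by $\|\pa_x u(x,\cdot)\|_0^2$ (plus a term supported on $\mathcal B$), and integrates over intervals on which $d_0$ climbs from $\e_0/10$ back up to $\e_0$; the tail of $\int|\pa_x u|^2$ together with $|\mathcal B|<\infty$ then forces a contradiction. In contrast, you never differentiate $d_0$: you go through the scalar $g(x)=\|\pa_x u(x,\cdot)\|_0^2$, use the linear equation for $\pa_x u$ and a local $L^2$--$L^\infty$ estimate to turn $g\in L^1$ into $g(x)\to 0$ pointwise, and then feed this into the Hamiltonian identity \eqref{id: hamil 1} and Proposition~\ref{prop: 1D result from Schatzman}.

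Your approach actually yields more: the pointwise convergence $g(x)\to0$ gives $J(u(x,\cdot))\to\sigma$ and hence $d_1(u(x,\cdot),\mathcal U)\to0$, not just $d_0\to0$, and it shows that $\mathcal B$ is bounded rather than merely of finite measure. The paper's approach, on the other hand, avoids any PDE argument for $\pa_x u$ and stays within the ODE framework (differentiating slice functionals in $x$) that it reuses immediately afterwards to control $h'(x)$. One cosmetic remark: the phrase ``applied componentwise'' is slightly misleading since the system $\Delta v=W_{uu}(u)v$ is coupled; the clean justification of $|v(z_0)|^2\le C\int_{B_1(z_0)}|v|^2$ is that the scalar $|v|^2$ satisfies $\Delta|v|^2\ge -C|v|^2$, to which the local maximum principle for subsolutions applies.
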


\begin{proof}
    We argue by contradiction. Suppose there exist $\e_0>0$ and a sequence $x_n\ri\infty$ such that $d_0(u(x_n,\cdot),\mathcal{U})\geq \e_0$ for each $n$. Without loss of generality, we may assume $\e_0<\e$.  

    When $d_0(u(x, \cdot),\mathcal{U})>\f{\e_0}{10}$, Proposition \ref{prop: 1D result from Schatzman} implies that $J(u(x, \cdot))>\sigma+\al(\f{\e_0}{\alpha})^2$, and therefore the set $\{x: d_0(u(x, \cdot),\mathcal{U})>\f{\e_0}{10}\}$ has finite measure. For a sufficiently large $R$, we can always find $x_R>R$ such that 
    \begin{equation*}
        d_0(u(x_R, \cdot),\mathcal{U})\leq \f{\e_0}{10}. 
    \end{equation*}
    We define 
    \begin{equation*}
    \bar{x}_R:= \inf\{x: x>x_R,\, d_0(u(x_R, \cdot)=\e_0\},
    \end{equation*}
    where the existence of $\bar{x}_R>x_R$ is guaranteed by the fact that $x_n\ri\infty$. For any $x\in (x_R,\bar{x}_R)$, we have
    \begin{equation}\label{est: derivative of d0^2}
    \begin{split}
        &\f{d}{dx} |d_0(u(x, \cdot),\mathcal{U})|^2\\
        =&\f{d}{dx} \int_{-\infty}^\infty |u(x,y)-U(y-h(x))|^2\,dy\\
        =& \int_{-\infty}^\infty \left(\pa_xu(x,y)-U'(y-h(x))\pa_xh(x)\right)\cdot\left(u(x,y)-U(y-h(x))\right)\,dy\\
        =& \int_{-\infty}^\infty \pa_xu(x,y)\cdot\left(u(x,y)-U(y-h(x))\right)\,dy\\
        \leq & \f12\left[\|\pa_x u(x,\cdot)\|_0^2 + |d_0(u(x,\cdot), \mathcal{U})|^2\right]
    \end{split}
    \end{equation}
    where we utilize \eqref{id: orthogonality} from the third line to the fourth line. Moreover, if $x\in \BR\setminus \mathcal{B}$, 
    $$
    d_0(u(x,\cdot),\mathcal{U})^2\leq \f{1}{\al}  \|\pa_x u(x,\cdot)\|_0^2
    $$ thanks to \eqref{ineq: energy controls H1 dist}. Next we compute 
    \begin{equation}\label{est: int from xR to bar xR}
        \begin{split}
            \f{99\e_0^2}{100}= &\int_{x_R}^{\bar{x}_R} \left(\f{d}{dx} |d_0(u(x, \cdot),\mathcal{U})|^2\right)\,dx\\
            \leq &\int_{(x_R,\bar{x}_R)\setminus \mathcal{B}} \f{\al+1}{2\al} \|\pa_x u(x,\cdot)\|_0^2\,dx + \int_{(x_R,\bar{x}_R)\cap \mathcal{B}} \f12\left( \|\pa_x u(x,\cdot)\|_0^2+ d_0(u(x,\cdot), \mathcal{U})^2\right)\,dx \\
            \leq &C \int_{x_R}^{\bar{x}_R} \|\pa_x u\|_0^2\,dx+ \e_0^2|(x_R,\bar{x}_R)\cap \mathcal{B}| 
        \end{split}
    \end{equation}
    Since both $\int_{x>0}|\pa_x u|^2\,dz$ and $|\mathcal{B}|$ are bounded, the last line in \eqref{est: int from xR to bar xR} tends to $0$ as $R\ri\infty$, which yields a contradiction. The proof is complete.
\end{proof}

From Lemma \ref{lemma: d0 converge to 0} and Proposition \ref{prop: 1D result from Schatzman}, there exists $R_0>0$ such that $h(x)$ is a well-defined $C^3$ function on $(R_0,\infty)$ and satisfies \eqref{id: orthogonality}, \eqref{id: formula h'(x)} and $d_0(u(x,\cdot), \mathcal{U})<\e$.

When $x>R_0$, we can estimate the denominator in \eqref{id: formula h'(x)} by
\begin{equation}
    \label{est: h' denominator}
    \begin{split}
        &\int_{-\infty}^\infty\big[ |U'(y-h(x))|^2 + U''(y-h(x))(u(x,y)-U(y-h(x)))\big]\,dy \\
        \geq & \|U'\|_0^2-\e \|U''\|_0\\
        \geq & \f12\|U'\|_0^2\geq C,
    \end{split}
\end{equation}
where we use the fact that $\|U''\|_0$ is uniformly bounded. By taking $\e\ll 1$, we can ensure that $\e \|U''\|_0\leq \f12\|U'\|_0^2$.

Combining \eqref{id: formula h'(x)}, \eqref{est: h' numerate 1}, \eqref{est: h' numerate 2} and \eqref{est: h' denominator}, we obtain
\begin{equation}\label{est: h'}
    |h'(x)|\leq \begin{cases}
    C\|\pa_x u(x,\cdot)\|_0^2, & \quad x\in \BR\setminus \mathcal{B},\\
    C\left[\|\pa_x u(x,\cdot)\|_{0}^2+\|u(x,\cdot)-U(\cdot-h(x))\|_1^2\right], & \quad x\in\mathcal{B}.
    \end{cases}
\end{equation}
where the constant $C$ only depends on $W$.

 We have the following result on the convergence of $h(x)$.

\begin{lemma}\label{lemma: h conv}
    There exists a constant $h_0$ such that $\lim\limits_{x\ri+\infty} h(x)=h_0$.
\end{lemma}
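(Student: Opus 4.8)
The plan is to show that $h'(x) \in L^1(R_0,\infty)$, so that $h(x)$ has a finite limit as $x \to \infty$. Starting from the two-case estimate \eqref{est: h'}, I would integrate over $(R_0, \infty)$. On the good set $\BR^+ \setminus \mathcal{B}$, we have $|h'(x)| \leq C\|\pa_x u(x,\cdot)\|_0^2$, and since $\int_{\{x>0\}}|\pa_x u|^2\,dz \leq C$ by \eqref{ene bdd: px u on x+}, the contribution of the good set is finite:
\begin{equation*}
    \int_{(R_0,\infty)\setminus \mathcal{B}} |h'(x)|\,dx \leq C\int_{(R_0,\infty)\setminus\mathcal{B}} \|\pa_x u(x,\cdot)\|_0^2\,dx \leq C\int_{\BR^2_{x+}}|\pa_x u|^2\,dz \leq C.
\end{equation*}

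The remaining task is to control the bad set contribution $\int_{\mathcal{B}} |h'(x)|\,dx$, where the extra term $\|u(x,\cdot)-U(\cdot-h(x))\|_1^2$ appears. The key point is that on $\mathcal{B} \cap (R_0,\infty)$, since $d_0(u(x,\cdot),\mathcal{U}) < \e$ by our choice of $R_0$ (Lemma \ref{lemma: d0 converge to 0}), the function $h(x)=h_0(x)$ is well-defined and the identities \eqref{id: hamil 1} and \eqref{ineq: energy controls H1 dist} apply whenever $d_1(u(x,\cdot),\mathcal{U}) \leq \e$. In the case $d_1 \leq \e$, using \eqref{ineq: energy controls H1 dist} together with the Hamiltonian identity \eqref{id: hamil 1} — which gives $J(u(x,\cdot)) - \sigma = \|\pa_x u(x,\cdot)\|_0^2$ (this is exactly the computation leading to \eqref{est: h' numerate 2}) — we get $\|u(x,\cdot)-U(\cdot-h(x))\|_1^2 \leq \frac{1}{\alpha}\|\pa_x u(x,\cdot)\|_0^2$, so that $|h'(x)| \leq C\|\pa_x u(x,\cdot)\|_0^2$ on this portion as well. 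Since $|\mathcal{B}|<\infty$ by \eqref{est: mres B}, integrating over $\mathcal{B}\cap(R_0,\infty)$ still yields a finite bound controlled by $\int_{\{x>0\}}|\pa_x u|^2\,dz$. (If $\mathcal{B}$ is actually defined via $d_1 \geq \e$, then strictly $\mathcal{B}\cap(R_0,\infty)$ could contain points with $d_1 > \e$; but for $x > R_0$ we have $d_0 < \e$, and I would verify via Proposition \ref{prop: 1D result from Schatzman} and continuity that the set where $d_1 > \e$ while $d_0 < \e$ still has finite measure, or simply enlarge $R_0$ so that $d_1(u(x,\cdot),\mathcal{U}) < \e$ for all $x > R_0$, using Lemma \ref{lemma: d0 converge to 0} plus the fact that $d_1 - d_0$ is controlled once $d_0$ is small — this is the only slightly delicate bookkeeping point.)

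Combining both pieces, $\int_{R_0}^\infty |h'(x)|\,dx < \infty$, so $h(x) = h(R_0 + 1) + \int_{R_0+1}^x h'(t)\,dt$ converges to a finite limit $h_0 := h(R_0+1) + \int_{R_0+1}^\infty h'(t)\,dt$ as $x \to \infty$. The main obstacle I anticipate is the bookkeeping around the bad set $\mathcal{B}$: one must ensure that on $\mathcal{B} \cap (R_0,\infty)$ the quantity $\|u(x,\cdot)-U(\cdot-h(x))\|_1^2$ is genuinely dominated by $\|\pa_x u(x,\cdot)\|_0^2$ via the sharp identity $J(u(x,\cdot))-\sigma = \|\pa_x u(x,\cdot)\|_0^2$, rather than by the weaker bound $J(u(x,\cdot)) > \alpha\e^2$ that would only give $|h'(x)| \leq C$ on a set of finite measure — but both routes in fact suffice for integrability, so the argument is robust. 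Everything else is a direct application of \eqref{est: h'}, \eqref{ene bdd: px u on x+} and \eqref{est: mres B}.
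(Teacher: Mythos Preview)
Your overall strategy --- show $\int_{R_0}^\infty |h'(x)|\,dx<\infty$ via \eqref{est: h'}, \eqref{ene bdd: px u on x+} and \eqref{est: mres B} --- is exactly the paper's, and your treatment of the good set $(R_0,\infty)\setminus\mathcal{B}$ is correct.

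The gap is in your handling of $\mathcal{B}\cap(R_0,\infty)$. By definition \eqref{def: large ene set} one has $d_1(u(x,\cdot),\mathcal{U})\geq\e$ on $\mathcal{B}$, so \eqref{ineq: energy controls H1 dist} is simply \emph{not available} there; your first route is vacuous on $\mathcal{B}$. Your proposed fix of enlarging $R_0$ until $d_1<\e$ everywhere does not work either: Lemma~\ref{lemma: d0 converge to 0} only gives $d_0\to 0$, and no control of $d_1$ by $d_0$ has been established in the paper. Your second route also fails as written: the \emph{lower} bound $J(u(x,\cdot))>\sigma+\alpha\e^2$ on $\mathcal{B}$ gives no \emph{upper} bound on $\|u(x,\cdot)-U(\cdot-h(x))\|_1^2$, hence none on $|h'(x)|$; the claim ``$|h'(x)|\leq C$ on $\mathcal{B}$'' is unjustified.

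The paper closes the gap more crudely: on $\mathcal{B}\cap(R_0,\infty)$ expand
\[
\|u(x,\cdot)-U(\cdot-h(x))\|_1^2 \leq d_0(u(x,\cdot),\mathcal{U})^2 + 2\|\pa_y u(x,\cdot)\|_0^2 + 2\|U'\|_0^2.
\]
The first term is $<\e^2$ since $x>R_0$, the last is a fixed constant, and $\|\pa_y u(x,\cdot)\|_0^2\leq 2J(u(x,\cdot))=2\sigma+\|\pa_x u(x,\cdot)\|_0^2$ by the Hamiltonian identity \eqref{id: hamil 1}. This yields $|h'(x)|\leq C\bigl(1+\|\pa_x u(x,\cdot)\|_0^2\bigr)$ on $\mathcal{B}\cap(R_0,\infty)$, and integrating over this set of finite measure gives $\int_{\mathcal{B}\cap(R_0,\infty)}|h'|\,dx\leq C|\mathcal{B}|+C\int_{\{x>0\}}|\pa_x u|^2\,dz<\infty$. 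With this correction your argument is complete and coincides with the paper's.
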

\begin{proof}
    It suffices to show that 
    \begin{equation*}
    \int_{R_0}^\infty |h'(x)|\,dx<\infty.
    \end{equation*}
    Using \eqref{est: h'}, \eqref{id: hamil 1}, \eqref{ene bdd: px u on x+} and \eqref{est: mres B}, we deduce that 
    \begin{equation*}
        \begin{split}
          &\int_{R_0}^\infty |h'(x)|\,dx\\
          \leq & C\int_{R_0}^\infty \|\pa_x u(x,\cdot)\|_0^2\,dx +\int_{\mathcal{B}} \|u(x,\cdot)-U(\cdot-h(x))\|_1^2\,dx\\
          \leq & C\int_{\{x>R_0\}}|\pa_x u|^2\,dx +C\int_{\mathcal{B}}\left( \|\pa_y u(x,\cdot)\|_0^2+\|U'\|_0^2 +\|u(x,\cdot)-U(\cdot-h(x))\|_0^2   \right)\,dx\\
          \leq & C\left(\int_{\{x>R_0\}}|\pa_x u|^2\,dx+ \int_{\mathcal{B}} J(u(x,\cdot))\,dx+\|U'\|_0^2+\e|\mathcal{B}|\right)\\
          \leq & C\left(\int_{\{x>R_0\}}|\pa_x u|^2\,dx+\int_{\mathcal{B}} |\pa_x u|^2\,dx +\sigma |\mathcal{B}| +\|U'\|_0^2+ \e|\mathcal{B}|\right)\\
          \leq & C(W,u).
        \end{split}
    \end{equation*}
    The proof is complete.
\end{proof}

Lemma \ref{lemma: d0 converge to 0} and Lemma \ref{lemma: h conv} together imply that 
\begin{equation*}
    \lim\limits_{x\ri+\infty}\|u(x,\cdot)-U(y-h_0)\|_0^2=0.
\end{equation*}
Since $u(x,\cdot)-U(y-h_0)$ are uniformly bounded in $C^{2,\alpha}(\BR,\BR^2)$ for any $\al\in(0,1)$ due to \eqref{reg of u}, the $L^2$ convergence above implies convergence in $C^{2,\alpha}(\BR,\BR^2)$, which establishes \eqref{main result: conv} for the $a_1$-$a_3$ interface. The same arguments also hold for the other two sharp interfaces. 

To complete the proof of Theorem \ref{thm: rigidity triple junction}, it remains to prove that the solution $u$ satisfies the pointwise estimate \eqref{main result: decay}. To establish this we first observe that \eqref{main result: conv} implies the diffused interface defined in \eqref{def: diffuse interface} must be contained in an $R_0$-neighborhood of the sharp interface $\pa\mathcal{P}$. Particularly, we have
\beqo
\max\limits_{x\in B(y,\f12\dist(x,\pa\mathcal{P}))}\left(\min\limits_i |u(x)-a_i|\right)< \delta, 
\eeqo
for all $y$ satisfying $\dist(y,\pa\mathcal{P})>2R_0.$ Equation \eqref{main result: decay} then follows from the comparison principle for elliptic equations. This concludes the proof of Theorem \ref{thm: rigidity triple junction}.

\section*{Acknowledgment}

The author would like to thank Nicholas D. Alikakos and Changyou Wang for their stimulating discussions and valuable suggestions. The research of Z. Geng was partially supported by AMS-Simons Travel Grant 25014588.

\bibliographystyle{acm}
\bibliography{vector-de-giorgi}

\end{document}